\theoremstyle{plain}
\newtheorem{theorem}{Theorem}[section]
\newtheorem{lemma}[theorem]{Lemma}
\newtheorem{proposition}[theorem]{Proposition}
\newtheorem*{claim}{Claim}
\theoremstyle{definition}
\newtheorem{example}[theorem]{Example}
\newtheorem{remark}[theorem]{Remark}
\newcommand{\CPb}{\overline{\mathbb{CP}}{}^{2}}
\newcommand{\CP}{{\mathbb{CP}}{}^{2}}
\newcommand{\R}{\mathbb{R}}
\newcommand{\Z}{\mathbb{Z}}
\newcommand{\D}{\displaystyle}
\newcommand{\ol}[1]{\overline{#1}}
\newcommand{\Pa}{\partial}
\newcommand{\smat}[1]{\left(\begin{smallmatrix}
#1
\end{smallmatrix}\right)}
\def\Ker{\operatorname{Ker}}
\def\Diff{\operatorname{Diff}}
\def\id{\operatorname{id}}
\def\Mod{\operatorname{Mod}}
\def\Crit{\operatorname{Crit}}
\def\grad{\operatorname{grad}}
\begin{document}

\title[On diagrams of simplified trisections and mapping class groups]
{On diagrams of simplified trisections and mapping class groups}

\author[K.~Hayano]{Kenta Hayano}
\address{
Department of Mathematics Faculty of Science and Technology, Keio University
Yagami Campus: 3-14-1 Hiyoshi, Kohoku-ku, Yokohama, 223-8522, Japan}
\email{k-hayano@math.keio.ac.jp}

\begin{abstract}

A simplified trisection is a trisection map on a $4$--manifold such that, in its critical value set, there is no double point and cusps only appear in triples on innermost fold circles. 
We give a necessary and sufficient condition for a $3$--tuple of systems of simple closed curves in a surface to be a diagram of a simplified trisection in terms of mapping class groups. 
As an application of this criterion, we show that trisections of spun $4$--manifolds due to Meier are diffeomorphic (as trisections) to simplified ones.
Baykur and Saeki recently gave an algorithmic construction of a simplified trisection from a directed broken Lefschetz fibration. 
We also give an algorithm to obtain a diagram of a simplified trisection derived from their construction. 

\end{abstract}

\maketitle

\section{Introduction} 

A \emph{trisection}, due to Gay and Kirby \cite{GKtrisection}, is a decomposition of a $4$--manifold into three $4$-dimensional handlebodies, which can be considered as a $4$--dimensional counterpart of a Heegaard splitting of a $3$--manifold. 
Indeed, we can obtain a $4$--manifold with a trisection from a $3$--tuple of systems of simple closed curves in a closed surface, which is called a \emph{trisection diagram}, as we can obtain a $3$--manifold with a Heegaard splitting from a Heegaard diagram. 
On the other hand, we can also obtain a trisection of a $4$--manifold from a stable map to the plane with a specific configuration of the critical value set (see Figure~\ref{F:critv_trisection}), which we will call a \emph{trisection map} (or a trisection for simplicity). 

In studying smooth maps from $4$--manifolds to surfaces with stable and Lefschetz singularities, Baykur and Saeki \cite{BaykurSaeki} introduced \emph{simplified trisections}, which are trisection maps such that in their critical value sets, there are no double points and cusps only appear in triples on innermost fold circles (see Figure~\ref{F:critv simplified trisection} for the critical value set of a simplified trisection).  
They further gave an algorithm to obtain simplified trisections from directed broken Lefschetz fibrations, which implies the existence of a simplified trisection for an arbitrary $4$--manifold. 
In this paper, relying on the theory of mapping class groups of surfaces, we study trisection diagrams associated with simplified trisections. 

We first discuss when a $3$--tuple of (ordered) systems of simple closed curves is a diagram associated with a simplified trisection in terms of mapping class groups. 
Since the critical value set of a simplified trisection is nested circles with cusps (see Figure~\ref{F:critv simplified trisection}), we can take a monodromy along a loop between each consecutive pair of components in the critical value set. 
We will simultaneously give an algorithm to determine such a monodromy from curves in a diagram of a simplified trisection, and a necessary and sufficient condition for a given $3$--tuple of systems of curves to be a diagram associated with a simplified trisection (Theorem~\ref{T:descriptionSTviaMCG} and Lemma~\ref{T:mu is monodromy}).
Note that, strictly speaking, these results do not directly explain relation between simplified trisections and their diagrams: the curves $a_i,b_j,c_j$ in Theorem~\ref{T:descriptionSTviaMCG} are not curves in a trisection diagram, but vanishing cycles of indefinite folds of a simplified trisection (see the second paragraph of Section~\ref{S:descriptionSTviaMCG}). 
We will clarify relation between the vanishing cycles in Theorem~\ref{T:descriptionSTviaMCG} and curves in a diagram of a simplified trisection (Proposition~\ref{T:relation vc diagram}). 
As an application of the results, we will show that trisections of spun $4$--manifolds constructed by Meier \cite{Meier} are diffeomorphic to simplified trisections (Theorem~\ref{T:Meier's ex are simplified}). 
We will also classify simplified trisections with genus $2$ using Theorem~\ref{T:descriptionSTviaMCG} (Theorem~\ref{T:classificationgenus2ST}). 
This classification can also be obtained as merely a corollary of the classification of genus--$2$ trisections given in \cite{MZgenus2}, in which the authors relied on deep results on genus--$2$ Heegaard splittings of $S^3$, while we will reduce the classification to linear algebraic problems, which is easy to solve. 

By analyzing how vanishing cycles are changed in algorithmic construction of simplified trisections from broken Lefschetz fibrations given in \cite{BaykurSaeki}, we next give an algorithm to obtain trisection diagrams from vanishing cycles of broken Lefschetz fibrations. 
In the analysis of vanishing cycles, we will be faced with problems concerning how parallel transports are affected by homotopies of stable maps called $\mathrm{R2}$--moves, which change the critical value sets like the Reidemeister move of type II.  
We will solve such problems by making use of the results by the author \cite{HayanoR2}, which completely describe the effect of $\mathrm{R2}$--moves on parallel transports (and thus that on vanishing cycles) in terms of mapping class groups. 
Lastly we will apply the algorithm to genus--$1$ simplified broken Lefschetz fibrations, resulting in diagrams associated with simplified trisections on $S^4$, the connected sum of $S^1\times S^3$ and an $S^2$--bundle over $S^2$, and manifolds $L_n$ and $L_n'$ due to Pao~\cite{Pao} (Example~\ref{Ex:simplified trisection genus-1SBLF}). 
Using the diagram obtained there, we will verify that a simplified $(3,1)$--trisection on $S^4$ obtained from a genus--$1$ simplified broken Lefschetz fibration is diffeomorphic to the stabilization of the $(0,0)$--trisection of $S^4$ (Figure~\ref{F:standard diagram S4} explicitly gives sequences of handle-slides between diagrams corresponding to the two trisections). 

\section{Preliminaries}

Throughout the paper, we will assume that any manifold is smooth, connected, compact and oriented unless otherwise noted. 
We will describe the image of definite (resp.~indefinite) folds by red (resp.~black) curves with co-orientations as shown in Figures~\ref{F:definite fold} and \ref{F:indefinite fold}.
As shown in Figure~\ref{F:Lefschetz}, we will describe the image of a Lefschetz singularity by a cross. 
Note that these conventions are the same as those in \cite{BaykurSaeki}. 
\begin{figure}[htbp]
\centering
\subfigure[Definite folds]{\includegraphics[width=27mm]{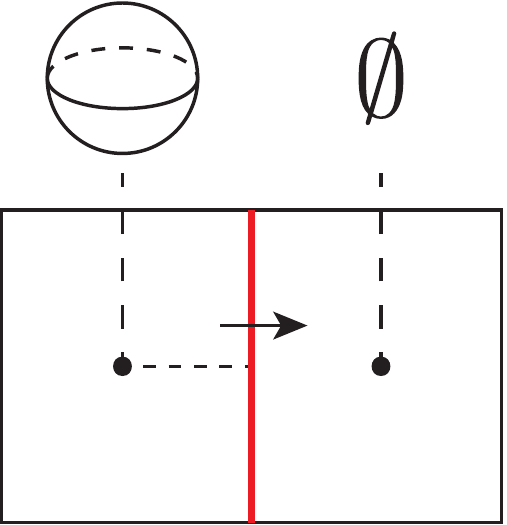}
\label{F:definite fold}}
\subfigure[Indefinite folds]{\includegraphics[width=27mm]{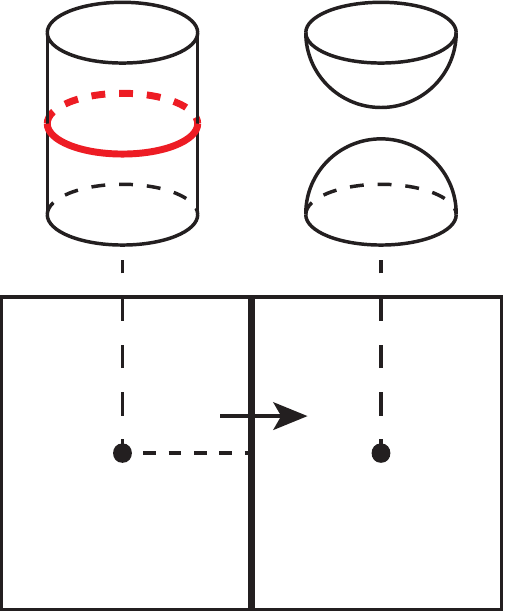}
\label{F:indefinite fold}}
\subfigure[A Lefschetz singularity]{\includegraphics[width=27mm]{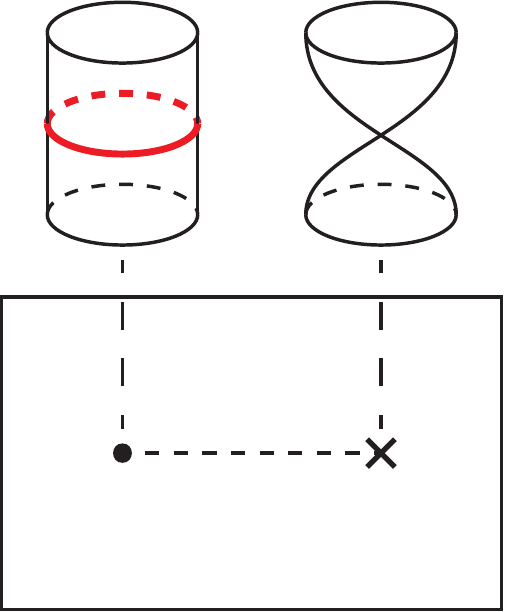}
\label{F:Lefschetz}}
\caption{Fibers around critical values.}
\label{F:critical points}
\end{figure}

\subsection{Mapping class groups and monodromies along indefinite folds}\label{S:monodromy indeffold}

Let $\Sigma$ be a surface, $V_1,\ldots,V_k\subset \Sigma$ discrete subsets, and $c_1,\ldots,c_l$ simple closed curves.  
We denote the set of orientation preserving diffeomorphisms fixing $V_1,\ldots,V_k$ setwise by $\Diff^+(\Sigma;V_1,\ldots,V_k)$, and we put 
\begin{align*}
&\Mod(\Sigma;V_1,\ldots,V_k)(c_1,\ldots,c_l) =\left\{\left.[\varphi]\in\pi_0\left(\Diff^+(\Sigma;V_1,\ldots,V_k)\right)\right|\varphi(c_i)=c_i\hspace{.3em}(i=1,\ldots,l)\right\}. 
\end{align*}
We endow it with a group structure by compositions of representatives. 
Let $\Sigma_{c_i}$ be a surface obtained by applying surgery along $\Sigma$ (i.e.~attaching two disks to $\Sigma\setminus \nu (c_i)$, where $\nu (c_i)\subset \Sigma$ is a tubular neighborhood of $c_i$), and $v_0,v_0'\in \Sigma_{c_i}$ the centers of the disks. 
We define a \emph{(pointed) surgery homomorphism}
\begin{align*}
\Phi_{c_i}^\ast : & \Mod(\Sigma;V_1,\ldots,V_k)(c_1,\ldots,c_l)\to \Mod(\Sigma_{c_i};\{v_0,v_0'\},V_1,\ldots,V_k)(c_1,\ldots,,c_{i-1},c_{i+1},\ldots,c_l)
\end{align*}
as follows.
For an element $\varphi\in \Diff^+(\Sigma;V_1,\ldots,V_k)$, we first modify $\varphi$ by an isotopy so that it preserves $\nu(c_i)$. 
Let $\tilde{\varphi}:\Sigma_{c_i}\to \Sigma_{c_i}$ be an extension of the diffeomorphism $\varphi|_{\Sigma\setminus \nu(c_i)}$ preserving the set $\{v_0,v_0'\}$. 
We then put $\Phi_{c_i}^\ast([\varphi]) = [\tilde{\varphi}]$. 
It is known that the map $\Phi_{c_i}^\ast$ is well-defined and the kernel of $\Phi_{c_i}^\ast$ is generated by the Dehn twist $t_{c_i}$ (for the proof, see \cite[Lemma 3.1]{BaykurHayanoBLF}\footnote{The authors merely dealt with the case $k=0$ and $l=1$ in \cite{BaykurHayanoBLF}, yet the proof there works for general cases.}, for example). 
We put $\Phi_{c_i}=F_{v_0,v_0'}\circ \Phi_{c_i}^\ast$, which is also called a \emph{surgery homomorphism}, where $F_{v_0,v_0'}$ is the forgetting map. 

Let $f:X\to \Sigma$ be a smooth map on a $4$--manifold to a surface and $\mathcal{S}\subset \Crit(f)$ be a circle consisting of indefinite folds. 
Assume that the restriction $f|_{\mathcal{S}}$ is embedding and the complement $\nu(f(\mathcal{S}))\setminus f(\mathcal{S})$, where $\nu(f(\mathcal{S}))$ is a tubular neighborhood of $f(\mathcal{S})$, does not contain any critical values of $f$. 
The complement $\nu(f(\mathcal{S}))\setminus f(\mathcal{S})$ has two components. 
We take two points $p_0, q_0$ from each of the components. 
Let $\alpha$ and $\beta$ be loops in $\nu(f(\mathcal{S}))\setminus f(\mathcal{S})$ based at $p_0$ and $q_0$, respectively, such that these curves represent the same element in $H_1(\nu(f(\mathcal{S}));\Z)$. 
We take an oriented simple path $\gamma\subset\nu(f(\mathcal{S}))$ from $p_0$ to $q_0$ intersecting with $f(\mathcal{S})$ on one point transversely. 
Assume that the orientation of $\gamma$ coincides with the co-orientation of $f(\mathcal{S})$ at the intersection. 
The arc $\gamma$ gives rise to a vanishing cycle $c\subset \Sigma=f^{-1}(p_0)$ and an identification of $f^{-1}(q_0)$ with $\Sigma_c$. 
It is known that the monodromy $\mu \in \Mod(\Sigma)$ along $\alpha$ is contained in $\Mod(\Sigma)(c)$ and, under the identification above, the monodromy along $\beta$ is equal to $\Phi_c(\mu)$ (see \cite{Baykur_PJM,BaykurHayanoBLF}).

\subsection{Trisections of $4$--manifolds}\label{S:review trisection}

Let $X$ be a closed $4$--manifold. 
A decomposition $X=X_1\cup X_2\cup X_3$ is called a \emph{$(g,k)$--trisection} if

\begin{itemize}

\item 
for each $i=1,2,3$, there is a diffeomorphism $\phi_i:X_i \to \natural^k(S^1\times D^3)$, and

\item 
for each $i=1,2,3$, taking indices mod $3$, $\phi_i(X_i\cap X_{i+1})=Y_{k,g}^-$ and $\phi_i(X_i\cap X_{i-1})=Y_{k,g}^+$, where $\Pa \left(\natural^k (S^1\times D^3)\right) = \sharp^k(S^1\times S^2) = Y_{k,g}^+\cup Y_{k,g}^-$ is a Heegaard splitting of $\sharp^k(S^1\times S^2)$ obtained by stabilizing the standard genus--$k$ Heegaard splitting $g-k$ times.

\end{itemize}

\noindent
A stable map $f:X \to \R^2$ is called a \emph{$(g,k)$--trisection map} if its critical value set is as shown in Figure~\ref{F:critv_trisection}, where each of the three white boxes, called a \emph{Cerf box} of $f$, consists of a Cerf graphic without cusps and definite folds (i.e.~it consists of indefinite fold images with transverse double points but without ``radial tangencies''). 
A trisection map $f$ is said to be \emph{simplified} if its Cerf boxes do not contain double points. 
Note that, by the definition of a simplified trisection, cusps of it only appear in triples on innermost fold circles (see Figure~\ref{F:critv simplified trisection}).
\begin{figure}[htbp]
\includegraphics[width=65mm]{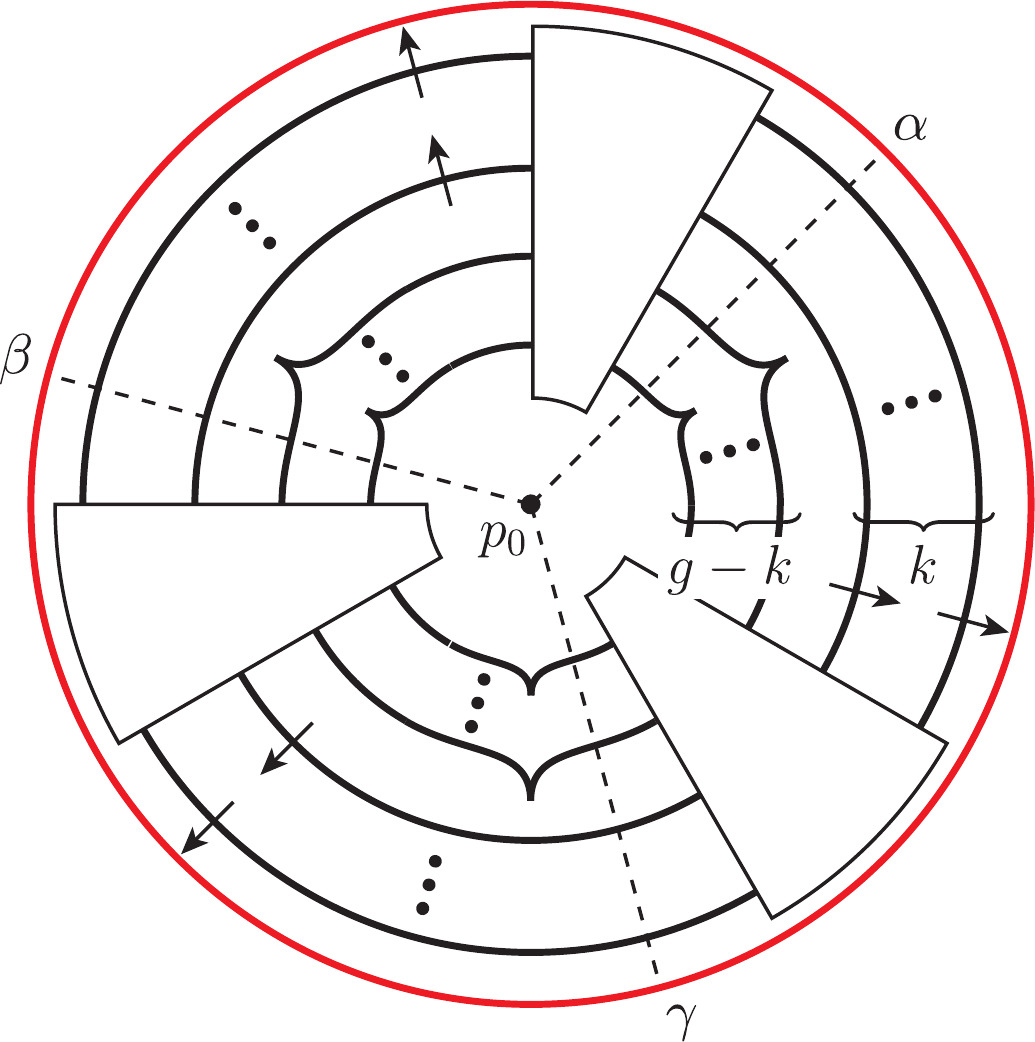}
\caption{The critical value set of a trisection map.}
\label{F:critv_trisection}
\end{figure}
Let $\alpha,\beta, \gamma$ be $g$--tuples of simple closed curves in $\Sigma_g$. 
A tuple $(\Sigma_g;\alpha,\beta,\gamma)$ is called a \emph{$(g,k)$--trisection diagram} if each of the tuples $(\Sigma_g;\alpha,\beta), (\Sigma_g;\beta,\gamma)$ and $(\Sigma_g;\gamma, \alpha)$ is a genus--$g$ Heegaard diagram of $\sharp^k(S^1\times S^2)$. 
In this paper we will mainly focus on trisection maps and diagrams. 
For this reason, we will sometimes call trisection maps just trisections for simplicity. 

From a $(g,k)$--trisection map $f:X\to \R^2$, we can obtain a trisection of $X$ and a trisection diagram as follows. 
The three dotted segments in Figure~\ref{F:critv_trisection} decomposes the image of $f$ into three regions $D_1, D_2$ and $D_3$.
The decomposition $X=X_1\cup X_2 \cup X_3$ is a $(g,k)$--trisection of $X$, where $X_i=f^{-1}(D_i)$. 
Furthermore, by taking vanishing cycles of $f$ with respect to the three dotted segments in Figure~\ref{F:critv_trisection}, we can obtain a three $g$--tuples $\alpha=(\alpha_1,\ldots,\alpha_g), \beta=(\beta_1,\ldots,\beta_g), \gamma=(\gamma_1,\ldots,\gamma_g)$ of simple closed curves in $f^{-1}(p_0)\cong \Sigma_g$. 
The tuple $(\Sigma_g;\alpha,\beta,\gamma)$ is a $(g,k)$--trisection diagram. 

Conversely, for a $(g,k)$--trisection diagram $(\Sigma_g;\alpha,\beta,\gamma)$ (or a $(g,k)$--trisection of $X$), there exists a $(g,k)$--trisection map $f:X\to \R^2$ such that the corresponding diagram obtained in the procedure above is $(\Sigma_g;\alpha,\beta,\gamma)$. 
Note that the diffeomorphism type of $X$ is uniquely determined from the diagram $(\Sigma_g;\alpha,\beta,\gamma)$, while a trisection map $f$ is not uniquely determined (there are many choices of Cerf boxes, for example). 

Two trisections $X=X_1\cup X_2\cup X_3$ and $X'=X_1'\cup X_2'\cup X_3'$ are said to be \emph{diffeomorphic} if there exists an orientation preserving diffeomorphism $\Phi:X\to X'$ sending $X_i$ to $X_i'$. 
It is known that two trisections of the same manifold become diffeomorphic after stabilizing each trisection several times (for the definition of stabilization and the proof of this statement, see \cite{GKtrisection}).
Since any two handle decompositions of a genus--$g$ handlebody with one $0$--handle and $g$ $1$--handles are related by handle-slides, two trisections are diffeomorphic if and only if the corresponding diagrams are related by orientation preserving self-diffeomorphisms of $\Sigma_g$ and handle-slides, which are slides of $\alpha$-- (resp.~$\beta$-- and $\gamma$--) curves over $\alpha$-- (resp.~$\beta$-- and $\gamma$--) curves.

\section{Description of simplified trisections via mapping class groups}\label{S:descriptionSTviaMCG}

As we briefly reviewed in Section~\ref{S:review trisection}, three $g$--tuples $\alpha,\beta,\gamma$ of simple closed curves in $\Sigma_g$ is a $(g,k)$--trisection diagram if and only if each two of the tuples is a genus--$g$ Heegaard diagram of $\sharp^k(S^1\times S^2)$. 
In this section, after observing relation between diagrams and vanishing cycles of simplified trisections (Proposition~\ref{T:relation vc diagram}), we will give a necessary and sufficient condition for systems of simple closed curves to be vanishing cycles of a \emph{simplified} $(g,k)$--trisection in terms of mapping class groups. 
Making use of this condition, we will then show that trisections of spun $4$--manifolds constructed by Meier \cite{Meier} are diffeomorphic to simplified ones (Theorem~\ref{T:Meier's ex are simplified}). 
We will also give an elementary proof (without relying on the more general result in \cite{MZgenus2}) of the classification of simplified trisections with genus $2$ (Theorem~\ref{T:classificationgenus2ST}). 

Let $f:X\to \R^2$ be a simplified $(g,k)$--trisection. 
We take reference paths in $\R^2$ as shown by dotted segments in Figure~\ref{F:critv simplified trisection}. 
We denote the fiber on the center by $\Sigma$. 
Let $a_i,b_i,c_i\subset \Sigma$ be vanishing cycles of the $i$--th innermost indefinite fold circle associated with the reference paths in Figure~\ref{F:critv simplified trisection}. 
In what follows, the vanishing cycles are assumed to be in general position. 
Note that $a_i, b_i$ and $c_i$ are unique up to isotopies and handle-slides over $a_1,\ldots,a_{i-1}$. 
Following the observation in \cite[Remark 7.2]{BaykurSaeki}, we call such handle-slides \emph{upper-triangular handle-slides}. 
\begin{figure}[htbp]
	\includegraphics[width=80mm]{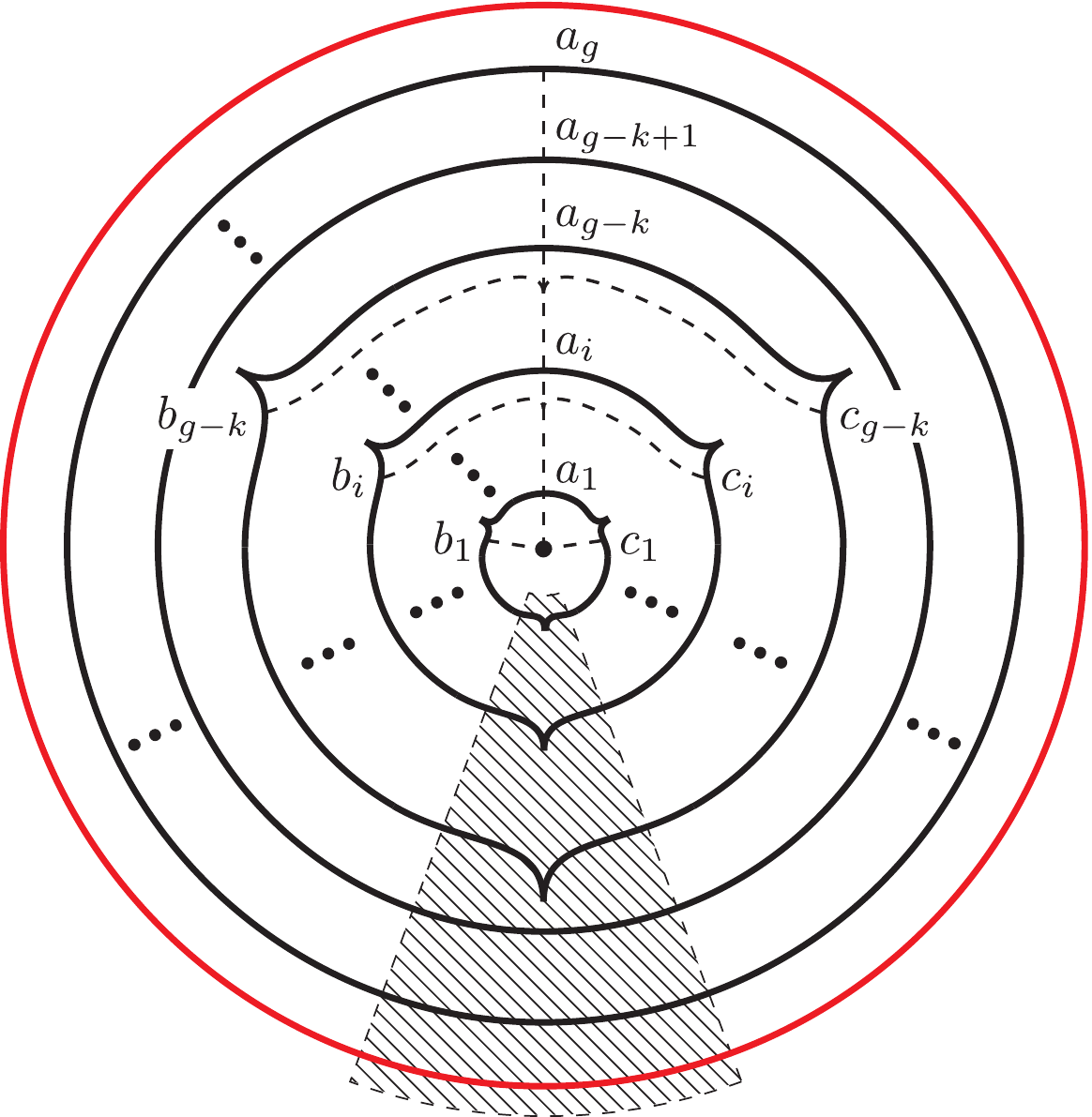}
	\caption{Reference paths for a simplified $(g,k)$--trisection.}
	\label{F:critv simplified trisection}
\end{figure}

\begin{proposition}\label{T:relation vc diagram}

We can obtain a trisection diagram $(\Sigma;\alpha,\beta,\gamma)$ associated with $f$ by taking $\alpha_i,\beta_i,\gamma_i$ as follows: 

\begin{itemize}
	
	\item 
	for each $i=1,\ldots,g$, $\alpha_i = a_i$, 
	
	\item 
	for each $j=1,\ldots,g-k$, $\beta_j$ (resp.~$\gamma_j$) is a curve obtained by applying upper-triangular handle-slides to $b_j$ (resp.~$c_j$) so that the resulting curve is disjoint from the curves $b_1,\ldots,b_{j-1}$ (resp.~$c_1,\ldots,c_{j-1}$), 
	
	\item 
	for each $l=g-k+1,\ldots,g$, $\beta_l$ (resp.~$\gamma_l$) is a curve obtained by applying upper-triangular handle-slides to $a_l$ so that the resulting curve is disjoint from $b_1,\ldots,b_{l-1}$ (resp.~$c_1,\ldots,c_{l-1}$), 
	
\end{itemize}
Conversely, let $(\Sigma;\alpha',\beta',\gamma')$ be a trisection diagram associated with a simplified $(g,k)$--trisection $f'$ and $a_i',b_j',c_j'$ ($i=1,\ldots,g$, $j=1,\ldots,g-k$) vanishing cycles associated with the reference paths in Figure~\ref{F:critv simplified trisection}. 
Then the followings hold up to upper-triangular handle-slides: 
\begin{itemize}
	
	\item 
	for each $i=1,\ldots,g$, $a_i' = \alpha_i'$, 
	
	\item 
	for each $j=1,\ldots,g-k$, $b_j'$ (resp.~$c_j'$) is equal to a curve obtained by applying upper-triangular handle-slides to $\beta_j'$ (resp.~$\gamma_j'$) so that the resulting curve is disjoint from the curves $\beta_1',\ldots,\beta_{j-1}'$ (resp.~$\gamma_1',\ldots,\gamma_{j-1}'$),

	\item
	for each $l=g-k+1,\ldots,g$, $\alpha_l' = \beta_l'=\gamma_l'$.  
\end{itemize}
\end{proposition}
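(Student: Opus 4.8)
The plan is to compare, fold by fold, the two systems of vanishing cycles: those obtained by transporting along the radial reference paths of Figure~\ref{F:critv simplified trisection} (the curves $a_i,b_j,c_j$) and those obtained by the standard trisection-diagram procedure of Section~\ref{S:review trisection}, using the three dotted reference segments that cut the image of $f$ into the regions $D_1,D_2,D_3$. The whole argument rests on the parallel-transport description recalled in Section~\ref{S:monodromy indeffold}: crossing an indefinite fold circle with vanishing cycle $c$ replaces the ambient monodromy $\mu$ by $\Phi_c(\mu)$, and $\ker\Phi_c$ is generated by the Dehn twist $t_c$. I would begin by fixing the identification of $\Sigma$ with the central fiber $f^{-1}(p_0)$ and observing that the $\alpha$-curves of the trisection diagram are, by construction, exactly the vanishing cycles collected along the reference segment into $D_1$, which coincides with the radial $a$-path; hence $\alpha_i=a_i$ for every $i$ with no correction needed, disposing of the first bullet and its converse at once.

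The heart of the argument is the second bullet. Here the reference segment defining $\beta_j$ in the trisection diagram does not coincide with the radial $b$-path but differs from it by a homotopy that sweeps across the inner fold circles indexed $1,\ldots,j-1$, namely those sitting between the $j$-th innermost circle and the center. I would push the $\beta$-reference path onto the $b$-path by this homotopy and track the effect on the transported vanishing cycle through the surgery homomorphisms $\Phi_{a_1},\ldots,\Phi_{a_{j-1}}$. Since each $\ker\Phi_{a_i}$ is generated by $t_{a_i}$, every elementary step contributes a power of a Dehn twist along one of $a_1,\ldots,a_{j-1}$, and a twist $t_{a_i}^{\pm1}$ applied to $b_j$ is realized geometrically by handle-slides of $b_j$ over $a_i=\alpha_i$. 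As only the folds interior to the $j$-th one ever enter, these are precisely upper-triangular handle-slides; running the count inductively in $j$, I would arrange them so that the resulting curve is disjoint from $b_1,\ldots,b_{j-1}$, which is exactly the normalization needed for $\beta=(\beta_1,\ldots,\beta_g)$ to be a cut system. The computation for $\gamma_j$ is identical with the $c$-path in place of the $b$-path.

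For the last $k$ indices $l=g-k+1,\ldots,g$ I would use that the corresponding fold circles carry no cusps, so the monodromy around each is a single Dehn twist along its vanishing cycle and the three radial paths return the same curve; thus $a_l=b_l=c_l$, which matches the standard trisection fact that the final $k$ curves of each system agree and yields $\alpha_l=\beta_l=\gamma_l$ up to upper-triangular slides, giving the third bullet together with the last bullet of the converse. The converse as a whole is then obtained by reading the same parallel-transport computation backwards: starting from a diagram $(\Sigma;\alpha',\beta',\gamma')$ of $f'$ together with the vanishing cycles $a_i',b_j',c_j'$, the identical surgery-homomorphism bookkeeping recovers $a_i'=\alpha_i'$ and expresses $b_j',c_j'$ as upper-triangular handle-slides of $\beta_j',\gamma_j'$.

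The main obstacle will be the precise bookkeeping in the second step. I must show that the homotopy relating the two reference paths crosses only the interior folds $1,\ldots,j-1$, so that no slide over any of $a_j,\ldots,a_g$ is ever forced and upper-triangularity is genuinely preserved; and I must control the transport as the path is pushed past the triples of cusps on the innermost circles, where the naive parallel transport is ambiguous and has to be pinned down using the co-orientation and path conventions of Section~\ref{S:monodromy indeffold}.
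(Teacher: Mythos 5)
Your overall architecture (comparing the two reference-path systems crossing by crossing, with upper-triangular handle-slides absorbing the lifting ambiguity) matches the spirit of the paper, but the central step of your second paragraph contains a genuine error. You claim that when the $\beta$-reference path is swept onto the $b$-path, ``every elementary step contributes a power of a Dehn twist along one of $a_1,\ldots,a_{j-1}$,'' justifying this by the fact that $\Ker\Phi_{a_i}$ is generated by $t_{a_i}$. That kernel statement only controls the ambiguity of lifting a curve through surgery along a \emph{fixed} reference path; it says nothing about what happens when the reference path is pushed across a cusp, which is unavoidable here, since the $\beta$-segment and the $b$-path cross each inner cusped circle on different arcs separated by a cusp. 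The actual discrepancy per swept cusp is what the paper isolates as Lemma~\ref{T:vc consecutivecusps}: applying unsink turns the cusp into a Lefschetz singularity with vanishing cycle $t_{a_i}(b_i)$, so the two readings of the outer vanishing cycle differ by $t_{t_{a_i}(b_i)}^{\pm 1}$. This is not a power of $t_{a_i}$; it does not even preserve $a_i$ (since $t_{a_i}(b_i)$ meets $a_i$ in one point), and applying it to a curve is not in general realized by handle-slides over $a_i$. So your claim that the discrepancy is automatically absorbed by upper-triangular slides fails, and with it the proof of the second bullet and of the converse.

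The disjointness normalization is what rescues the statement, and in the correct proof it plays a different role than in yours: you invoke disjointness from $b_1,\ldots,b_{j-1}$ only so that $\beta$ forms a cut system, whereas in the paper it is precisely the hypothesis under which the cusp twists act trivially --- a curve read over the $a$-arc is already disjoint from $a_i$, so once it is also made disjoint from $b_i$ it is disjoint from $t_{a_i}(b_i)$, and the twist $t_{t_{a_i}(b_i)}^{\pm1}$ fixes it (the ``in particular'' clause of Lemma~\ref{T:vc consecutivecusps}). Relatedly, your argument for the indices $l>g-k$ (``the fold circles carry no cusps, so the three radial paths return the same curve'') has the same gap: those three paths still sweep across the cusps of the inner $g-k$ circles, so the readings agree only after the same normalization. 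You do flag at the end that transport past cusps ``has to be pinned down,'' but you treat this as a matter of co-orientation and path conventions; it is in fact the main content of the proof, requiring the unsink computation of Lemma~\ref{T:vc consecutivecusps}, not a choice of conventions.
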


\noindent
Proposition~\ref{T:relation vc diagram} can be deduced immediately from the following lemma:

\begin{lemma}\label{T:vc consecutivecusps}
	
	Let $d_1,d_2,d_3,d_4$ be vanishing cycles associated with the corresponding reference paths around two consecutive cusps as shown in Figure~\ref{F:refpath consecutivecusp1}. 
	The curve $d_4$ is isotopic to $t_{t_{d_1}(d_2)}^{-1}(d_3)$. 
	In particular $d_3$ and $d_4$ are isotopic if $d_3$ is disjoint from $d_2$. 
	\begin{figure}[htbp]
		\subfigure[]{\includegraphics[width=22mm]{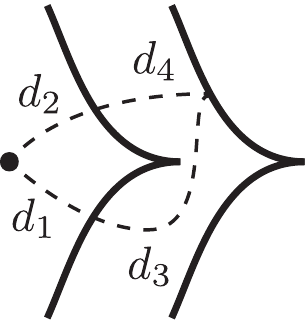}
			\label{F:refpath consecutivecusp1}}\hspace{2.5em}
		\subfigure[]{\includegraphics[width=22mm]{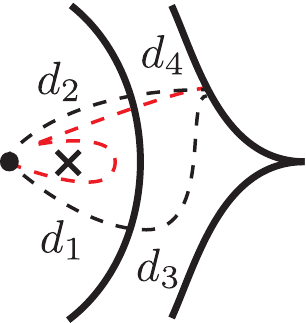}
			\label{F:refpath consecutivecusp2}}
		\caption{Reference paths around two consecutive cusps.}
		\label{F:refpath consecutivecusp}
	\end{figure}
\end{lemma}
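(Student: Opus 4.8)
The plan is to reduce the statement to the local geometry around the two cusps depicted in Figure~\ref{F:refpath consecutivecusp1} and to keep track of how the vanishing cycle carried by the indefinite fold arc running between the two cusps is parallel transported past each cusp. Write $P_1,P_2$ for the two cusps and $M$ for the fold arc joining them. The four curves live on a single central fiber $\Sigma$, and I read them off so that $d_1,d_2$ are the vanishing cycles associated with the reference paths meeting the two arcs at $P_1$ (with $d_2$ belonging to $M$), while $d_3,d_4$ are associated with the paths at $P_2$; the connecting datum is the cycle that $M$ transmits from $P_1$ to $P_2$. Every identification of a nearby fiber with a surgery $\Sigma_c$ of $\Sigma$, and every resulting monodromy statement, is the one furnished by the discussion in Section~\ref{S:monodromy indeffold}.

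First I would isolate the behaviour at a single cusp. In a neighbourhood of one cusp the two incident fold arcs carry vanishing cycles, say $u$ and $w$, meeting geometrically in a single point; this is the standard cusp condition, and it follows from the local normal form of an indefinite cusp together with the surgery description of Section~\ref{S:monodromy indeffold}. What I actually need is sharper: two reference paths reaching the same arc but differing by winding once around the cusp value through the higher--genus region produce vanishing cycles differing by a single Dehn twist along the cycle of the \emph{other} arc, i.e.\ the cycle $w$ gets replaced by $t_u^{\pm1}(w)$, the twist along $u$ being exactly the monodromy contributed by passing the cusp. I would prove this by joining the two paths by a small loop encircling the cusp value, which meets the critical image in the two fold arcs, and computing the associated monodromy using the rule that the monodromy on the far side of a fold equals $\Phi_u(\mu)$, as recalled in Section~\ref{S:monodromy indeffold}.

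Applying this at $P_1$ shows that the vanishing cycle of $M$, which is $d_2$ when measured near $P_1$, becomes $t_{d_1}(d_2)$ once the reference path is routed to the $P_2$--end of $M$, since $d_1$ is precisely the cycle of the other arc at $P_1$. Now at $P_2$ the two incident arcs carry $t_{d_1}(d_2)$ (coming along $M$) and $d_3$ (on the near side of the remaining arc); applying the single--cusp relation of the previous paragraph a second time, with $t_{d_1}(d_2)$ in the role of the twisting cycle $u$, gives $d_4=t_{t_{d_1}(d_2)}^{-1}(d_3)$, which is the asserted formula. I would also verify that the alternative routing in Figure~\ref{F:refpath consecutivecusp2} produces the same curves, so that the statement does not depend on the choice of reference paths.

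The final clause then follows formally. Since $t_{d_1}(d_2)$ agrees with $d_2$ away from a neighbourhood of $d_1$, if $d_3$ is disjoint from $d_2$ and, as is visible in the local configuration, also from $d_1$, then $d_3$ is disjoint from $t_{d_1}(d_2)$, whence $t_{t_{d_1}(d_2)}^{-1}(d_3)=d_3$ and $d_4=d_3$. I expect the main obstacle to be the single--cusp relation of the second paragraph: pinning down the exact twisting curve, the sign of the exponent, and the bookkeeping of co--orientations and transport directions so that the two twists compose in the stated order. Once that local model is settled via the cusp normal form and the surgery homomorphism $\Phi_c$, the remainder is the two--fold application above together with the elementary intersection argument for the special case.
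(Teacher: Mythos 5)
Your overall algebraic shape is right, but the proof of your key step fails, and the failure is exactly where the paper's one genuine idea sits. Your ``single--cusp relation'' --- that two reference paths to the same arc ``differing by winding once around the cusp value through the higher--genus region'' produce cycles differing by $t_u^{\pm1}$ --- is not a well-posed monodromy statement in the original map: a cusp value lies \emph{on} the critical image, so any loop encircling it must cross both incident fold arcs, and there is no winding available inside the higher-genus region (two paths from the base region to the same arc that stay in the complement of the critical image are homotopic there, hence give isotopic vanishing cycles). For the same reason your proposed computation cannot be carried out: the rule recalled in Section~\ref{S:monodromy indeffold}, that the monodromy on the far side of a fold is $\Phi_c(\mu)$, transfers the monodromy of a loop lying in the \emph{complement} of the critical image across a fold; it assigns no monodromy to a loop that meets the critical image in the two fold arcs. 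Relatedly, your intermediate assertion that the cycle of the middle arc $M$ ``is $d_2$ near $P_1$ but becomes $t_{d_1}(d_2)$ once routed to the $P_2$--end'' is false in the unmodified picture: the vanishing cycle of a fold arc does not depend on where along the arc the (homotopically fixed) reference path ends, so the cycle of $M$ is $d_2$ throughout. There is no canonical ``transport through a cusp'' producing a twist.

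The missing ingredient is the unsink move, which is the entirety of the paper's proof: unsinking the left cusp merges the two fold arcs and replaces the cusp by a Lefschetz singularity whose vanishing cycle is $t_{d_1}(d_2)$ (the same fact is used again in the proof of Lemma~\ref{T:mu is monodromy}). Only \emph{after} this homotopy does ``winding around'' make sense --- the Lefschetz value is an isolated critical value in the interior of the region, the reference paths for $d_3$ and $d_4$ now differ by a loop encircling it, and the monodromy of that loop is the Dehn twist along $t_{d_1}(d_2)$, giving $d_4=t_{t_{d_1}(d_2)}^{-1}(d_3)$ at once. If you want to salvage your two-step plan, each ``passing a cusp'' step must itself be justified by an unsink (or an equivalent local-model computation you have not supplied); as written, the appeal to the surgery homomorphism and the cusp normal form does not substitute for it. One smaller point: in the final clause you correctly note that one also needs $d_3$ disjoint from $d_1$ (so that $d_3$ misses $t_{d_1}(d_2)$); that disjointness comes from the configuration in Figure~\ref{F:refpath consecutivecusp1}, and flagging it explicitly is a point in your favor, since disjointness from $d_2$ alone would not suffice.
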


\begin{remark}

Since $d_1$ and $d_2$ intersect on one point, we can always make $d_3$ disjoint from $d_2$ by applying handle-slides over $d_1$. 

\end{remark}

\begin{proof}[Proof of Lemma~\ref{T:vc consecutivecusps}]
	Applying unsink to the left cusp in Figure~\ref{F:refpath consecutivecusp1}, we obtain a new map whose critical value set is shown in Figure~\ref{F:refpath consecutivecusp2}. 
	The curve $d_3$ is isotopic to a vanishing cycle associated with the red reference path in Figure~\ref{F:refpath consecutivecusp2}. 
	Thus, $d_3$ is sent to $d_4$ by the Dehn twist along the curve ${t_{d_1}(d_2)}$. 
\end{proof}

We are now ready for giving a criterion for a system of simple closed curves to be a diagram of a simplified trisection, in terms of the corresponding vanishing cycles. 

\begin{theorem}\label{T:descriptionSTviaMCG}
 
Vanishing cycles $a_1,\ldots,a_g,b_1,\ldots,b_{g-k},c_1,\ldots,c_{g-k}$ of a simplified $(g,k)$--trisection taken as above satisfy the following conditions: 

\begin{enumerate}

\item 
the curves $b_i$ and $c_i$ intersect with $a_i$ on one point for each $i=2,\ldots,g-k$,

\item 
for any $i\in \{0,\ldots,g-k-1\}$, $c_{i+1}' = \mu_i^{-1}(c_{i+1})$ intersects $b_{i+1}$ on one point, where $\Sigma_{a_1,\ldots,a_i}=(\Sigma_{a_1,\ldots,a_{i-1}})_{a_i}$ and $\mu_i\in \Mod(\Sigma_{a_1,\ldots,a_i})$ is inductively defined as
\[
\mu_i = \begin{cases}
[\id_{\Sigma}] & (i=0)\\
\Phi_{a_i}(t_{t_{c_{i}}a_{i}}\circ \mu_{i-1}\circ t_{t_{b_{i}}c_{i}'}\circ t_{t_{a_{i}}b_{i}}) & (i>0). 
\end{cases}
\]
(Note that $t_{t_{c_{i}}a_{i}}\circ \mu_{i-1}\circ t_{t_{b_{i}}c_{i}'}\circ t_{t_{a_{i}}b_{i}}$ preserves $a_i$ if $c_i'$ intersects $b_i$ on one point. 
In particular the condition for $i$ makes sense only if that for $i-1$ holds.) 

\item 
for any $j\in \{g-k,\ldots,g-1\}$, $\Phi_{a_j}\circ \cdots \circ \Phi_{a_{g-k+1}}(\mu_{g-k})$ preserves $a_{j+1}$. 

\end{enumerate}

\noindent
Conversely, for three sequences of curves $\{a_1,\ldots,a_g\},\{b_1,\ldots,b_{g-k}\},\{c_1,\ldots,c_{g-k}\}$ satisfying the conditions above (such that $b_i$ and $c_i$ are disjoint from $a_1,\ldots,a_{i-1}$), there exists a simplified $(g,k)$--trisection $f':X'\to \R^2$ whose vanishing cycles associated with the reference paths in Figure~\ref{F:critv simplified trisection} are $a_1,\ldots,a_g,b_1,\ldots,b_{g-k},c_1,\ldots,c_{g-k}$. 

\end{theorem}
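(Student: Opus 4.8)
The plan is to prove both directions by identifying $\mu_i$ with the monodromy of $f$ along a loop encircling the innermost $i$ fold circles, just outside the $i$-th one, and then reading the three conditions off the local cusp geometry. I would first establish this monodromy interpretation (the content of Lemma~\ref{T:mu is monodromy}) by induction on $i$. The base case $i=0$ is trivial since the central fiber carries no monodromy. For the inductive step, I note that the annular region between the $(i-1)$-th and $i$-th fold circles contains no critical values, so the monodromy just inside the $i$-th circle equals $\mu_{i-1}\in\Mod(\Sigma_{a_1,\ldots,a_{i-1}})$. A loop just outside the $i$-th circle is obtained from one just inside by crossing the $i$-th fold (an indefinite fold with vanishing cycle $a_i$), which by the surgery relation of Section~\ref{S:monodromy indeffold} applies $\Phi_{a_i}$, together with passages across the three cusps carried by that circle. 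Applying Lemma~\ref{T:vc consecutivecusps} to each consecutive pair of cusps shows that crossing the cusp between the $a$-- and $b$--arcs (resp.\ the $b$-- and $c$--arcs, the $c$-- and $a$--arcs) contributes the Dehn twist $t_{t_{a_i}b_i}$ (resp.\ $t_{t_{b_i}c_i'}$, $t_{t_{c_i}a_i}$), where $c_i'=\mu_{i-1}^{-1}(c_i)$ appears because the $c$--reference path traverses the inner circles and thus registers $\mu_{i-1}$. Reading the cyclic order of cusps and reference paths off Figure~\ref{F:critv simplified trisection} then yields exactly $\mu_i=\Phi_{a_i}(t_{t_{c_i}a_i}\circ\mu_{i-1}\circ t_{t_{b_i}c_i'}\circ t_{t_{a_i}b_i})$.

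With the monodromy interpretation in hand, the three conditions follow from local considerations. Condition~(1) holds because $a_i$ and $b_i$ (resp.\ $a_i$ and $c_i$) are the vanishing cycles on two arcs of the $i$-th fold circle separated by a single cusp, and the standard local model of a cusp forces such cycles to meet transversely in one point. Condition~(2) is the statement that the composite inside $\Phi_{a_i}$ preserves $a_i$, which by the parenthetical remark is equivalent to $c_i'$ meeting $b_i$ once; geometrically this is the same cusp-intersection condition for the $i$-th circle, now transported to the fiber $\Sigma_{a_1,\ldots,a_{i-1}}$ by $\mu_{i-1}$, so that $\mu_i$ is well defined. For the outer $k$ circles, Proposition~\ref{T:relation vc diagram} shows the three vanishing cycles coincide, so crossing such a circle contributes no cusp twists and only the surgery $\Phi_{a_l}$; well-definedness of these iterated surgeries is exactly the requirement that $\Phi_{a_j}\circ\cdots\circ\Phi_{a_{g-k+1}}(\mu_{g-k})$ preserve $a_{j+1}$, which is condition~(3).

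For the converse I would run this construction backwards. Given curves satisfying (1)--(3), I build the critical image as $g$ nested circles, placing three cusps with prescribed vanishing cycles $a_i,b_i,c_i$ on each of the inner $g-k$ circles and a single (cusp-free) indefinite fold with vanishing cycle $a_l$ on each of the outer $k$ circles. Over the annulus between consecutive circles I prescribe the surface bundle using the monodromies $\mu_i$; conditions~(1) and~(2) guarantee that each inner circle with its triple of cusps is locally realizable as a stable map (the required one-point intersections hold in the correct frame, so the local cusp models of Lemma~\ref{T:vc consecutivecusps} can be glued), while condition~(3) guarantees that the outer circles assemble the correct $\natural^k(S^1\times D^3)$ pieces. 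Filling in the central disk with the trivial bundle of fiber $\Sigma=\Sigma_g$ then produces a stable map $f':X'\to\R^2$ whose critical image is of the prescribed nested-with-cusps form, hence a simplified $(g,k)$--trisection; by construction its vanishing cycles along the reference paths of Figure~\ref{F:critv simplified trisection} are the given curves.

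The main obstacle is the precise monodromy recursion in the first step---in particular, pinning down the exact cyclic placement of $\mu_{i-1}$ among the three cusp Dehn twists and the appearance of $c_i'=\mu_{i-1}^{-1}(c_i)$ in place of $c_i$. This forces careful bookkeeping of the three reference paths relative to the three cusps and of the successive identifications of fibers under the surgeries $\Phi_{a_1},\ldots,\Phi_{a_{i-1}}$, with repeated and correctly oriented applications of Lemma~\ref{T:vc consecutivecusps}. A secondary technical point in the converse is verifying that the locally constructed pieces glue to a globally well-defined stable map rather than merely a singular fiber bundle, which requires checking that the prescribed monodromies are compatible with the cusp and fold normal forms.
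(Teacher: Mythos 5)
Your proposal takes essentially the same route as the paper: both proofs hinge on first establishing the monodromy interpretation of $\mu_i$ (Lemma~\ref{T:mu is monodromy}) by induction, with the three cusp crossings contributing the twists $t_{t_{a_i}b_i}$, $t_{t_{b_i}c_i'}$, $t_{t_{c_i}a_i}$ and the fold crossing contributing $\Phi_{a_i}$, then reading conditions (1)--(3) off this interpretation, and finally proving the converse by realizing the prescribed critical value set piece by piece, with conditions (1)--(2) supplying the one-point intersections needed for the cusps and condition (3) the invariance needed to extend across the outer fold circles. The only difference is cosmetic: the paper builds the converse map over the complement of a shaded sector and then attaches the remaining cusps there, whereas you build it annulus by annulus from the center outward; both constructions rest on the same gluing and monodromy considerations.
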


\noindent
We need the following lemma to prove Theorem~\ref{T:descriptionSTviaMCG}. 

\begin{lemma}\label{T:mu is monodromy}
	
	The mapping class $\mu_i$ defined in Theorem~\ref{T:descriptionSTviaMCG} is the monodromy along a loop (with counterclockwise orientation) obtained by pushing the $i$--th innermost circle to the inside of it.
	
\end{lemma}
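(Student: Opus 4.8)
The plan is to argue by induction on $i$, using the surgery homomorphism together with the monodromy correspondence recalled in Section~\ref{S:monodromy indeffold} to cross the fold with vanishing cycle $a_i$, and using Lemma~\ref{T:vc consecutivecusps} to account for the three cusps on the $i$--th innermost circle. For the base case $i=0$, the loop obtained by pushing the (nonexistent) innermost circle to the center bounds a disk containing no critical values, so its monodromy is $[\id_\Sigma]=\mu_0$.

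For the inductive step I would fix the $i$--th innermost circle $C_i$, whose three cusps divide it into three fold arcs carrying the vanishing cycles $a_i,b_i,c_i$ read off from the reference paths in Figure~\ref{F:critv simplified trisection}. The first task is to describe the loop $\ell_i$ produced by pushing $C_i$ to its inside, up to homotopy in the complement of the critical value set, as a concatenation of three short loops---one encircling each cusp of $C_i$---together with a loop isotopic to the one used for $\mu_{i-1}$, which records the inner circles $C_1,\dots,C_{i-1}$. Because the reference path reaching the arc of $c_i$ runs around these inner circles, the cycle $c_i$ must be transported by the inner monodromy before it can be compared with $b_i$; this is the source of the substitution $c_i'=\mu_{i-1}^{-1}(c_i)$.

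I would then evaluate each piece. Applying Lemma~\ref{T:vc consecutivecusps} to each of the three pairs of consecutive cusps shows that encircling a cusp whose adjacent arcs carry vanishing cycles $d_1$ and $d_2$ contributes the Dehn twist $t_{t_{d_1}(d_2)}$; for the three cusps of $C_i$ this yields $t_{t_{a_i}b_i}$, $t_{t_{b_i}c_i'}$ and $t_{t_{c_i}a_i}$. Composing these in the cyclic order dictated by the reference paths and inserting $\mu_{i-1}$ at the stage where $\ell_i$ wraps around the inner circles produces the mapping class $\nu_i=t_{t_{c_i}a_i}\circ\mu_{i-1}\circ t_{t_{b_i}c_i'}\circ t_{t_{a_i}b_i}$ acting on the fiber lying just inside $C_i$. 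Finally, the passage from this inner loop to the pushed-in loop $\ell_i$ amounts to a single surgery along $a_i$, so by the correspondence of Section~\ref{S:monodromy indeffold} the monodromy of $\ell_i$ is $\Phi_{a_i}(\nu_i)=\mu_i$; here one uses that $\nu_i$ fixes $a_i$, which is exactly the consistency condition already noted in the statement of Theorem~\ref{T:descriptionSTviaMCG}.

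The main obstacle I anticipate is the bookkeeping of the fiber identifications across the three cusps and the fold $C_i$. One must pin down the cyclic order in which the arcs of $a_i,b_i,c_i$ and the inner loop are met along $\ell_i$, so that the four factors of $\nu_i$ appear in the stated order; one must justify that the transport of $c_i$ around the inner circles is precisely $\mu_{i-1}^{-1}$ rather than its inverse or a conjugate; and one must verify that a single application of the surgery homomorphism $\Phi_{a_i}$ is the correct passage from the inside of $C_i$ to the pushed-in loop. Each individual ingredient is supplied by Lemma~\ref{T:vc consecutivecusps} and Section~\ref{S:monodromy indeffold}; the care lies in assembling them consistently with the orientation and co-orientation conventions of Figure~\ref{F:critv simplified trisection}.
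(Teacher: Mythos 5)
Your proposal follows the same skeleton as the paper's proof: induct on $i$, recognize $c_i'=\mu_{i-1}^{-1}(c_i)$ as the vanishing cycle for the direct (non-wrapping) reference path, identify the monodromy just inside the $i$--th circle with $\nu_i=t_{t_{c_i}a_i}\circ\mu_{i-1}\circ t_{t_{b_i}c_i'}\circ t_{t_{a_i}b_i}$, and then cross the $a_i$--fold using the correspondence of Section~\ref{S:monodromy indeffold} to get $\mu_i=\Phi_{a_i}(\nu_i)$. The base case and the final fold-crossing step are handled exactly as in the paper.

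The genuine gap is in your central step, where you assert that Lemma~\ref{T:vc consecutivecusps} shows that ``encircling a cusp whose adjacent arcs carry vanishing cycles $d_1$ and $d_2$ contributes the Dehn twist $t_{t_{d_1}(d_2)}$.'' That lemma does not say this: it only records how one specific curve transforms, namely $d_4=t_{t_{d_1}(d_2)}^{-1}(d_3)$ when a reference path is slid past a cusp. Knowing that the cusp monodromy agrees with $t_{t_{d_1}(d_2)}$ on a single isotopy class of curves does not identify it as that mapping class, and it is the mapping class you must have in order to multiply the three contributions and land on the group element $\nu_i$. (There is a related geometric problem: a loop pushed strictly inside the $i$--th circle does not encircle its cusps, which lie on the circle itself, so your decomposition of $\ell_i$ into cusp loops cannot be taken literally; moreover the fold-crossing correspondence of Section~\ref{S:monodromy indeffold} is stated only for a circle consisting of indefinite folds, i.e.\ without cusps, so it cannot be applied across the cusped circle as it stands.) The paper closes both gaps at once by applying unsink to the three cusps: this homotopy turns the cusped circle into a smooth fold circle with three Lefschetz singularities just inside it, whose vanishing cycles are $t_{a_i}(b_i)$, $t_{b_i}(c_i')$ and $t_{c_i}(a_i)$. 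The monodromy around a Lefschetz singularity genuinely is the Dehn twist along its vanishing cycle, so the monodromy along the dotted loop inside the now-smooth circle is $\nu_i$, and Section~\ref{S:monodromy indeffold} then applies verbatim to yield $\mu_i=\Phi_{a_i}(\nu_i)$. So your plan is repairable, but the repair is precisely the unsink argument (the same move that proves Lemma~\ref{T:vc consecutivecusps} itself); citing that lemma alone leaves the identification of $\nu_i$ unproven. Note also that the paper disposes of the remaining cusp-free circles (the case $j\geq g-k$) by a separate appeal to Section~\ref{S:monodromy indeffold} together with the induction hypothesis, a point your write-up omits.
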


\begin{proof}
	We prove the lemma by induction on $i$. 
	The statement is obvious for $i=0$. 
	Assume that the lemma holds for $i=j-1$, where $j\leq g-k-1$. 
	The curve $c_i'=\mu_{i-1}^{-1}(c_i)$ is a vanishing cycle of $f$ associated with the reference path shown in Figure~\ref{F:refpaths simplifiedtrisection1}. 
	\begin{figure}[htbp]
		\subfigure[]{\includegraphics[width=40mm]{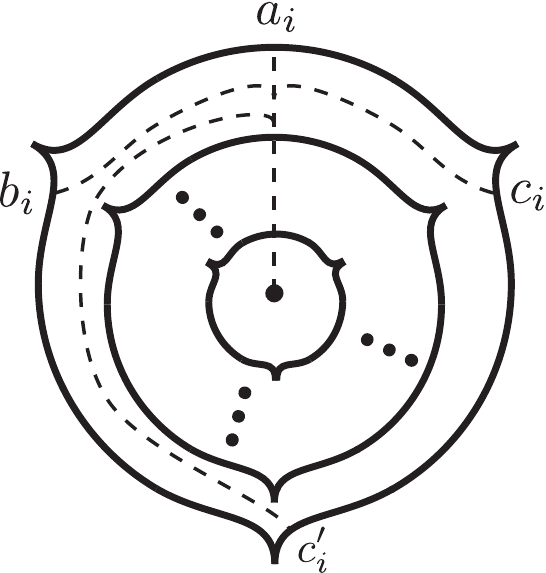}
			\label{F:refpaths simplifiedtrisection1}}
		\subfigure[]{\includegraphics[width=40mm]{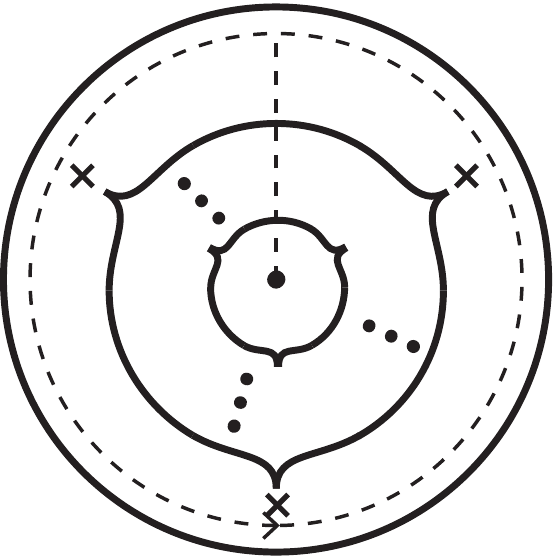}
			\label{F:refpaths simplifiedtrisection2}}
		\caption{Reference paths for \subref{F:refpaths simplifiedtrisection1} vanishing cycles $a_i,b_i,c_i,c_i'$ and \subref{F:refpaths simplifiedtrisection2} a monodromy $t_{t_{c_{i}}a_{i}}\circ \mu_{i-1}\circ t_{t_{b_{i}}c_{i}'}\circ t_{t_{a_{i}}b_{i}}$.}
		\label{F:refpaths simplifiedtrisection}
	\end{figure}
	We apply unsinks to the three cusps on the $i$--th innermost fold circles. 
	The critical value set of the resulting map is shown in Figure~\ref{F:refpaths simplifiedtrisection2}. 
	Vanishing cycles of the three Lefschetz singularities are respectively equal to $t_{a_i}(b_i), t_{b_i}(c_i')$ and $t_{c_i}(a_i)$. 
	Thus the monodromy along the dotted curve in Figure~\ref{F:refpaths simplifiedtrisection2} is equal to $t_{t_{c_{i}}a_{i}}\circ \mu_{i-1}\circ t_{t_{b_{i}}c_{i}'}\circ t_{t_{a_{i}}b_{i}}$. 
	The statement then follows from the observation in Section~\ref{S:monodromy indeffold}. 
	Lastly, the statement for $\mu_j$ with $j\geq g-k$ also follows from the observation in Section~\ref{S:monodromy indeffold}, together with the induction hypothesis. 
\end{proof}

\begin{proof}[Proof of Theorem~\ref{T:descriptionSTviaMCG}]
Let $a_1,\ldots,a_g,b_1,\ldots,b_{g-k},c_1,\ldots,c_{g-k}$ be vanishing cycles of a simplified $(g,k)$--trisection taken as above. 
The condition (1) holds since two vanishing cycles of indefinite folds around a cusp intersect on one point, while the conditions (2) and (3) immediately follow from Lemma~\ref{T:mu is monodromy}.  

In order to prove the latter part of Theorem~\ref{T:descriptionSTviaMCG}, suppose that three systems of curves $(a_1,\ldots,a_g),\allowbreak(b_1,\ldots,b_{g-k}),(c_1,\ldots,c_{g-k})$ satisfy the conditions in the theorem. 
The condition (1) guarantees existence of a map $f_1:X_1\to \R^2$, where $X_1$ is a $4$--manifold with boundary, such that the critical value set of $f_1$ is the same as that in the complement of the shaded region in Figure~\ref{F:critv simplified trisection} and it has the desired vanishing cycles $a_1,\ldots,a_g,b_1,\ldots,b_{g-k},c_1\ldots,c_{g-k}$. 
We can inductively see that the condition (2) (together with Lemma~\ref{T:mu is monodromy}) guarantees that we can attach $g-k$ cusps in the shaded region to $f_1$, and the condition (3) (together with Lemma~\ref{T:mu is monodromy}) further guarantees that the resulting map can be extended to the all of the shaded region.  
\end{proof}

For applications of Theorem~\ref{T:descriptionSTviaMCG}, we need to describe the monodromy $\mu_{i+1}$ of a simplified trisection as a product of Dehn twists under the assumption that the inner monodromy $\mu_{i}$ is trivial:

\begin{lemma}\label{T:monodromyalongtriangle}

Let $f:X\to \R^2$ be a simplified trisection.
Suppose that the monodromy $\mu_{i}$ obtained as in Theorem~\ref{T:descriptionSTviaMCG} is trivial.
We denote $\Sigma_{a_1,\ldots,a_{i}}$ by $\Sigma$ and $a_{i+1}$ (resp.~$b_{i+1},c_{i+1}$) by $a$ (resp.~$b,c$) for simplicity. 

\begin{enumerate}

\item 
A regular neighborhood of $a\cup b\cup c$ is a genus--$1$ surface with three boundary components. 

\item 
The monodromy $\mu_{i+1}$ is $t_{\delta_1}^2t_{\delta_3}^2t_{\delta_2}^{-1}$ if we can take orientations of $a,b$ and $c$ so that the algebraic intersections $a\cdot b$, $b\cdot c$ and $c\cdot a$ are all equal to $1$ (Figure~\ref{F:vc_triangle2}), and is equal to $t_{\delta_1}^{-2}t_{\delta_3}^{-2}t_{\delta_2}$ otherwise (Figure~\ref{F:vc_triangle1}). 

\end{enumerate}
\begin{figure}[htbp]
\subfigure[]{\includegraphics[width=37mm]{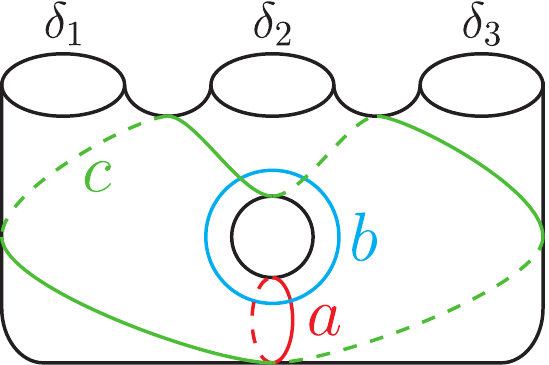}
\label{F:vc_triangle2}}
\subfigure[]{\includegraphics[width=37mm]{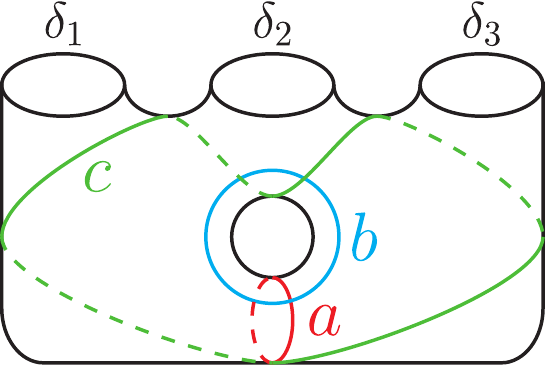}
\label{F:vc_triangle1}}
\caption{Two possibilities of configurations of vanishing cycles of $f$. }
\label{F:monodromytriangle}
\end{figure}
\end{lemma}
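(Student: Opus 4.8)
The plan is to analyze the situation when $\mu_i$ is trivial, so that the monodromy $\mu_{i+1}$ from Theorem~\ref{T:descriptionSTviaMCG} simplifies to $\Phi_a(t_{t_c a}\circ t_{t_b c'}\circ t_{t_a b})$, where $c' = c$ because $\mu_i = \id$. The geometric content is that $a, b, c$ are the three vanishing cycles emanating from a triple of cusps on the $(i{+}1)$--st innermost fold circle, each consecutive pair meeting in a single point (by condition (1) of Theorem~\ref{T:descriptionSTviaMCG}). So first I would establish part (1): the three curves $a, b, c$ pairwise intersect once, and a regular neighborhood of their union is determined up to diffeomorphism by the combinatorics of these intersections together with the cyclic orientation data. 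A standard Euler-characteristic count on $\nu(a\cup b\cup c)$ — three arcs-worth of band-attachments to a disk regular neighborhood of the three intersection points — gives a surface with $\chi = -1$ and three boundary circles, hence genus $1$; I would verify the boundary count by tracking how the three intersection points and three bands assemble.

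For part (2), the central computation is to evaluate the product $t_{t_c a}\circ t_{t_b c}\circ t_{t_a b}$ (applying $\Phi_a$ afterward) on the explicit genus--$1$--with--three--holes model, in each of the two intersection-sign configurations of Figures~\ref{F:vc_triangle2} and~\ref{F:vc_triangle1}. The key observation I would exploit is the chain relation in the mapping class group: three curves meeting pairwise once, arranged in a ``triangle'', generate a relation among Dehn twists closely tied to the star relation / the $3$--chain relation, and the product of the three twists $t_{t_a b}, t_{t_b c}, t_{t_c a}$ along the pushed-off curves should collapse to a product of boundary twists $t_{\delta_1}, t_{\delta_2}, t_{\delta_3}$. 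I expect that in the configuration where all algebraic intersections are $+1$ (Figure~\ref{F:vc_triangle2}) the star relation yields $t_{\delta_1}^2 t_{\delta_3}^2 t_{\delta_2}^{-1}$, while reversing an orientation (Figure~\ref{F:vc_triangle1}) inverts the roles of the relevant boundary components and gives $t_{\delta_1}^{-2} t_{\delta_3}^{-2} t_{\delta_2}$. I would identify the $\delta_j$ concretely as the three boundary components of $\nu(a\cup b\cup c)$, matching the pictures.

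The main obstacle will be bookkeeping the twist product correctly: the curves appearing are not $a, b, c$ themselves but the twisted images $t_a(b)$, $t_b(c)$, $t_c(a)$, so I must first understand how these push-offs sit relative to the triangle, and then feed them into the star relation with the correct exponents and orientations. The subtlety is that the star relation is usually stated for a symmetric configuration, and here the asymmetry introduced by the $\Phi_a$ surgery (which caps off $a$, turning $t_a$ into the identity after surgery) must be handled carefully — applying $\Phi_a$ sends $t_{t_c a}$ and $t_{t_a b}$ to twists about curves that may reduce in the capped surface. I would therefore compute in two stages: first reduce the product in $\Sigma$ using the star relation, then push forward along $\Phi_{a}$ and read off which boundary twists survive. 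A concrete check of the action on $H_1$ of the model surface, or on a test arc, would confirm the two sign cases and pin down the claimed words $t_{\delta_1}^{\pm 2}t_{\delta_3}^{\pm 2}t_{\delta_2}^{\mp 1}$.
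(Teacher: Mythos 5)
Your setup agrees with the paper's: both start from $\mu_{i+1}=\Phi_a\bigl(t_{t_c(a)}\,t_{t_b(c)}\,t_{t_a(b)}\bigr)$ (using $c'=c$ when $\mu_i$ is trivial), and your idea of handling the second sign case by an orientation reversal is exactly the paper's reduction. The genuine gap is the central step of part (2). You assert that the product $t_{t_c(a)}\,t_{t_b(c)}\,t_{t_a(b)}$ should \emph{already} collapse to a product of twists about $\delta_1,\delta_2,\delta_3$ inside $\Sigma$, via the star or $3$--chain relation, with $\Phi_a$ applied only afterwards. That intermediate claim is false, not merely unproven: any product of twists about the $\delta_j$ becomes trivial in the torus $T^2$ obtained from $\nu(a\cup b\cup c)$ by capping its three boundary circles with disks, since each $\delta_j$ then bounds a disk; but the product above does not become trivial there. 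Indeed, in $T^2$ one may take $a,b,c$ of slopes $(1,0),(0,1),-(1,1)$, and a direct matrix computation shows that the three twists about $t_a(b),t_b(c),t_c(a)$ multiply, on $H_1(T^2;\Z)$, to a nonzero power of the matrix of $t_a$ (up to an overall sign) --- for instance $\smat{-1 & 3\\ 0 & -1}$ with one standard convention --- and in particular not to the identity. So no relation in the mapping class group of the genus--$1$ model (or of $\Sigma$) can perform the collapse you want; the collapse to boundary twists happens only \emph{after} applying $\Phi_a$, whose kernel absorbs the leftover factor. Note that $\Ker\Phi_a$ contains not only $t_a$ but also $(t_at_b)^3$ (because $\Phi_a$ involves forgetting the two marked points created by surgery), and identifying this kernel factor is precisely the hard part. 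Moreover, neither relation you invoke fits this configuration: the $3$--chain relation requires the two end curves of the chain to be disjoint, and Gervais' star relation requires three pairwise \emph{disjoint} curves plus a curve meeting each once, whereas here all three curves pairwise intersect once. What the paper actually does is an explicit computation: it introduces auxiliary curves $x,y\subset \nu(a\cup b\cup c)$ with $c=t_xt_yt_a^{-1}(b)$, pushes the product through braid relations into a product of twists about curves disjoint from $a$ (which $\Phi_a$ sends to $t_{\delta_j}^{\pm1}$) and elements of $\Ker\Phi_a$ (powers of $t_a$ and $(t_at_b)^3$), and only then applies $\Phi_a$ to read off $t_{\delta_1}^2t_{\delta_2}^{-1}t_{\delta_3}^2$. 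Some computation of this kind is unavoidable; your ``I expect the star relation yields the answer'' is exactly the content to be proved.

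A smaller point: your part (1) arithmetic is off. The union $a\cup b\cup c$ is a graph with three vertices and six edges (each curve is cut into two arcs by its two intersection points), so $\chi\bigl(\nu(a\cup b\cup c)\bigr)=-3$, not $-1$, and the neighborhood is built from three disks and six bands, not three. With $\chi=-3$ and three boundary circles one indeed gets genus $1$; your values $\chi=-1$ and three boundary circles would force genus $0$, contradicting your own conclusion. The statement of part (1) is nevertheless correct (tracing the ribbon structure at the three crossings gives three boundary circles for every choice of crossing signs), so this part is repairable; the serious gap is the one in part (2).
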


\begin{proof}
The first statement immediately follows from Theorem~\ref{T:descriptionSTviaMCG} and the assumption (note that each two of the curves $a,b,c$ intersect on one point). 
Since the monodromy $\mu_{i+1}$ becomes the inverse of it when we change the orientation of $X$ (and thus that of $\Sigma$), it is enough to show the statement under the assumption that we can take orientations of $a,b$ and $c$ so that the algebraic intersections $a\cdot b$, $b\cdot c$ and $c\cdot a$ are all equal to $1$ (i.e.~$a,b$ and $c$ are as shown in Figure~\ref{F:vc_triangle2}). 
In what follows, for (an isotopy class of) a simple closed curve $d$ in $\Sigma$, we also denote the Dehn twist along it by $d$ (which is contained in $\Mod(\Sigma;\Pa \Sigma)$). 
For an element $\varphi\in \Mod(\Sigma;\Pa \Sigma)$, let ${}_\varphi(d)$ be the isotopy class of a simple closed curve in $\Sigma$ represented by the image of $d$ by a representative of $\varphi$. 

We take simple closed curves $x$ and $y$ as shown in Figure~\ref{F:scc_neigh_abc}. 
\begin{figure}[htbp]
\includegraphics[width=35mm]{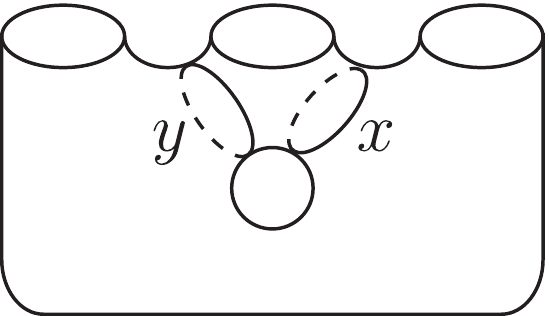}
\caption{Simple closed curves in a neighborhood of $a\cup b\cup c$.}
\label{F:scc_neigh_abc}
\end{figure}
By Lemma~\ref{T:mu is monodromy} and the assumption, the monodromy $\mu_{i+1}$ is equal to $\Phi_a({}_c(a){}_b(c){}_a(b))$. 
Since the curve $c$ is equal to ${}_{x y \overline{a}}(b)$, we can calculate ${}_c(a){}_b(c){}_a(b)$ as follows (in the following calculation, the only underlined part in each line is changed when proceeding to the next line):
{\allowdisplaybreaks
\begin{align*}
{}_c(a){}_b(c){}_a(b) =& \underline{c}a\underline{\overline{c}}\cdot b\underline{c}\overline{b}\cdot ab\overline{a} \\
=& (xy\ol{a}b\underline{\ol{x}\ol{y}a)\cdot a \cdot (xy\ol{a}}\ol{b}\ol{x}\ol{y}a)\cdot b \cdot (xy\ol{a}\underline{b\ol{x}\ol{y}}\underline{a) \cdot \ol{b} \cdot ab\ol{a}} \\
=& xy\underline{\ol{a}b\cdot (a)\cdot\ol{b}}\ol{x}\ol{y}a b  \underline{xy\ol{a}\cdot( {}_b(\ol{x}){}_b(\ol{y})}\hspace{.1em}\underline{b)(a^2b\ol{a}^2)} \\
=& xy\cdot (\ol{a}^2ba)\cdot \ol{x}\ol{y}\underline{ab \cdot (\ol{a}x \ol{b}}y\cdot {}_b(\ol{x}))\cdot (\Delta_{a,b}\ol{a}^4) \\
=& xy\ol{a}^2b a \ol{x}\ol{y}\cdot (\ol{b}a\ol{x}b \underline{x) \cdot y {}_b(\ol{x})} \Delta_{a,b}\ol{a}^4 \\
=& xy\ol{a}^2b a \ol{x}\underline{\ol{y}\ol{b}a\ol{x}b \cdot (y \ol{b}}x)\cdot\Delta_{a,b}\ol{a}^4 \\
=& xy\ol{a}^2  b a\ol{x}\cdot (\ol{b}\ol{y}\underline{\ol{b}ab}\hspace{.1em} \underline{\ol{b} \ol{x}b}y) \cdot x\Delta_{a,b}\ol{a}^4 \\
=& xy\ol{a}^2  b a\ol{x} \underline{\ol{b}\ol{y}\cdot (aba}\ol{a}^2x\ol{b} \ol{x})\cdot y x\Delta_{a,b}\ol{a}^4 \\
=& xy\ol{a}^2  b a\ol{x}\cdot (y\ol{b}\ol{y} \underline{a}b)\cdot \ol{a}^2x\ol{b}\ol{x}yx\Delta_{a,b}\ol{a}^4 \\
=& xy\ol{a}^2  \underline{b a\ol{x} y\ol{b}\ol{y}\cdot (x\ol{a}\ol{b}}\hspace{.1em} \underline{ba^2\ol{x})\cdot b \ol{a}^2x\ol{b}}\ol{x}y x\Delta_{a,b}\ol{a}^4 \\
=& xy\ol{a}^2 \cdot {}_{b a\ol{x}y}(\ol{b})\cdot {}_{ba^2\ol{x}}(b)\cdot \ol{x}y x\Delta_{a,b}\ol{a}^4,
\end{align*}
}
where we denote the product $(ab)^3$ by $\Delta_{a,b}$.
We can easily check that the curves ${}_{b a\ol{x}y}(b)$ and ${}_{ba^2\ol{x}}(b)$ are as shown in Figure~\ref{F:scc_neigh_abc_complicated}. 
\begin{figure}[htbp]
\subfigure[The curve ${}_{b a\ol{x}y}(b)$.]{\includegraphics[width=37mm]{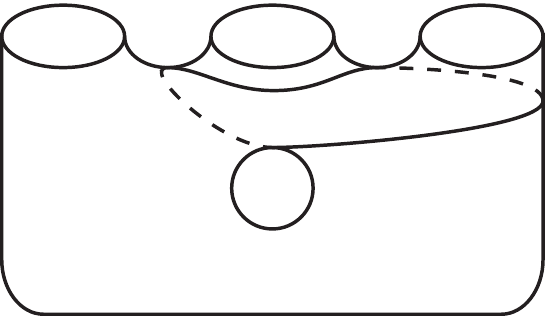}
\label{F:scc_neigh_abc2}}
\subfigure[The curve ${}_{ba^2\ol{x}}(b)$.]{\includegraphics[width=37mm]{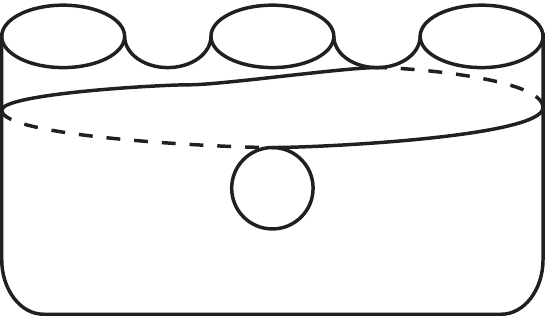}
\label{F:scc_neigh_abc3}}
\caption{Simple closed curves in a neighborhood of $a\cup b\cup c$.}
\label{F:scc_neigh_abc_complicated}
\end{figure}
These curves are disjoint from the curve $a$, in particular the Dehn twists along them are contained in $\Mod(\Sigma)(a)$. 
The map $\Phi_a$ sends the Dehn twists along ${}_{b a\ol{x}y}(b)$ and ${}_{ba^2\ol{x}}(b)$ to $t_{\delta_2}$ and $t_{\delta_3}$, respectively. 
We can thus calculate the monodromy $\Phi_a({}_c(a){}_b(c){}_a(b))$ as follows: 
{\allowdisplaybreaks
\begin{align*}
& \Phi_a({}_c(a){}_b(c){}_a(b)) = \Phi_a(xy\ol{a}^2 \cdot {}_{b a\ol{x}y}(\ol{b})\cdot {}_{ba^2\ol{x}}(b)\cdot \ol{x}y x\Delta_{a,b}\ol{a}^4) \\
=& \delta_3\delta_1 \ol{\delta_2}\delta_3\ol{\delta_3}\delta_1 \delta_3 = \delta_1^2\ol{\delta_2}\delta_3^2.
\end{align*}
}
This completes the proof.
\end{proof}

Meier \cite{Meier} constructed a $(3g,g)$--trisection of the spun manifold $\mathcal{S}(M)$ of a $3$--manifold $M$ with a genus--$g$ Heegaard splitting. 
He also gave an algorithm to obtain a trisection diagram of it from a Heegaard diagram of $M$. 
Using Theorem~\ref{T:descriptionSTviaMCG}, together with this algorithm, we can prove: 

\begin{theorem}\label{T:Meier's ex are simplified}

Meier's trisection on $\mathcal{S}(M)$ is diffeomorphic to one associated with a simplified trisection. 

\end{theorem}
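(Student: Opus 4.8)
The plan is to realize Meier's diagram explicitly as the diagram of a simplified trisection, using the criterion of Theorem~\ref{T:descriptionSTviaMCG}. Recall that the diffeomorphism type of a trisection is determined by its diagram up to diffeomorphisms and handle-slides. Hence it suffices to produce a simplified $(3g,g)$--trisection $f'$ whose associated diagram, in the sense of Proposition~\ref{T:relation vc diagram}, is handle-slide equivalent to Meier's diagram $(\Sigma_{3g};\alpha,\beta,\gamma)$ obtained from a genus--$g$ Heegaard diagram $(\Sigma_g;\underline{\alpha},\underline{\beta})$ of $M$. By the converse part of Theorem~\ref{T:descriptionSTviaMCG}, such an $f'$ exists as soon as we exhibit systems of curves satisfying the three conditions there, and Proposition~\ref{T:relation vc diagram} then guarantees that its diagram is the prescribed one.

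First I would recall Meier's algorithm \cite{Meier} and write the three systems $\alpha,\beta,\gamma\subset\Sigma_{3g}$ explicitly in terms of $\underline{\alpha},\underline{\beta}$. Reading the recipe of Proposition~\ref{T:relation vc diagram} backwards, I would then define candidate vanishing cycles $a_1,\dots,a_{3g}$, $b_1,\dots,b_{2g}$, $c_1,\dots,c_{2g}$: set $a_i=\alpha_i$; recover $b_j,c_j$ from $\beta_j,\gamma_j$ by undoing upper-triangular handle-slides for $j\le 2g$; and set $\alpha_l=\beta_l=\gamma_l$ for the last $g$ indices $l=2g+1,\dots,3g$. These formulas make sense for any diagram, and by the forward direction of Proposition~\ref{T:relation vc diagram} the curves so defined reproduce $(\Sigma_{3g};\alpha,\beta,\gamma)$ up to handle-slides. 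Thus the entire problem reduces to verifying that this candidate system satisfies conditions (1)--(3) of Theorem~\ref{T:descriptionSTviaMCG}.

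The verification of condition (1), that $b_i$ and $c_i$ each meet $a_i$ in a single point, should be immediate from the local form of the curves in Meier's diagram. The heart of the argument lies in conditions (2) and (3), which concern the monodromies $\mu_i\in\Mod(\Sigma_{a_1,\dots,a_i})$. I would compute these inductively by Lemma~\ref{T:mu is monodromy}: at each stage where the inner monodromy $\mu_i$ is trivial, Lemma~\ref{T:monodromyalongtriangle} expresses $\mu_{i+1}$ as an explicit product $t_{\delta_1}^{\pm2}t_{\delta_3}^{\pm2}t_{\delta_2}^{\mp1}$ of Dehn twists in a neighborhood of $a_{i+1}\cup b_{i+1}\cup c_{i+1}$, whose configuration I read off from the diagram. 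The expected mechanism is that the accumulated monodromy along the inner fold circles remains trivial at the decisive stages, reflecting the trivial ``spinning direction'' of $\mathcal{S}(M)$, so that the intersection requirement in condition (2) holds at each step and, after the relevant surgery homomorphisms $\Phi_{a_l}$ are applied, the monodromy $\mu_{2g}$ is carried to a mapping class preserving each of $a_{2g+1},\dots,a_{3g}$, yielding condition (3).

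I expect the main obstacle to be establishing precisely this triviality of the accumulated monodromy: tracking how each $\Phi_{a_i}$ transforms the product of Dehn twists supplied by Lemma~\ref{T:monodromyalongtriangle}, and showing that the twists arising from the ``diagonal'' curves of Meier's construction cancel against those coming from $\underline{\alpha}$ and $\underline{\beta}$. This is the only step that is not formal, since it must exploit the specific combinatorics of the spun diagram rather than any general structural principle; once the cancellation is verified, conditions (2) and (3) follow and the proof is complete.
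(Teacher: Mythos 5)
You follow the same skeleton as the paper: reduce the theorem, via Proposition~\ref{T:relation vc diagram} and the converse part of Theorem~\ref{T:descriptionSTviaMCG}, to checking conditions (1)--(3) for curves read off from Meier's diagram (the paper fixes a Heegaard diagram of $M$ with $\varepsilon$--curves in standard position, so that these curves are completely explicit and local). But your proposal stops exactly where the proof begins. The verification of conditions (2) and (3) --- which you yourself call ``the only step that is not formal'' --- is deferred to an expected ``cancellation'' that is never carried out. Since everything else in the argument is formal bookkeeping, what you have is a plan rather than a proof.

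Moreover, the mechanism you anticipate is not the one that actually works, so the gap is not merely one of omitted routine detail. The accumulated monodromy is \emph{not} trivial at every decisive stage: the paper's inductive Claim is that it alternates, with $\mu_{2k-1}$ equal to the Dehn twist along $t_{\beta_{2k}}(\alpha_{2k})$ and only the even-index monodromies $\mu_{2k}$ trivial. The odd step is the easy one and is exactly your mechanism: since $\mu_{2k-2}=1$, Lemma~\ref{T:monodromyalongtriangle} applies to $\alpha_{2k-1}\cup\beta_{2k-1}\cup\gamma_{2k-1}$, one boundary component of whose neighborhood bounds a disk while the other two are isotopic to $t_{\beta_{2k}}(\alpha_{2k})$, giving $\mu_{2k-1}=t_{t_{\beta_{2k}}(\alpha_{2k})}$. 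At the even step, however, the inner monodromy is this nontrivial twist, so Lemma~\ref{T:monodromyalongtriangle} does not apply at all; one must first check that $\gamma'=\mu_{2k-1}^{-1}(\gamma_{2k})$ still meets $\beta_{2k}$ in one point (condition (2)), and then show directly that $\Phi_{\alpha_{2k}}\bigl(t_{t_{\gamma_{2k}}(\alpha_{2k})}\circ\mu_{2k-1}\circ t_{t_{\beta_{2k}}(\gamma')}\circ t_{t_{\alpha_{2k}}(\beta_{2k})}\bigr)$ is trivial, which the paper does by an explicit word computation in the mapping class group (using braid relations and the fact that $t_{\alpha_{2k}}\in\Ker(\Phi_{\alpha_{2k}})$), rewriting the product into the form $t_{\alpha_{2k}}^{-2}(t_{\alpha_{2k}}t_{\gamma_{2k}})^3(t_{\alpha_{2k}}t_{\beta_{2k}})^3t_{\alpha_{2k}}^{-3}$, whose image under $\Phi_{\alpha_{2k}}$ is the identity. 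This alternating structure and the even-step computation are the substance of the proof that your proposal leaves missing; once they are in place, condition (3) does follow for free, since $\mu_{2g}=1$.
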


\begin{proof}
Let $(\Sigma_g;\delta,\varepsilon)$ be a genus--$g$ Heegaard diagram of $M$ such that $\varepsilon$--curves are in the standard position (see Figure~\ref{F:Heegaard diagram standard}).
\begin{figure}[htbp]
\includegraphics[height=22mm]{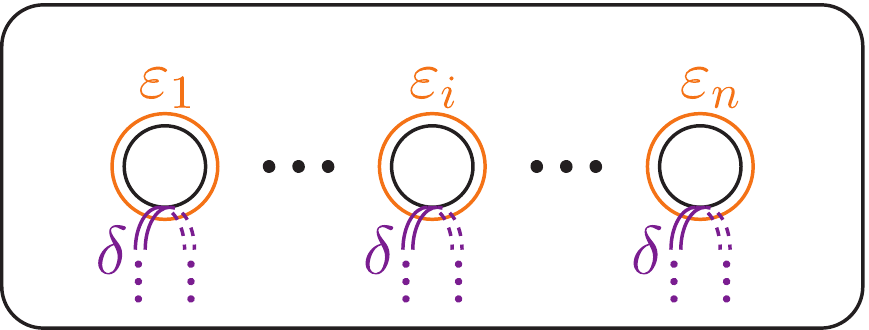}
\caption{A standard Heegaard diagram.}
\label{F:Heegaard diagram standard}
\end{figure}
According to \cite[Theorem 1.4]{Meier}, a trisection diagram of the spun manifold $\mathcal{S}(M)$ can be obtained by replacing a neighborhood of each $\varepsilon_i$ in the Heegaard diagram $(\Sigma_g;\delta,\varepsilon)$ (which is shown in Figure~\ref{F:modificaion HD1}) with a genus--$3$ surface and curves given in Figures~\ref{F:modificaion HD2}, \ref{F:modificaion HD3} and \ref{F:modificaion HD4}, where $\alpha_\delta = \{\alpha_{\delta_1}, \ldots,\alpha_{\delta_g}\}$ and $\alpha_{\delta_j}$ coincides with $\delta_j$ outside of the union of neighborhoods of $\varepsilon_i$'s. 
We will show that the curves $\alpha_1,\ldots,\alpha_{2g},\alpha_{\delta_1},\ldots,\alpha_{\delta_g},\beta_1,\ldots,\beta_{2g},\gamma_1,\ldots,\gamma_{2g}$ satisfy the conditions in Theorem~\ref{T:descriptionSTviaMCG}. 
The statement then follows immediately from Proposition~\ref{T:relation vc diagram}. 
\begin{figure}[htbp]
\subfigure[]{\includegraphics[height=26mm]{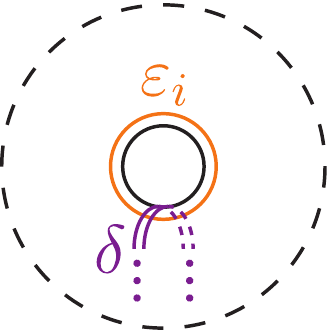}
\label{F:modificaion HD1}}
\subfigure[]{\includegraphics[height=26mm]{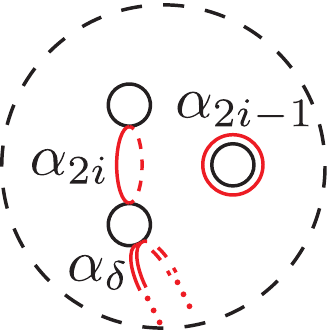}
\label{F:modificaion HD2}}
\subfigure[]{\includegraphics[height=26mm]{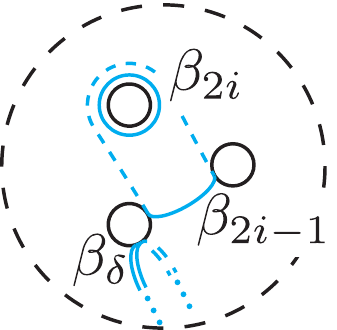}
\label{F:modificaion HD3}}
\subfigure[]{\includegraphics[height=26mm]{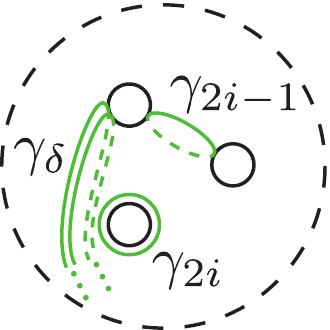}
\label{F:modificaion HD4}}
\caption{Curves in a trisection diagram of $\mathcal{S}(M)$. }
\label{F:modification HD}
\end{figure}
The first condition is obvious, while we can easily deduce the second and the third conditions from the following claim:

\begin{claim}

The element $\mu_i\in \Mod(\Sigma_{\alpha_1,\ldots,\alpha_{i}})$ determined from $\alpha_1,\ldots,\alpha_{2g},\beta_1,\ldots,\beta_{2g},\gamma_1,\ldots,\gamma_{2g}$ is equal to the Dehn twist along ${t_{\beta_{i+1}}(\alpha_{i+1})}$ if $i$ is odd, and equal to the identity if $i$ is even. 

\end{claim}

\noindent
The claim is obvious for $i=0$. 
Suppose that the claim holds for $i=2k-2$. 
Each two of the curves $\alpha_{2k-1},\beta_{2k-1},\gamma_{2k-1}$ intersect on one point. 
One of the boundary component of a regular neighborhood of $\alpha_{2k-1}\cup\beta_{2k-1}\cup\gamma_{2k-1}$ bounds a disk, while the other components are isotopic to ${t_{\beta_{2k}}(\alpha_{2k})}$. 
Thus, we can deduce from Lemma~\ref{T:monodromyalongtriangle} that $\mu_{2k-1}$ is equal to the Dehn twist along ${t_{\beta_{2k}}(\alpha_{2k})}$ (note that we can take orientations of $\alpha_{2k-1},\beta_{2k-1},\gamma_{2k-1}$ so that the algebraic intersections $\alpha_{2k-1}\cdot \beta_{2k-1}$, $\beta_{2k-1}\cdot\gamma_{2k-1}$ and $\gamma_{2k-1}\cdot\alpha_{2k-1}$ are all equal to $1$).

For simplicity, we denote $\alpha_{2k},\beta_{2k},\gamma_{2k}$ by $\alpha,\beta, \gamma$, respectively. 
It is easy to see that the curve $\gamma' = \mu_{2k-1}^{-1}(\gamma)$ intersects with $\beta$ on one point. 
Thus, we can calculate the monodromy $\mu_{2k}$ as follows (in the following calculation, the only underlined part in each line is changed when proceeding to the next line): 
{\allowdisplaybreaks
\begin{align*}
\mu_{2k}=& \Phi_{\alpha}\left(\underline{{}_{\gamma}(\alpha)\cdot \mu_{2k-1}\cdot {}_{\beta}(\gamma')\cdot {}_{\alpha}(\beta)}\right) \\
=& \Phi_{\alpha}\left(\underline{\gamma\alpha\overline{\gamma}}\cdot \beta\alpha\underline{\overline{\beta}\cdot \beta}\hspace{.1em}\underline{\gamma'}\hspace{.1em}\underline{\overline{\beta}\cdot \alpha\beta}\overline{\alpha}\right) \\
=& \Phi_{\alpha}\left(\underline{(\overline{\alpha}}\gamma\alpha)\cdot \beta\alpha\cdot (\underline{\overline{\mu_{2k-1}}}\gamma\underline{\mu_{2k-1}})\cdot (\alpha\beta\underline{\overline{\alpha}^2)}\right) \\
=& \Phi_{\alpha}\left(\gamma\alpha\underline{\beta\alpha\cdot (\beta\overline{\alpha}\overline{\beta})}\cdot \gamma\cdot (\beta\alpha\underline{\overline{\beta})\cdot \alpha\beta}\right) & \mbox{(Note that $\alpha\in \Ker(\Phi_{\alpha})$.)} \\
=& \Phi_{\alpha}\left(\underline{\gamma\alpha\cdot \alpha \cdot \gamma\beta\alpha\cdot (\alpha\beta\overline{\alpha})}\right) \\
=& \Phi_{\alpha}\left(\overline{\alpha}^2(\alpha\gamma)^3 (\alpha\beta)^3\overline{\alpha}^3\right) = 1. 
\end{align*}
}
This completes the proof of the claim. 
\end{proof}

\begin{remark}

In Section~\ref{S:trisection from BLF} we will also obtain a diagram of a simplified $(3,1)$--trisection of $L_p \cong \mathcal{S}(L(p,q))$ constructed from a genus--$1$ simplified broken Lefschetz fibration on it (see Example~\ref{Ex:simplified trisection genus-1SBLF}).
Although some of such simplified trisections (e.g.~those of $S^4$ and $S^1\times S^3\sharp S^2\times S^2$) are diffeomorphic to those constructed by Meier \cite{Meier}, the author does not know whether \emph{any} simplified trisection obtained from a genus--$1$ simplified broken Lefschetz fibration is diffeomorphic to that obtained from a genus--$1$ Heegaard splitting of some lens space. 

\end{remark}

Another application of Theorem~\ref{T:descriptionSTviaMCG} is the classification of $4$--manifolds admitting genus--$2$ simplified trisections:

\begin{theorem}\label{T:classificationgenus2ST}

A $4$--manifold $X$ admits a genus--$2$ simplified trisection if and only if $X$ is diffeomorphic to either $S^2\times S^2$ or a connected sum of $\CP$, $\CPb$ and $S^1\times S^3$ with two summands. 

\end{theorem}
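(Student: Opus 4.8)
The plan is to organize the argument according to the value of $k\in\{0,1,2\}$ of a genus--$2$ simplified $(2,k)$--trisection, and to show that these three cases realize respectively $\sharp^2(S^1\times S^3)$; $\CP\sharp(S^1\times S^3)$ and $\CPb\sharp(S^1\times S^3)$; and the four manifolds $S^2\times S^2$, $\CP\sharp\CPb$, $\CP\sharp\CP$ and $\CPb\sharp\CPb$. Since the union of these lists is precisely the set in the statement, it suffices to prove, for each $k$, that every genus--$2$ simplified $(2,k)$--trisection yields a manifold on the corresponding list and, conversely, that each manifold on that list admits such a trisection. The Euler characteristic and first Betti number of $X$ depend only on $(g,k)$ and already restrict the candidates to the indicated lists (in particular $b_1=0$ and $X$ simply connected when $k=0$); the content is to pin down the diffeomorphism type. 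Throughout I translate ``is a diagram of a simplified trisection'' into explicit conditions on the vanishing cycles via Theorem~\ref{T:descriptionSTviaMCG}, and I recover the honest trisection diagram $(\Sigma_2;\alpha,\beta,\gamma)$ from the vanishing cycles via Proposition~\ref{T:relation vc diagram}.

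For $k=2$ the vanishing-cycle data consists only of $a_1,a_2$, conditions (1)--(3) of Theorem~\ref{T:descriptionSTviaMCG} are vacuous or automatic, and Proposition~\ref{T:relation vc diagram} gives $\alpha=\beta=\gamma=(a_1,a_2)$; hence the diagram is standard and $X\cong\sharp^2(S^1\times S^3)$, which is conversely realized. For $k=1$ the data is $a_1,a_2,b_1,c_1$ with $a_1,b_1,c_1$ pairwise meeting once, and the only surviving constraint is condition (3), namely that $\mu_1$ fix $a_2$. The key step is to compute $\mu_1$ by Lemma~\ref{T:monodromyalongtriangle} and to observe that after surgering along $a_1$ everything lives on the torus $\Sigma_{a_1}$, for which $\Mod(\Sigma_{a_1})\cong SL(2,\Z)$; the requirement that $\mu_1$ fix the slope $a_2$, together with the sign dichotomy of Lemma~\ref{T:monodromyalongtriangle}, leaves exactly the two diagrams of $\CP\sharp(S^1\times S^3)$ and $\CPb\sharp(S^1\times S^3)$.

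The substantive case is $k=0$, with data $a_1,a_2,b_1,b_2,c_1,c_2$, where $a_1,a_2$ are disjoint non-separating curves and $a_1,b_1,c_1$ pairwise meet once. First I reduce to linear algebra on the torus: applying $\Phi_{a_1}$ passes to the genus--$1$ surface $T=\Sigma_{a_1}$, where isotopy classes of essential simple closed curves correspond to primitive classes (``slopes'') in $H_1(T;\Z)\cong\Z^2$ and geometric intersection number one becomes unimodularity (determinant $\pm1$) of a pair of slopes. The curves $a_2,b_2,c_2$, made disjoint from $a_1$ by upper-triangular handle-slides, descend to slopes on $T$, while Lemma~\ref{T:monodromyalongtriangle} computes $\mu_1$ explicitly as a product of powers of Dehn twists, i.e.\ an explicit element of $SL(2,\Z)$ determined by $a_1,b_1,c_1$. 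Condition (1) for $i=2$ then reads that $a_2$ is unimodular with each of $b_2,c_2$, and condition (2) for $i=1$ reads that $\mu_1^{-1}(c_2)$ is unimodular with $b_2$; this is a small system of unimodularity constraints on $a_2,b_2,c_2$ relative to the fixed matrix $\mu_1$. Solving it up to the permitted upper-triangular handle-slides and orientation-preserving self-diffeomorphisms of $\Sigma_2$ leaves finitely many normal forms, and for each I read off $(\Sigma_2;\alpha,\beta,\gamma)$ by Proposition~\ref{T:relation vc diagram} and identify the four-manifold. Since $b_2(X)=2$ here, the parity of the intersection form (even for $S^2\times S^2$, odd otherwise) together with its signature separates $S^2\times S^2$, $\CP\sharp\CPb$, $\CP\sharp\CP$ and $\CPb\sharp\CPb$, and each of the four is seen to occur.

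The hard part is the $k=0$ enumeration. The two delicate points are, first, controlling how the upper-triangular handle-slide freedom acts on the slope data so that the a priori infinite solution set of the unimodularity constraints genuinely collapses to finitely many normal forms; and second, correctly identifying the resulting four-manifold from each normal-form diagram rather than from its numerical invariants alone, so as to distinguish, for instance, $S^2\times S^2$ from $\CP\sharp\CPb$. The reduction to $SL(2,\Z)$ arithmetic on $\Sigma_{a_1}$ is exactly what makes this manageable, replacing a search over configurations of curves in $\Sigma_2$ by the elementary linear-algebra problem promised in the introduction; the remaining work is careful bookkeeping of signs, slopes and handle-slides.
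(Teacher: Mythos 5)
Your setup (case split by $k$, translation into vanishing-cycle conditions via Theorem~\ref{T:descriptionSTviaMCG}, slope arithmetic on the torus $\Sigma_{a_1}$, recovery of the diagram via Proposition~\ref{T:relation vc diagram}) agrees with the paper's, and your $k=2$ case and the realization direction are fine. But the core of your $k=0$ argument has a genuine gap: the claim that the unimodularity constraints collapse, modulo upper-triangular handle-slides and orientation-preserving diffeomorphisms of $\Sigma_2$, to \emph{finitely many} normal forms is false at the level you are working. Concretely, in the case where $\overline{\Sigma_2\setminus\Sigma'}$ is an annulus plus a disk and $\psi=t_d$, normalize $d=\smat{1\\0}$ and use the $t_d$--freedom to arrange $a_2=\smat{0\\1}$; the conditions then force $b_2=\smat{-1\\s}$ and $c_2=\smat{-1\\u}$ with $-u+s+us=\pm1$, which admits the infinite family $s=1$, $u\in\Z$ arbitrary, and the residual stabilizer of the pair of slopes $(d,a_2)$ in $SL(2,\Z)$ is $\pm I$, so no further normalization is available. (Slides of $b_2$ over $b_1$, or $c_2$ over $c_1$, are \emph{not} upper-triangular slides, so they cannot be invoked to kill $u$ at the vanishing-cycle level.) The collapse of this infinite family to a single diffeomorphism type is precisely what must be proved, and slope bookkeeping cannot do it. The paper handles exactly this case not with diagrams but with homotopy moves on the trisection map: it shows one of $q,s,u$ is $\pm1$, applies unsink to trade the monodromy twist for a fourth cusp, analyzes the four cusp vanishing cycles, and then applies flip and slip to obtain a cusp-free map exhibiting $X$ as an $S^2$--bundle over $S^2$. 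Nothing in your proposal substitutes for this step.

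The second gap is your identification mechanism. You rightly observe that numerical invariants alone are insufficient, but the only tool you then offer is the parity and signature of the intersection form. Those invariants distinguish $S^2\times S^2$, $\CP\sharp\CP$, $\CP\sharp\CPb$ and $\CPb\sharp\CPb$ \emph{from one another}, but they cannot show that the manifold of a given normal-form diagram is diffeomorphic to any of the four; that would require either explicit handle-slide equivalences to standard diagrams (the hard, unexecuted part of your plan) or smooth standardness, which is the theorem itself. The paper's proof never identifies a manifold from a diagram: in every case it reduces the \emph{map} — unwrinkle to exhibit $X$ as a blow-up of a manifold with a genus--$1$ trisection (this settles $k=1$ and the case where $\delta_1$ or $\delta_3$ bounds a disk), wrinkles to convert two disjoint fold circles into Lefschetz singularities (the case where one of $q,s,u$ vanishes), flip and slip for the $S^2$--bundle case, and an inequality ruling out $\psi=t_d^4$ — while citing Gay--Kirby for $k=2$. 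These singularity-theoretic moves are the missing ideas; without them, or a worked-out diagrammatic replacement for them, your proposal does not close.
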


\noindent
As we noted in the beginning of the section, we can deduce this theorem as merely a corollary of the classification of genus--$2$ general trisections in \cite{MZgenus2}. 
Although we only deal with simplified trisections, our proof below relies on easy linear-algebraic calculations, while that in \cite{MZgenus2} involves subtle arguments on configurations of curves in genus--$2$ surfaces.

\begin{proof}[Proof of Theorem~\ref{T:classificationgenus2ST}]
Since $S^1\times S^3, \CP$ and $\CPb$ admit genus--$1$ simplified trisections, any connected sum of them with two summands admits a genus--$2$ simplified trisection. 
Furthermore, we can deduce from \cite[Theorem 1.4]{BaykurSaeki} that $S^2\times S^2$ also admits a genus--$2$ simplified trisection. 
These observations prove the if part of the theorem, so we will prove the only if part. 

Let $f:X\to \R^2$ be a simplified $(2,k)$--trisection ($k=0,1,2$). 
The manifold $X$ is diffeomorphic to $\sharp^2(S^1\times S^3)$ if $k$ is equal to $2$ (see \cite[Remark 5]{GKtrisection}).
Assume that $k$ is not equal to $2$. 
We take vanishing cycles $a_1,a_2$ and $b_j,c_j$ ($j=1,\ldots,g-k$) in $\Sigma = \Sigma_2$ as we took in Theorem~\ref{T:descriptionSTviaMCG}. 
By changing the orientation of $X$ if necessary, we can assume that there are orientations of $a_1, b_1$ and $c_1$ such that the algebraic intersections $a_1\cdot b_1, b_1\cdot c_1$ and $c_1\cdot a_1$ are equal to $1$. 
Let $\Sigma'\subset \Sigma$ be a regular neighborhood of $a_1\cup b_1\cup c_1$, which is a genus--$1$ surface with three boundary components by Lemma~\ref{T:monodromyalongtriangle}. 
We denote the three boundary components of $\Sigma'$ by $\delta_1,\delta_2$ and $\delta_3$ as shown in Figure~\ref{F:vc_triangle2}. 
If $k=1$, we can deduce from Theorem~\ref{T:descriptionSTviaMCG} and Lemma~\ref{T:monodromyalongtriangle} that the product $t_{\delta_1}^2t_{\delta_3}^2t_{\delta_2}^{-1}$ (which is regarded as an element in $\Mod(\Sigma_{a_1})$) preserves the isotopy class of $a_2$. 
Since $\Sigma_{a_1}$ is a torus, it is easy to see that $a_2$ is disjoint from the curves $\delta_1,\delta_2,\delta_3$, and thus from $a_1,b_1,c_1$. 
We can therefore change $f$ by homotopy so that the innermost triangle in $f(\Crit(f))$ is moved to the outermost region (i.e.~the region bounded by the definite fold image). 
We can then apply unwrinkle to the resulting map, which yields a Lefschetz singularity with a trivial vanishing cycle. 
Thus, the manifold $X$ is a blow-up of a manifold admitting $(1,1)$--trisection, which is diffeomorphic to $S^1\times S^3$. 

In what follows, we assume that $k=0$.
Since the genus of $\Sigma$ is $2$, we can obtain $\Sigma$ by capping $\Pa \Sigma'$ by either (1) a genus--$1$ surface with one boundary component and two disks, or (2) an annulus and a disk. 
For the case (1), either of the components $\delta_1$ and $\delta_3$ bounds a disk in $\Sigma$. 
In particular we can apply unwrinkle to $f$ so that the inner indefinite fold image of $f$ becomes a Lefschetz critical value with a trivial vanishing cycle. 
Thus $X$ is a blow-up of a $4$--manifold admitting a $(1,0)$--trisection, which is either $\CP$ or $\CPb$. 

Assume that the components $\delta_1,\delta_2, \delta_3$ bound an annulus and a disk in $\Sigma$. 
By Lemma~\ref{T:monodromyalongtriangle} the monodromy $\psi\in \Mod(\Sigma_{a_1})$ along the loop going between the inner and the outer fold images of $f$ is a (single or fourth power of) Dehn twist along an essential simple closed curve $d$. 
We take an identification of $H_1(\Sigma_{a_1};\Z)$ with $\Z^2$ so that $d$ represents the element $\smat{1 \\ 0}$. 
Let $\smat{p \\ q}, \smat{r\\s}$ and $\smat{t \\u}\in \Z^2$ be elements represented by $a_2,b_2$ and $c_2$, respectively. 
Since $a_2$ intersects each of the curves $b_2$ and $c_2$ on one point, the following equality holds:
\begin{equation}\label{Eq:condition_intersection}
ps-qr= pu-qt = 1 \Longleftrightarrow \begin{pmatrix}
s & -r \\
u & -t
\end{pmatrix}\begin{pmatrix}
p \\ q
\end{pmatrix} = \begin{pmatrix}
1\\1
\end{pmatrix}. 
\end{equation}
If one of the integers $q,s,u$ is equal to zero, one of the curves $a_2,b_2$ and $c_2$ is disjoint from the three curves $a_1, b_1$ and $c_1$. 
Thus we can apply a homotopy to $f$ so that the nested indefinite fold images of $f$ becomes two circles bounding disjoint disks as shown in Figure~\ref{F:homotopy_disjoint}.
\begin{figure}[htbp]
\includegraphics[width=100mm]{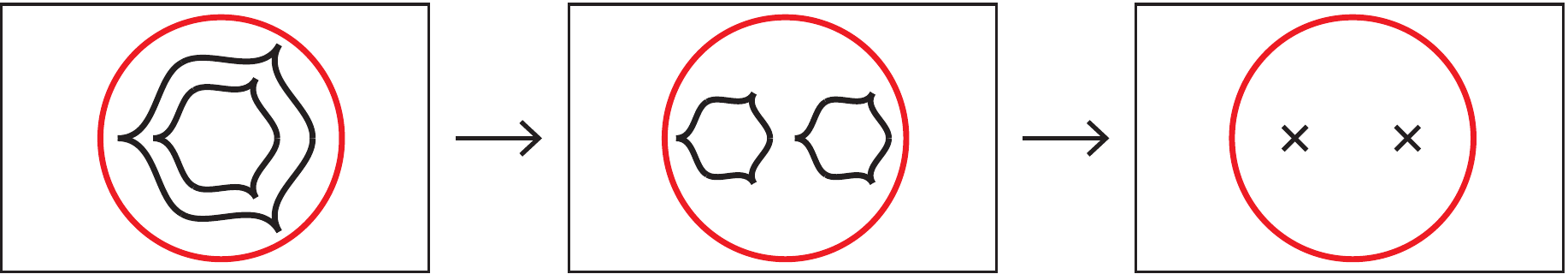}
\caption{Homotopies applied to a genus--$2$ simplified trisection.}
\label{F:homotopy_disjoint}
\end{figure}
Since a regular fiber inside each of the two circles is a genus--$1$ surface, we can further apply wrinkles so that the two indefinite circles become two (possibly achiral) Lefschetz singularities. 
We can thus conclude that the total space of the original trisection is a connected sum of $\CP$ and $\CPb$ with two summands. 
In what follows we will assume $q,s,u \neq 0$. 

If $\psi$ is equal to $t_d$ (i.e.~$\delta_2$ is a boundary component of an annulus in $\overline{\Sigma\setminus \Sigma'}$), we can apply unwrinkle to $f$ so that the inner indefinite fold circle becomes a Lefschetz singularity with a vanishing cycle $d$ (see the first two figures in Figure~\ref{F:homotopy_unwrinkle}). 
\begin{figure}[htbp]
\includegraphics[width=100mm]{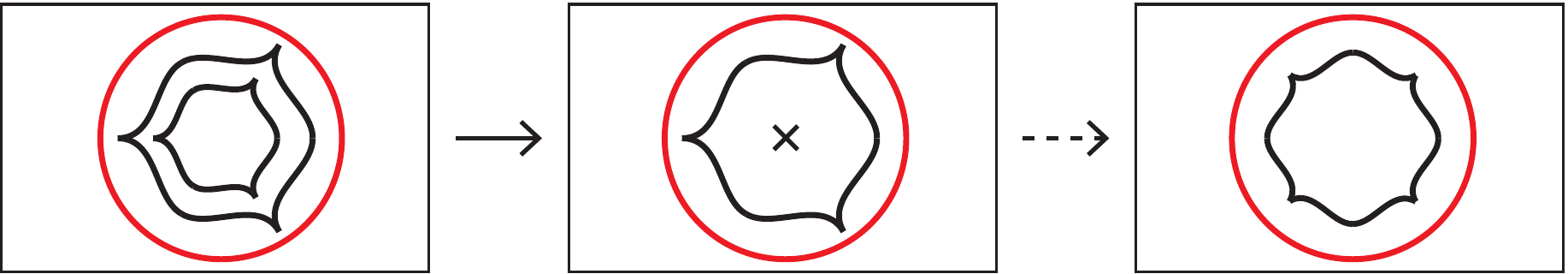}
\caption{Homotopies applied to a genus--$2$ simplified trisection.
Note that the second one can be applied only if one of the integers $q,s$ and $u$ is equal to $\pm 1$.}
\label{F:homotopy_unwrinkle}
\end{figure}
Since the curve ${}_{\overline{d}}(c_2) = \smat{t-u\\u}$ intersects $b_2=\smat{r\\s}$ on one point by Theorem~\ref{T:descriptionSTviaMCG}, we obtain $ur-(t-u)s=\pm 1$ and $ur-ts = -us\pm1$. 
Suppose that $ts-ur = -\det \smat{s&-r\\u&-t}$ is not equal to $0$ and $|q|$ is greater than $1$. 
We can deduce the following inequality from the equality \eqref{Eq:condition_intersection}: 
{\allowdisplaybreaks
\begin{align*}
&2 \leq |q| = \frac{|s-u|}{|-us\pm 1|}\\
\Rightarrow& 2(|us|-1) - (|u|+|s|) \leq 0 \\
\Rightarrow& \left(|s|-\frac{1}{2}\right)\left(|u|-\frac{1}{2}\right) \leq \frac{5}{4}. 
\end{align*}
}
The last inequality implies that either $|s|$ or $|u|$ is equal to $1$ (note that we assume $s,u\neq 0$).  
We can thus conclude that one of the integers $q,s, u$ is equal to $\pm 1$ if $\det \smat{s&-r\\u&-t}$ is not equal to $0$. 
If $\det \smat{s&-r\\u&-t}$ is equal to $0$, we can deduce from the equality \eqref{Eq:condition_intersection} that $\smat{s\\u}$ is equal to $\pm \smat{ 1 \\ 1}$. 

In any case, one of the curves $a_2, b_2$ and $c_2$ intersects $d$ on one point. 
We can thus apply unsink as shown in Figure~\ref{F:homotopy_unwrinkle}. 
Let $d_1,d_2,d_3$ and $d_4$ be vanishing cycles of indefinite fold arcs between two of the four cusps. 
Since two consecutive cycles $d_i$ and $d_{i+1}$ (taking indices mod $4$) intersect on one point, it is easy to deduce that either of the followings occurs for some $i \in \Z/4\Z$: 
\begin{itemize}

\item 
the cycles $d_i$ and $d_{i+2}$ are disjoint, 

\item 
the cycles $d_i$ and $d_{i+2}$ intersect on one point, 

\end{itemize}
In the former case, we can apply flip and slip to $f$ so that the resulting map $\tilde{f}$ does not have cusps. 
Let $D\subset \R^2$ be a open disk inside the innermost fold image of $\tilde{f}$. 
It is easy to see that the complement $X\setminus \tilde{f}^{-1}(D)$ admits a trivial bundle over $S^1$ with a fiber $S^2\times I$, and $X$ can be obtained from this bundle by attaching two copies of the trivial bundle $S^2\times D^2$ over $D^2$ by fiber-preserving diffeomorphisms. 
We can thus conclude that $X$ is an $S^2$--bundle over $S^2$. 

Lastly, we will prove that the monodromy $\psi$ never be equal to $t_d^4$ (i.e.~$\delta_2$ never bound a disk in $\overline{\Sigma\setminus \Sigma'}$) under the assumption that $q,s,u\neq 0$. 
To do this, suppose that $\psi$ would be equal to $t_d^4$.
The curve ${}_{\overline{d}^4}(c_2) = \smat{t-4u\\u}$ would intersect $b_2=\smat{r\\s}$ on one point by Theorem~\ref{T:descriptionSTviaMCG} and Lemma~\ref{T:monodromyalongtriangle}. 
Thus $\det \smat{s&-r\\u&-t}=-4us \pm 1$ would not be equal to $0$, and we obtain:
{\allowdisplaybreaks
\begin{align*}
&1 \leq |q| = \frac{|s-u|}{|-4us\pm 1|}\\
\Rightarrow& (4|us|-1) - (|u|+|s|) \leq 0 \\
\Rightarrow& \left(2|s|-\frac{1}{2}\right)\left(2|u|-\frac{1}{2}\right) \leq \frac{5}{4}. 
\end{align*}
}
This contradicts the assumption that $s$ and $u$ are not equal to $0$. 
\end{proof}

\section{Trisection diagrams from broken Lefschetz fibrations}\label{S:trisection from BLF}

In \cite{BaykurSaeki} the authors gave an explicit algorithm to obtain a simplified trisection from a directed broken Lefschetz fibration. 
In this section, we first explain how to obtain a trisection \emph{diagram} of a simplified trisection obtained by this algorithm. 
Although our method works for Lefschetz fibrations and directed broken Lefschetz fibrations, we will only focus on simplified broken Lefschetz fibrations for simplicity. 
We then apply this method to genus--$1$ simplified broken Lefschetz fibrations. 

We begin with a brief review of the algorithm in \cite{BaykurSaeki} mentioned above. 
Let $f:X\to S^2$ be a genus--$g$ simplified broken Lefschetz fibration with $k$ Lefschetz singularities. 
We first take a decomposition $S^2=D_1\cup D_2\cup D_3$, where $D_1$ is a disk including all the critical values of $f$, $D_3\subset S^2\setminus D_1$ is a small disk neighborhood of a regular value with a lower genus fiber and $D_2$ is an annulus between the two disks. 
We take identifications $D^2\cong S^1\times [-1,1]$ and $f^{-1}(D^2)\cong S^1\times [-1,1]\times \Sigma_{g-1}$ under which the restriction $f|_{f^{-1}(D_2)}$ is the projection onto the former components. 
Let $h:\Sigma_{g-1}\to [1,2]$ be a Morse function with $2g-2$ index--$1$ critical points, one index--$0$ critical point and one index--$2$ critical point. 
Using $h$ we define $\varphi:[-1,1]\times \Sigma_{g-1}\to [1,3]$ as $\varphi(t,x) = 1+(1-t^2)h(x)$\footnote{Here we take a different $\varphi$ from that in \cite{BaykurSaeki} in order to make it easy to take a vector field giving vanishing cycles.} and $f_1:X\to \R^2$ as follows: $f_1$ is a composition of $f|_{f^{-1}(D_i)}$ and a suitable diffeomorphism from $D_i$ to a unit disk $D^2\subset \R^2$ on $f^{-1}(D_i)$ for $i=1,3$, and is a composition of $\id_{S^1}\times \varphi$ and a suitable diffeomorphism from $S^1\times [1,3]$ to an annulus in $\R^2$ on $f^{-1}(D_2)\cong S^1\times [-1,1]\times \Sigma_{g-1}$. 
The critical value set of $f_1$ is as shown in Figure~\ref{F:homotopy_algorithmBS1}, where it has $2g-1$ outward-directed indefinite fold circles. 
We first apply $\mathrm{R2}$--moves twice to interchange the first and the second innermost circles, yielding the critical value set shown in Figure~\ref{F:homotopy_algorithmBS2}. 
The Lefschetz singularities can be moved to the outside of the innermost region by a homotopy, and the resulting critical value set is shown in Figure~\ref{F:homotopy_algorithmBS3}.
We can then apply flip and slip and unsink to obtain an indefinite fold circle with three cusps as shown in Figure~\ref{F:homotopy_algorithmBS5}. 
Finally, we can obtain a simplified trisection by applying wrinkles and pushing Lefschetz singularities inside successively. 
The resulting trisection has $2g-1$ indefinite fold circles without cusps and $k+2$ indefinite fold circles with three cusps, so it is a simplified $(2g+k+1,2g-1)$--trisection. 
\begin{figure}[htbp]
\subfigure[]{\includegraphics[width=40mm]{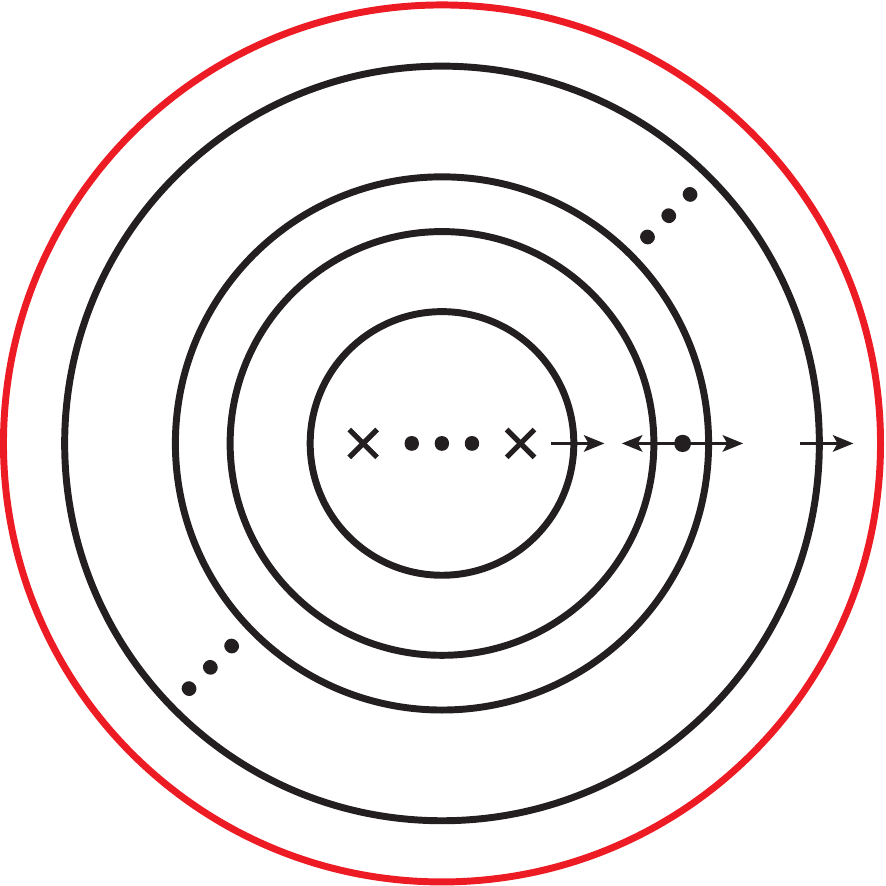}
\label{F:homotopy_algorithmBS1}}
\subfigure[]{\includegraphics[width=40mm]{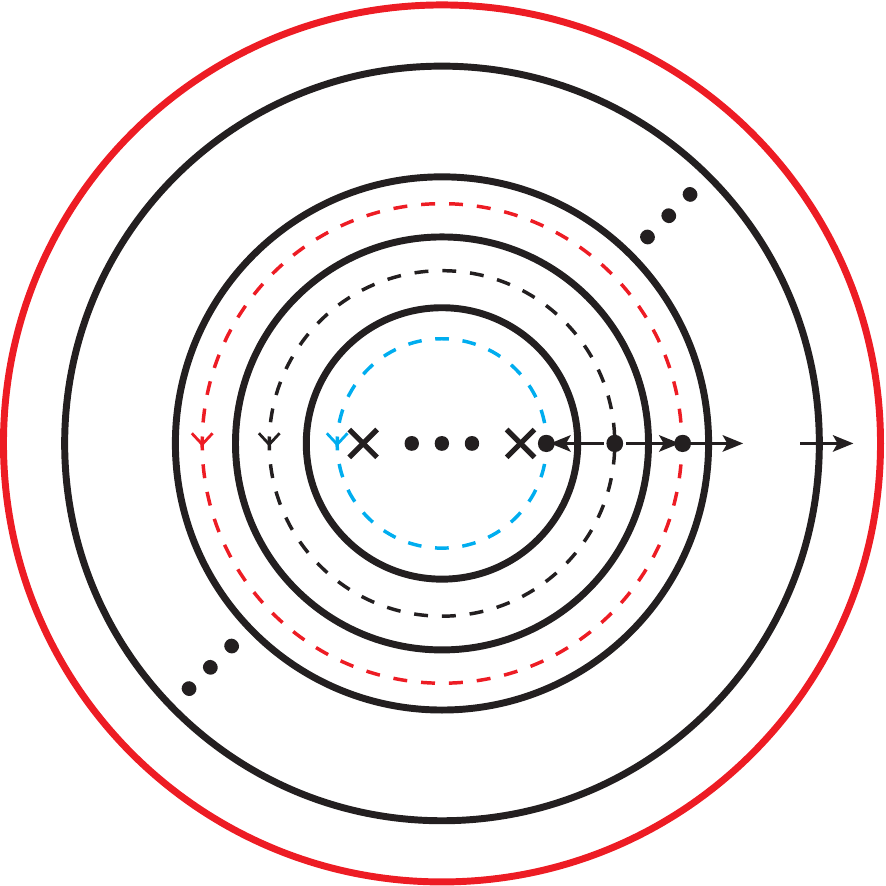}
\label{F:homotopy_algorithmBS2}}
\subfigure[]{\includegraphics[width=40mm]{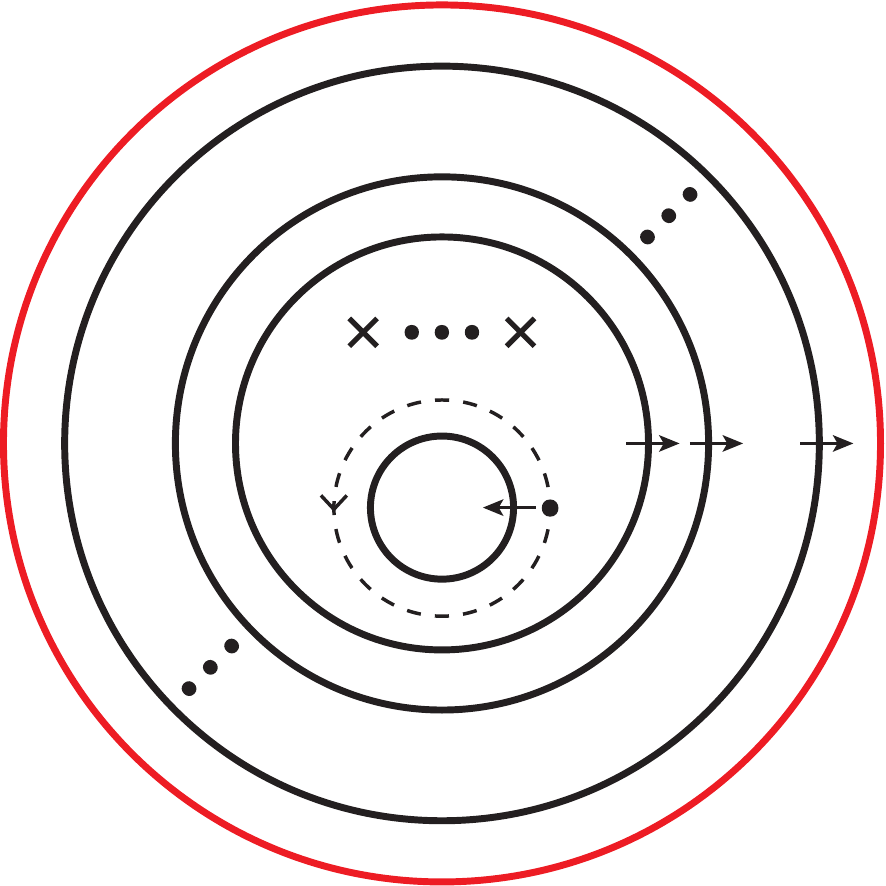}
\label{F:homotopy_algorithmBS3}}
\subfigure[]{\includegraphics[width=40mm]{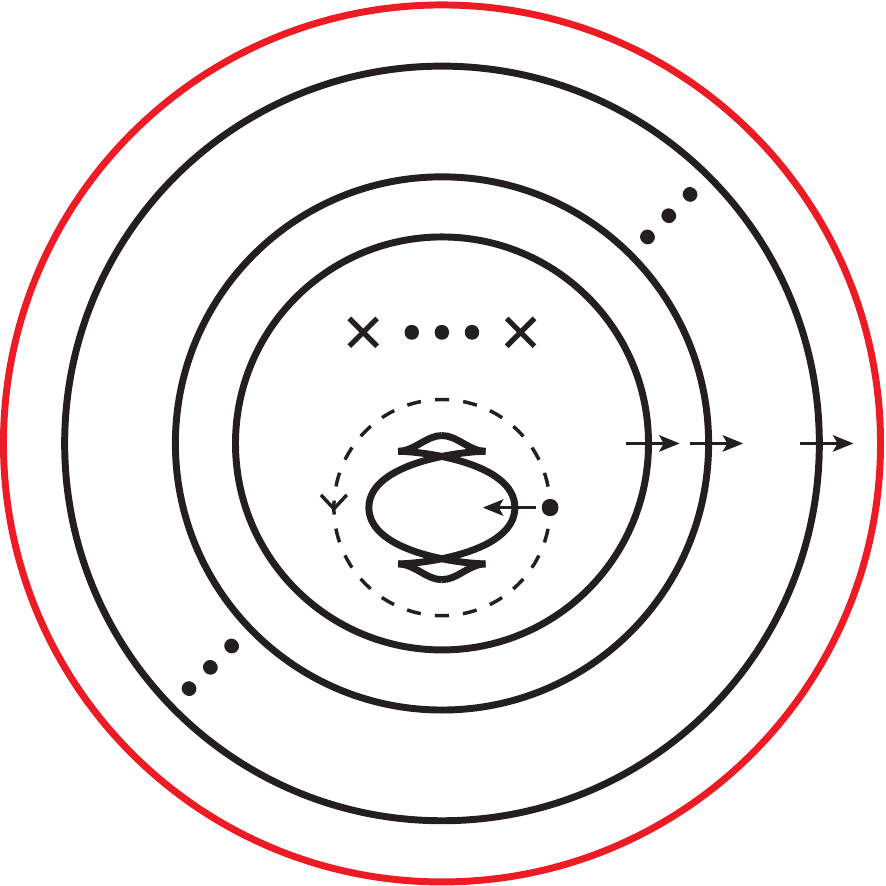}
\label{F:homotopy_algorithmBS4}}
\subfigure[]{\includegraphics[width=40mm]{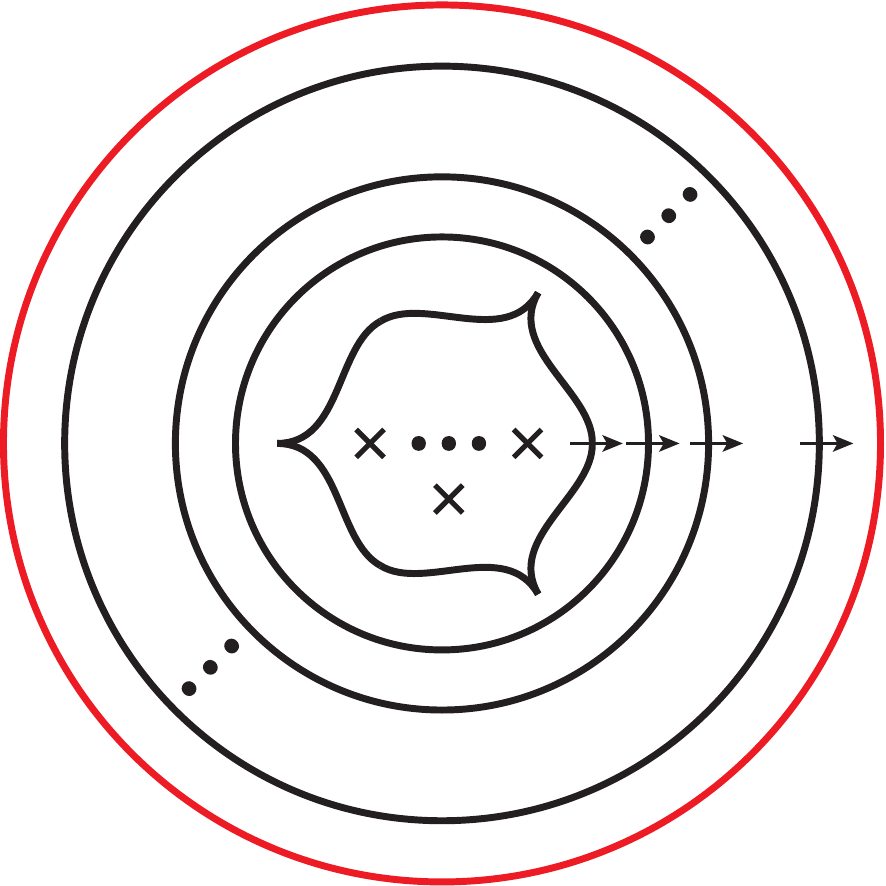}
\label{F:homotopy_algorithmBS5}}
\subfigure[]{\includegraphics[width=40mm]{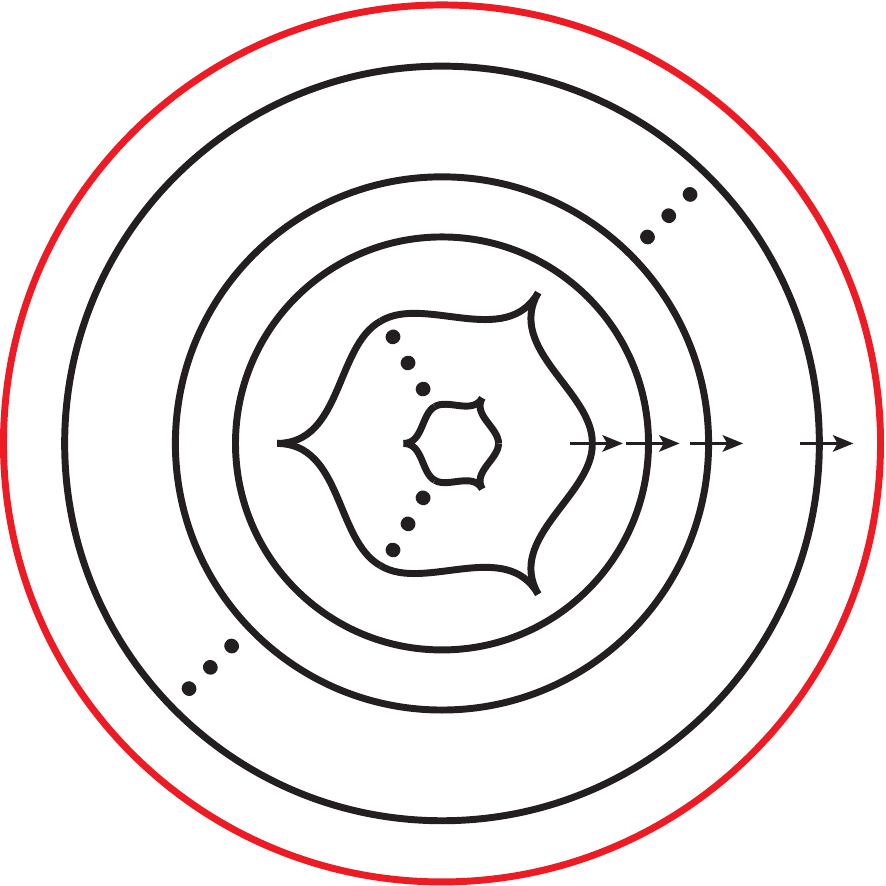}
\label{F:homotopy_algorithmBS6}}
\caption{Critical value sets appearing in the algorithm to obtain a trisection.}
\label{F:homotopy_algorithmBS}
\end{figure}

We denote the map appearing in the algorithm with critical value set Figures~\ref{F:homotopy_algorithmBS2},\ldots,\ref{F:homotopy_algorithmBS6} by $f_{2},\ldots,f_{6}$, respectively. 
In order to obtain a trisection \emph{diagram} associated with a simplified trisection constructed above, we have to get vanishing cycles of $f_1$ and know how these are changed in each of the homotopies applied in the algorithm. 
Let $G'$ be a Riemannian metric on $\Sigma_{g-1}$ so that the pair $(h,G')$ satisfies the Morse--Smale condition (for details of this condition, see \cite{BanyagaHurtubise}) and $G=(h(x)dt^2)\oplus G'$, which is a Riemannian metric on $[-1,1]\times \Sigma_{g-1}$.
We first determine stable and unstable manifolds of $\varphi$ with respect to the metric $G$. 

\begin{lemma}\label{T:stablemfd_phi}

Let $\varepsilon>0$ be a sufficiently small positive number. 

\begin{enumerate}

\item 
The fiber $\varphi^{-1}(2+\varepsilon)$ is diffeomorphic to a closed surface obtained by attaching two copies of $h^{-1}([1+\varepsilon,2])$ by the identity along the boundary. 

\item 
For the index--$0$ critical point $x_0\in \Crit(h)$, the intersection between $\varphi^{-1}(2+\varepsilon)$ and the stable manifold $W^s(0,x_0)$ of $(0,x_0)$ is the boundary of the two copies of $h^{-1}([1+\varepsilon,2])$ under the identification given in the proof of (1). 

\item 
For an index--$1$ critical point $x\in \Crit(h)$, the intersection between $\varphi^{-1}(2+\varepsilon)$ and the unstable manifold $W^u(0,x)$ of $(0,x)$ is the union of two copies of the intersection between $h^{-1}([1+\varepsilon,2])$ and the unstable manifold $W^u(x)$ of $x$ under the identification given in the proof of (1). 

\end{enumerate}

\end{lemma}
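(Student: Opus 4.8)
The plan is to compute the gradient of $\varphi$ with respect to $G$ explicitly and to observe that its flow decouples, which reduces all three statements to the Morse theory of $h$ on $\Sigma_{g-1}$. First I would record that the critical points of $\varphi$ are exactly the points $(0,x)$ with $x\in\Crit(h)$, and that the index of $\varphi$ at $(0,x)$ exceeds that of $h$ at $x$ by one; since $\varphi(0,x)=1+h(x)$, the point $(0,x_0)$ is an index--$1$ critical point with critical value $2$, each index--$1$ point $x$ of $h$ gives an index--$2$ critical point of $\varphi$ with critical value $1+h(x)>2+\varepsilon$ for small $\varepsilon$, and the index--$2$ point gives value $3$. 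A direct computation with $G=(h(x)\,dt^2)\oplus G'$ yields $\grad_G\varphi = -2t\,\partial_t + (1-t^2)\,\grad_{G'}h$, so the descending gradient flow is governed by $\dot t = 2t$ together with $\dot x = -(1-t^2)\grad_{G'}h(x)$. The crucial structural point is that the $t$--equation is independent of $x$, and that the $\Sigma$--component is the descending $h$--flow reparametrised by the positive factor $1-t^2$ on the interior $|t|<1$.

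For statement (1) I would simply solve $\varphi(t,x)=2+\varepsilon$, i.e. $(1-t^2)h(x)=1+\varepsilon$. This forces $h(x)\ge 1+\varepsilon$, so the projection to $\Sigma_{g-1}$ has image $h^{-1}([1+\varepsilon,2])$, and over each interior point there are exactly two solutions $t=\pm\sqrt{1-(1+\varepsilon)/h(x)}$, collapsing to $t=0$ over the boundary $h^{-1}(1+\varepsilon)$. Since $2+\varepsilon$ is a regular value of $\varphi$ for small $\varepsilon$ (the critical values lie in $1+h(\Crit h)$, the smallest being $2$) and the level set avoids $\{t=\pm1\}$, where $\varphi\equiv 1$, it is a closed surface; the two sheets $\{t\ge 0\}$ and $\{t\le 0\}$ are the two copies of $h^{-1}([1+\varepsilon,2])$, glued by the identity along $t=0$, as claimed.

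For (2) and (3) I would identify the invariant manifolds of $\varphi$ with suspensions of those of $h$. Because $\dot t=2t$ repels $t=0$ in forward time, any point converging to $(0,x_0)$ in forward time must satisfy $t\equiv 0$; on the slice $\{t=0\}$ the flow is exactly the descending $h$--flow, so $W^s(0,x_0)=\{0\}\times W^s_h(x_0)$, and intersecting with the level set gives $\{0\}\times\big(W^s_h(x_0)\cap h^{-1}(1+\varepsilon)\big)$. As $x_0$ is the minimum and $h^{-1}([1,1+\varepsilon])$ is a disk in its basin, the circle $h^{-1}(1+\varepsilon)$ lies entirely in $W^s_h(x_0)$, so the intersection is precisely the common boundary from (1). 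Dually, running the flow backward sends $t\to0$, so a point lies in $W^u(0,x)$ exactly when its $\Sigma$--component lies in $W^u_h(x)$; thus $W^u(0,x)$ is the suspension $\{(t,\xi):\xi\in W^u_h(x)\}$, and intersecting with $\varphi^{-1}(2+\varepsilon)$ doubles the arc $W^u_h(x)\cap h^{-1}([1+\varepsilon,2])$ across $t=0$, giving the two copies in the statement (and, as a sanity check, a circle, the expected vanishing cycle).

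The routine parts are the gradient computation and the level-set description in (1). The main obstacle is the rigorous identification of $W^s(0,x_0)$ and $W^u(0,x)$ with the suspensions of $W^s_h(x_0)$ and $W^u_h(x)$: one must control the reparametrising factor $1-t^2$ near the ends of the flow lines — it tends to $1$ as $t\to0$, so the asymptotics toward the critical point are unaffected, but this requires checking — and invoke the Morse--Smale hypothesis on $(h,G')$ to ensure these are embedded submanifolds meeting the regular level $\varphi^{-1}(2+\varepsilon)$ transversally. Once this suspension picture is in place, (2) and (3) follow from the same elementary level-set computation used for (1).
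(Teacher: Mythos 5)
Your proposal is correct and takes essentially the same route as the paper: both compute $\grad_G\varphi=-2t\,\partial_t+(1-t^2)\grad_{G'}h$, identify $\varphi^{-1}(2+\varepsilon)$ as the double of $h^{-1}([1+\varepsilon,2])$ folded along $t=0$, and reduce (2) and (3) to the Morse theory of $h$ by noting that the $t$--equation forces $t=0$ for the stable manifold of $(0,x_0)$ while the $\Sigma$--component is a time-reparametrized $h$--flow whose reparametrization stays tame as $t\to 0$. The only difference is cosmetic: the point you flag as requiring checking (control of the factor $1-t^2$ along flow lines) is handled in the paper by integrating the flow explicitly, namely $C^1_{(t,q)}(s)=t\exp(-2s)$ and $C^2_{(t,q)}(s)=c_q\bigl(s-\tfrac{t^2}{4}(1-\exp(-4s))\bigr)$, from which the asymptotics you need are immediate.
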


\begin{proof}
For simplicity, we denote the two copies of the surface $h^{-1}([1+\varepsilon,2])$ by $\Sigma_1'$ and $\Sigma_2'$. 
It is easy to see that the map $\Phi:\varphi^{-1}(2+\varepsilon) \to \Sigma_1'\cup_{\id}\Sigma_2'$ defined as
\[
\Phi(t,x) = \begin{cases}
x \in \Sigma_1' & (t\geq 0) \\
x \in \Sigma_2' & (t\leq 0) 
\end{cases}
\]
is a diffeomorphism, in particular the statement (1) holds. 
Let $c_p(s)$ be the integral curve of $\grad(h)$ with the initial point $p\in \Sigma_{g-1}$ and $C_{(t,q)}(s) = \left(C^1_{(t,q)}(s),C^2_{(t,q)}(s)\right)$ be the integral curve of $\grad(\varphi)$ with the initial point $(t,q)\in \varphi^{-1}(2+\varepsilon)$.
Since the gradient $\grad(\varphi)$ of $\varphi$ is equal to $-2t\frac{\Pa}{\Pa t} + (1-t^2)\grad(h)$, the components $C^1_{(t,q)}(s)$  and $C^2_{(t,q)}(s)$ are respectively equal to $t\exp\left(-2s\right)$ and $\D c_q\left(s-\frac{t^2}{4}\left(1-\exp(-4s)\right)\right)$. 
Thus, the point $(t,q) \in h^{-1}(1+\varepsilon)$ is contained in $W^s(0,x_0)$ if and only if $\D \lim_{s\to -\infty} c_q\left(s-\frac{t^2}{4}\left(1-\exp(-4s)\right)\right) = x_0$ and $\D \lim_{s\to -\infty}t\exp(-2s)=0$. 
We can deduce from the second equality that $t$ is equal to $0$. 
Since $\Phi(0,q)$ is contained in $W^s(x_0)$ (in particular $\D \lim_{s\to -\infty} c_q\left(s\right) = x_0$), the statement (2) holds. 
Similarly, for an index--$1$ critical point $x\in \Crit(h)$, the point $(t,q) \in h^{-1}(1+\varepsilon)$ is contained in $W^u(0,x)$ if and only if $\D \lim_{s\to \infty} c_q\left(s-\frac{t^2}{4}\left(1-\exp(-4s)\right)\right) = x$ and $\D \lim_{s\to \infty}t\exp(-2s)=0$. 
The second equality holds for any $t$ and the first equality implies that $q$ is contained in $W^u(x)$. 
This completes the proof of the statement (3). 
\end{proof}

By Lemma~\ref{T:stablemfd_phi} we can obtain vanishing cycles of the map $f_1$ in a regular fiber on the second innermost annulus (i.e.~the fiber on the dot in Figure~\ref{F:homotopy_algorithmBS1}):  
the blue curve in Figure~\ref{F:vcMorsevarphi}, which we denote by $c$, is a vanishing cycle of the second innermost fold circle, red curves are vanishing cycles of the outer fold circles, and the pair of the two shaded disks is a neighborhood of a vanishing set of the innermost fold circle. 
\begin{figure}[htbp]
\includegraphics[width=100mm]{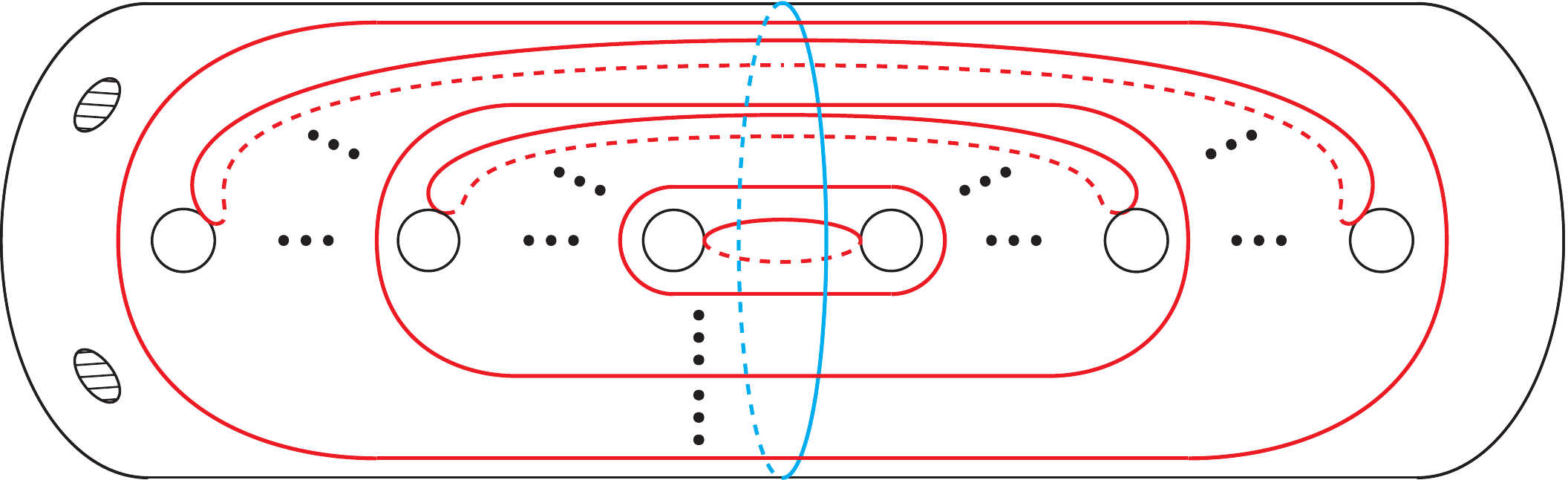}
\caption{Vanishing cycles of $f_1$.}
\label{F:vcMorsevarphi}
\end{figure}
A regular fiber of $f_2$ on the innermost annulus (i.e.~the fiber on the dot in Figure~\ref{F:homotopy_algorithmBS2}) can be obtained by applying surgery along the two shaded disks in Figure~\ref{F:vcMorsevarphi}. 
We denote this regular fiber by $\Sigma$. 
A vanishing cycle of the second innermost fold circle, which we denote by $d$, is parallel to the boundary of one of the shaded disks in Figure~\ref{F:vcMorsevarphi}. 
while the other vanishing cycles are the same as those of $f_1$. 

In order to get vanishing cycles of the maps $f_2,\ldots,f_6$, we need to determine a monodromy $\varphi_2\in \Mod(\Sigma)$ along a dotted black circle in Figure~\ref{F:homotopy_algorithmBS2}. 
Let $c_1,\ldots,c_k$ be vanishing cycles of the Lefschetz singularities with respect to some Hurwitz path system. 
We regard these cycles as curves in $\Sigma_c$, which is a surface obtained by applying surgery along $c\subset \Sigma$. 

\begin{proposition}\label{T:variety R2move tildef}

Suppose that $g$ is greater than $2$.
For any element $\varphi\in \Ker(\Phi_d)$ satisfying the condition $\Phi_c(\varphi) = t_{c_k}\circ\cdots \circ t_{c_1} \in \Mod(\Sigma_c)$, there exist $\mathrm{R2}$--moves applied to $f_1$ in the algorithm above such that the resulting monodromy $\varphi_2$ is equal to $\varphi$. 

\end{proposition}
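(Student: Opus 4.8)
My plan is to identify the monodromies realizable by varying the two $\mathrm{R2}$--moves as a single coset of an explicit subgroup of $\Mod(\Sigma)$, and then to show that this coset is exactly the set in the statement. Write $T = t_{c_k}\circ \cdots \circ t_{c_1}\in \Mod(\Sigma_c)$, let $S = \{\varphi\in \Mod(\Sigma) : \Phi_d(\varphi)=\id,\ \Phi_c(\varphi)=T\}$ be the target, and set $K = \Ker(\Phi_c)\cap \Ker(\Phi_d)$. First I would check that every choice of the two moves yields a monodromy in $S$, which both confirms the target and produces a basepoint. After the swap, crossing the fold with vanishing cycle $c$ from the innermost annulus reaches the region carrying the $k$ Lefschetz singularities, whose total monodromy is $T$, so the observation of Section~\ref{S:monodromy indeffold} forces $\Phi_c(\varphi_2)=T$; crossing the fold with vanishing cycle $d$ reaches a region with trivial monodromy, inherited from the product structure of $f_1$ over $D_2$, so $\Phi_d(\varphi_2)=\id$. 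Both identities are untouched by the moves, since these are local near $C_1\cup C_2$ and meet neither the Lefschetz region nor the outer region. Computing $\varphi_2$ for the default moves of the algorithm then exhibits a basepoint $\varphi_2^{0}\in S$, and since $\Phi_c,\Phi_d$ are homomorphisms, $S$ is exactly the coset $K\cdot \varphi_2^{0}$. It therefore suffices to show that by varying the two moves one can alter $\varphi_2$ by an arbitrary element of $K$.

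For this I would invoke \cite{HayanoR2}, which expresses the effect of an $\mathrm{R2}$--move on parallel transport in terms of mapping class groups: passing one indefinite fold through another is governed by a choice of embedded arc in the fiber, and the induced change of the monodromy around the innermost loop is an explicit mapping class (a Dehn twist along, or a point-push of a surgery-disk center along, the chosen arc). In parallel I would describe $K$ through the surgery homomorphisms: using $\Ker(\Phi_c^{\ast})=\langle t_c\rangle$ and $\Ker(\Phi_d^{\ast})=\langle t_d\rangle$ from Section~\ref{S:monodromy indeffold}, together with $\Phi_c=F\circ\Phi_c^{\ast}$ and $\Phi_d=F\circ\Phi_d^{\ast}$ for the forgetting maps $F$, the group $K$ should be generated by $t_c$, $t_d$, and the point-pushing maps of the surgery-disk centers that survive both surgeries. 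I would then match these generators one by one with variations of the two moves, each generator being produced by routing the arcs of \cite{HayanoR2} suitably. The hypothesis $g>2$ enters here to guarantee that $\Sigma$ has enough genus for all the required arcs and curves to exist---so that the relevant vanishing cycles are non-separating with connected complement and every twist and point-push can be dragged over the fiber.

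The main obstacle will be this surjectivity: pinning down $K$ exactly and realizing each of its generators as the variation of a concrete pair of moves. The two delicate points are the bookkeeping of the point-pushing subgroups coming from the forgetting maps in $\Phi_c$ and $\Phi_d$---so that the realized freedom is neither too small nor too large---and the interaction of the two successive moves, whose composite effect must be shown to fill out all of $K$ rather than a proper subgroup. Once surjectivity onto $K$ is established, every $\varphi\in S = K\cdot\varphi_2^{0}$ is realized by some choice of the two $\mathrm{R2}$--moves, which is the assertion of the proposition.
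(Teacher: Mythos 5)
Your overall skeleton --- observe that every realizable monodromy lies in the set $S$, note that $S$ is a single coset $K\cdot\varphi_2^{0}$ of $K=\Ker(\Phi_c)\cap\Ker(\Phi_d)$, and then show that varying the $\mathrm{R2}$--moves realizes all of $K$ --- is indeed the strategy of the paper (which runs the argument of \cite[Theorem 3.9]{HayanoR2} after establishing one group-theoretic claim). But your proposal breaks down at exactly the decisive step: the identification of generators of $K$. Your proposed generators $t_c$ and $t_d$ do not lie in $K$ at all. Here $c$ is the \emph{separating} curve along which the two copies of $h^{-1}([1+\varepsilon,2])$ are glued, and $d$ is the belt circle of the tube created by the surgery; hence $\Phi_d(t_c)$ is the Dehn twist along the image of $c$ in $\Sigma_d\cong\Sigma_{2g-2}$, an essential separating curve (it splits $\Sigma_d$ into two genus--$(g-1)$ pieces), so $\Phi_d(t_c)\neq 1$, and symmetrically $\Phi_c(t_d)$ is the twist along an essential non-separating curve in the genus--$g$ component of $\Sigma_c$. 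So your generating set is not even contained in $K$, and the plan of ``matching these generators one by one with variations of the two moves'' cannot be carried out. The correct statement, which is the paper's Claim and the real content of the proof, is that $K$ is generated by the \emph{composite} classes $t_{\tilde{\delta}(\eta)}\circ t_c^{-1}\circ t_d^{-1}$, where $\eta$ ranges over arcs in $\Sigma_{c,d}$ joining a $c$--surgery point to a $d$--surgery point; these composites are precisely the classes that variations of $\mathrm{R2}$--moves produce according to \cite{HayanoR2}, which is why the claim closes the argument.

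Your reading of the hypothesis $g>2$ is also off, in a way that matters. It is not about ``enough genus for the required arcs and curves to exist'' (and note that $c$ is separating, contrary to your assertion that the relevant vanishing cycles are non-separating with connected complement). The hypothesis enters in proving the Claim: one shows $\Phi_c^\ast\circ\Phi_d^\ast$ restricts to an isomorphism of $K$ onto $\Ker(F_{v_1,v_2})\cap\Ker(F_{w_1,w_2})$, and then identifies the latter with the kernel of $i_\ast:\pi_1(\Sigma'\setminus\{w_1,w_2\},v_1)\to\pi_1(\Sigma',v_1)$, where $\Sigma'$ is the genus--$(g-1)$ component of $\Sigma_{c,d}$ containing $v_1,w_1,w_2$. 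This identification requires the point-pushing map $\pi_1(\Sigma',v_1)\to\Mod(\Sigma';v_1)$ to be injective, which holds if and only if the genus of $\Sigma'$ is at least $2$, i.e.\ $g>2$. This is not a technical convenience: the paper notes the proposition is genuinely false without the assumption (which is why Propositions~\ref{T:variety R2move tildef small g} and \ref{T:variety R2move f_d smallg} replace it by statements about lifted monodromies using sections when $g\leq 2$), whereas your framing suggests the same argument would go through for small $g$ with more care.
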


\begin{proof}
We can prove the proposition in a way similar to that of the proof of \cite[Theorem 3.9]{HayanoR2} provided that the following claim holds:

\begin{claim}

The subgroup $\Ker(\Phi_c)\cap \Ker(\Phi_d) \subset \Mod(\Sigma)(c,d)$ is generated by the following set: 
\[
\left\{t_{\tilde{\delta}(\eta)}\circ t_c^{-1}\circ t_d^{-1}\in \Mod(\Sigma)(c,d)\left|~\begin{minipage}[c]{45mm}
$\eta\in \Pi\left(\Sigma_{c,d}\setminus \{v_i,w_j\},v_k,w_l\right)$

\vspace{.3em}

$\{i,k\}=\{j,l\}=\{1,2\}$
\end{minipage} \right. \right\}, 
\]
where $v_1, v_2\in \Sigma_{c,d}$ (resp.~$w_1, w_2\in \Sigma_{c,d}$) are the origins of the two disks attached in surgery along $c$ (resp.~$d$), and $\Pi\left(\Sigma_{c,d}\setminus \{v_i,w_j\},v_k,w_l\right)$ and $\tilde{\delta}(\eta)$ are defined in the same way as in \cite[Section 3]{HayanoR2}. 

\end{claim}

\noindent
Let $F_{v_1,v_2}$ and $F_{w_1,w_2}$ be the forgetting map defined on $\Mod(\Sigma_{c,d};v_1,v_2,w_1,w_2)$. 
To prove the claim, we first observe that the restriction 
\[
\Phi_c^\ast\circ \Phi_d^\ast|_{\Ker(\Phi_c)\cap \Ker(\Phi_d)}:\Ker(\Phi_c)\cap \Ker(\Phi_d)\to \Ker(F_{v_1,v_2})\cap \Ker(F_{w_1,w_2})
\]
is an isomorphism (see the proof of \cite[Lemma 3.1]{HayanoR2}). 
We denote the connected component of $\Sigma_{c,d}$ containing $v_1,w_1,w_2$ (resp.~$v_2$) by $\Sigma'$ (resp.~$\Sigma''$). 
It is easy to see that $\Ker(F_{v_1,v_2})\cap \Ker(F_{w_1,w_2})$ is contained in the kernel of $F_{v_1}:\Mod(\Sigma';v_1,w_1,w_2)\to \Mod(\Sigma';w_1,w_2)$, where we regard $\Mod(\Sigma';v_1,w_1,w_2)$ as a subgroup of the group $\Mod(\Sigma_{c,d};v_1,v_2,w_1,w_2)$ in the obvious way. 
Since $\Sigma'$ is not a sphere, in particular a connected component of $\Diff^+(\Sigma',w_1,w_2)$ is contractible, the kernel of $F_{v_1}:\Mod(\Sigma';v_1,w_1,w_2)\to \Mod(\Sigma';w_1,w_2)$ is isomorphic to $\pi_1(\Sigma'\setminus\{w_1,w_2\},v_1)$.  
The intersection $\Ker(F_{v_1})\cap \Ker(F_{w_1,w_2})$ is then isomorphic to the kernel of the map $i_\ast :\pi_1(\Sigma'\setminus\{w_1,w_2\},v_1) \to \pi_1(\Sigma',v_1)$ since the diagram
\[
\begin{CD}
\pi_1(\Sigma'\setminus\{w_1,w_2\},v_1)@>P_1>> \Mod(\Sigma';v_1,w_1,w_2) \\
@Vi_\ast VV @VVF_{w_1,w_2}V \\
\pi_1(\Sigma',v_1)@>P_2>> \Mod(\Sigma';v_1)
\end{CD}
\]
commutes and the pushing map $P_2:\pi_1(\Sigma',v_1)\to  \Mod(\Sigma';v_1)$ is injective (note that it would not hold if the genus of $\Sigma'$ were equal to $1$). 
The rest of the proof of the claim is quite similar to that of \cite[Theorem 3.4]{HayanoR2}, so we leave it to the reader. 
\end{proof}

\noindent
Note that Proposition~\ref{T:variety R2move tildef} would not hold without the assumption on $g$. 
In order to get a monodromy $\varphi_2$ of $f_2$ with small fiber genera, we need to take a section on the annulus bounded by the blue and red dotted circles in Figure~\ref{F:homotopy_algorithmBS2} which intersects with the higher-genus connected component of a regular fiber on the innermost region. 
Using such a section, we can take a lift $\tilde{\varphi}_2\in\Mod(\Sigma;x)$ of the monodromy $\varphi_2$, and we can prove the following in the same way as that of the proof of Proposition~\ref{T:variety R2move tildef}: 

\begin{proposition}\label{T:variety R2move tildef small g}

For any element $\tilde{\varphi}\in \Ker(\Phi_d:\Mod(\Sigma,x)(d)\to \Mod(\Sigma_d,x))$ satisfying the condition $\Phi_c(\tilde{\varphi}) = t_{c_k}\circ\cdots \circ t_{c_1} \in \Mod(\Sigma_c,x)$, there exist $\mathrm{R2}$--moves applied to $f_1$ in the algorithm above such that the resulting lift $\tilde{\varphi}_2$ is equal to $\tilde{\varphi}$. 

\end{proposition}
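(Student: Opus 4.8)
The plan is to run the proof of Proposition~\ref{T:variety R2move tildef} (hence of \cite[Theorem 3.9]{HayanoR2}) essentially verbatim, but with the marked point $x$ carried along throughout, so that every mapping class group is replaced by its once-pointed analogue. As in the unpointed case, the set of lifts $\tilde\varphi_2$ realizable by $\mathrm{R2}$--moves forms a coset of the subgroup $\Ker(\Phi_c)\cap\Ker(\Phi_d)\subset\Mod(\Sigma,x)(c,d)$, and the whole statement reduces to a pointed version of the Claim in the proof of Proposition~\ref{T:variety R2move tildef}: that this subgroup is generated by (the lifts of) the elements $t_{\tilde\delta(\eta)}\circ t_c^{-1}\circ t_d^{-1}$. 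Once that is in hand, any $\tilde\varphi\in\Ker(\Phi_d)$ with $\Phi_c(\tilde\varphi)=t_{c_k}\circ\cdots\circ t_{c_1}$ differs from one realizable lift by an element of this subgroup, and each generator corresponds to a prescribed pair of $\mathrm{R2}$--moves, exactly as before.

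First I would set up the pointed surgery homomorphisms $\Phi_c^\ast$ and $\Phi_d^\ast$ on $\Mod(\Sigma,x)(c,d)$; this is legitimate because $x$ lies in the complement of $c\cup d$ and is simply carried through each surgery. Following the proof of \cite[Lemma 3.1]{HayanoR2}, the composite $\Phi_c^\ast\circ\Phi_d^\ast$ restricts to an isomorphism from $\Ker(\Phi_c)\cap\Ker(\Phi_d)$ onto $\Ker(F_{v_1,v_2})\cap\Ker(F_{w_1,w_2})$, where now the forgetting maps are defined on the pointed group $\Mod(\Sigma_{c,d};v_1,v_2,w_1,w_2,x)$. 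I would then strip off the marked origins one at a time using the Birman exact sequence, identifying the successive kernels with point-pushing subgroups inside $\pi_1$ of the relevant component of $\Sigma_{c,d}$, with the remaining origins and the point $x$ deleted, exactly as the commuting square in the proof of Proposition~\ref{T:variety R2move tildef} does.

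The one genuinely new point, and the reason the hypothesis $g>2$ can be dropped, is the injectivity of the final point-pushing map. In the unpointed proof, injectivity of $P_2:\pi_1(\Sigma',v_1)\to\Mod(\Sigma';v_1)$ fails precisely when the component $\Sigma'$ (the one containing $v_1,w_1,w_2$) is a torus, and ruling this out is what forces $g>2$. Here the section was chosen to meet the higher-genus component of the innermost-region fiber, so that $x$ lies in $\Sigma'$; consequently the relevant pushing map becomes $\Push:\pi_1(\Sigma'\setminus\{x\},v_1)\to\Mod(\Sigma';v_1,x)$, that is, the pushing of $v_1$ in $\Sigma'$ with $x$ retained as a marked point. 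Since $\Sigma'$ is not a sphere and now carries the puncture $x$, the surface $\Sigma'\setminus\{x\}$ has strictly negative Euler characteristic even when $\Sigma'$ is a torus, so the Birman exact sequence guarantees injectivity with no assumption on $g$, and the reduction closes.

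The main obstacle I anticipate is the case-checking needed to confirm that the section really does place $x$ in the component $\Sigma'$, and that in every small-genus configuration of $c$, $d$ and the two components of $\Sigma_{c,d}$ the surfaces into which one pushes are non-exceptional (neither a sphere with few marked points nor a once-marked torus). Granting this, identifying the generators with $\mathrm{R2}$--moves and verifying the coset statement are word-for-word identical to \cite[Theorems 3.4 and 3.9]{HayanoR2} with the marked point adjoined, so I would conclude by citing those arguments just as in the proof of Proposition~\ref{T:variety R2move tildef}.
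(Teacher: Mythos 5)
Your proposal is correct and takes essentially the same approach as the paper, whose entire proof is the remark that the argument of Proposition~\ref{T:variety R2move tildef} goes through verbatim with the marked point $x$ (placed by the section in the higher-genus component of the innermost-region fiber) carried through all the mapping class groups, precisely so that the point-pushing map becomes injective at small genus. One caveat on your justification: when $g=1$ the component $\Sigma'$ is a sphere, so your claim that $\Sigma'\setminus\{x\}$ has strictly negative Euler characteristic fails there; the injectivity of $P_2$ still holds, but only for the degenerate reason that $\pi_1(\Sigma'\setminus\{x\})$ is trivial---exactly the sort of exceptional-case check you flagged as remaining.
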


We can regard the vanishing cycles $c_1,\ldots,c_k$ as curves in $\Sigma$ and these are also vanishing cycles of the Lefschetz singularities of $f_3$. 
As shown in \cite[Figure 6]{HayanoR2}, a regular fiber on the dot in the upper triangle in Figure~\ref{F:referencepaths_slip1}, which we denote by $\tilde{\Sigma}$, can be obtained by applying surgery on a pair of disks in $\Sigma$. 
\begin{figure}[htbp]
\subfigure[]{\includegraphics[width=35mm]{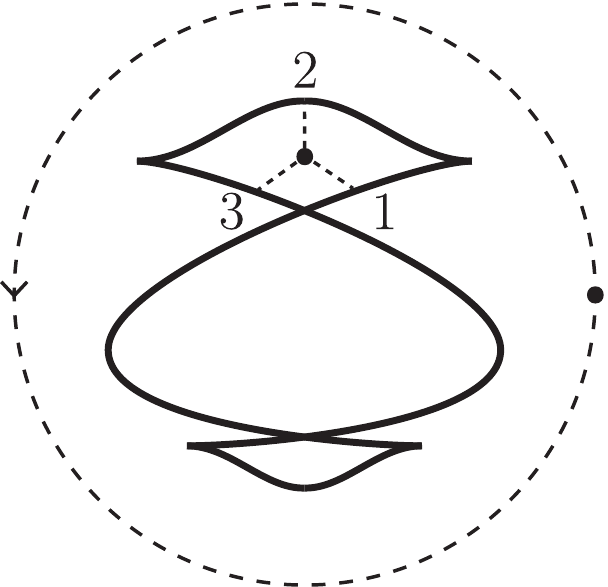}
\label{F:referencepaths_slip1}
}
\subfigure[]{\includegraphics[width=35mm]{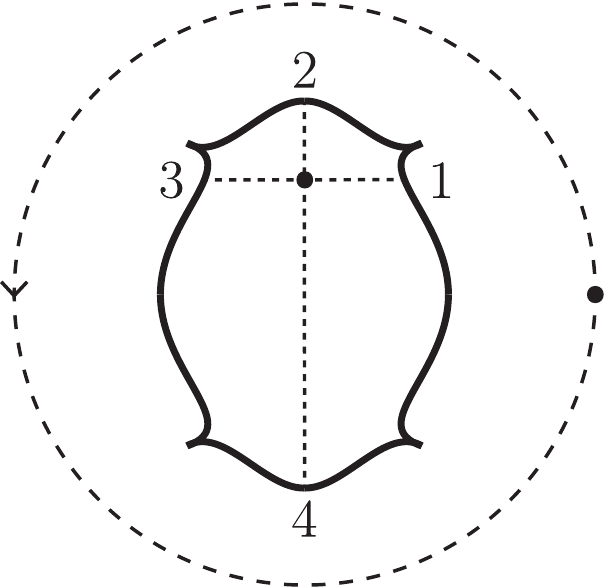}
\label{F:referencepaths_slip2}
}
\caption{Parts of critical value sets \subref{F:referencepaths_slip1} before and \subref{F:referencepaths_slip2} after an $\mathrm{R2}$--move applied to $f_4$. }
\label{F:referencepaths_slip}
\end{figure}
We denote by $\varphi_4\in \Mod(\Sigma)$ the monodromy along the dotted circle in Figure~\ref{F:referencepaths_slip}, which is equal to $\varphi_2\circ t_{c_1}^{-1}\circ \cdots \circ t_{c_k}^{-1}$. 
Let $e_i\subset \tilde{\Sigma}$ be a vanishing cycle associated with a reference path in Figure~\ref{F:referencepaths_slip2} labeled by $i$ ($i=1,2,3,4$). 
Although we can easily obtain $e_1,e_2,e_3$ (see \cite[Figure 6]{HayanoR2}), $e_4$ depends on $\varphi_4$: 

\begin{proposition}\label{T:variety R2move f_d}

Suppose that $g$ is greater than $2$. 
For any element $\psi \in \Ker(\Phi_{e_3})\cap \Mod(\tilde{\Sigma})(e_1)$ satisfying the condition $\Phi_{e_1}(\psi)=\varphi_4$, there exists an $\mathrm{R2}$--move applied to $f_4$ such that the resulting vanishing cycle $e_4$ is equal to $\psi(e_2)$. 

\end{proposition}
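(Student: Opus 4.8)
The plan is to adapt the proof of Proposition~\ref{T:variety R2move tildef}, which itself follows \cite[Theorem 3.9]{HayanoR2}, to the slip configuration of Figure~\ref{F:referencepaths_slip}. First I would analyze the geometry of the $\mathrm{R2}$--move applied to $f_4$: the move pushes two fold arcs across one another inside the upper triangle, the reference paths labeled $1,2,3$ avoid the swept region so that $e_1,e_2,e_3$ are independent of the move, while the path labeled $4$ runs through the new crossing, so that $e_4$ is obtained from the cycle $e_2$ by the parallel transport $\psi\in\Mod(\tilde{\Sigma})$ that the move inserts along that path. By the description of how $\mathrm{R2}$--moves act on parallel transport in \cite{HayanoR2}, the cycle $e_4$ is exactly $\psi(e_2)$; it therefore suffices to prove that the set of transports $\psi$ realizable by an $\mathrm{R2}$--move on $f_4$ is precisely $\{\psi\in\Mod(\tilde{\Sigma})(e_1)\cap\Ker(\Phi_{e_3})\mid \Phi_{e_1}(\psi)=\varphi_4\}$.

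Next I would read off the three defining conditions. That $\psi$ preserves $e_1$ and lies in $\Ker(\Phi_{e_3})$ expresses that the move leaves the folds carrying $e_1$ and $e_3$ in place (after surgery along $e_3$ the inserted transport collapses to a power of $t_{e_3}$, which is invisible), while $\Phi_{e_1}(\psi)=\varphi_4$ is the monodromy--consistency constraint: the monodromy along the dotted circle in Figure~\ref{F:referencepaths_slip} must remain equal to $\varphi_4$ after the move. The realizable set is thus a coset of $\Ker(\Phi_{e_1})\cap\Ker(\Phi_{e_3})$, and I would identify the variety of $\mathrm{R2}$--moves with a generating set of this subgroup exactly as in the Claim in the proof of Proposition~\ref{T:variety R2move tildef}: passing to the pointed surgery homomorphisms, the restriction of $\Phi_{e_1}^\ast\circ\Phi_{e_3}^\ast$ is an isomorphism onto the corresponding intersection of kernels of forgetting maps (as in \cite[Lemma 3.1]{HayanoR2}), and the generators have the form $t_{\tilde{\delta}(\eta)}\circ t_{e_1}^{-1}\circ t_{e_3}^{-1}$ indexed by based loops $\eta$ as in \cite[Section 3]{HayanoR2}. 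Since each such generator is realized by a single $\mathrm{R2}$--move, composing them realizes every $\psi$ in the prescribed coset and yields $e_4=\psi(e_2)$.

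The main obstacle, and the sole place where the hypothesis $g>2$ is used, is the injectivity of the pushing map that underlies the generator computation. As in Proposition~\ref{T:variety R2move tildef}, reducing the intersection of forgetting--map kernels to the kernel of the induced map on fundamental groups requires the pushing map $P_2$ into the mapping class group of the relevant connected component $\Sigma'$ of the surgered surface to be injective, which fails precisely when $\Sigma'$ is a sphere or a torus. Under $g>2$ the surgeries producing $\tilde{\Sigma}$ leave $\Sigma'$ of genus at least $2$, so injectivity holds; this is the same borderline phenomenon flagged in the remark after Proposition~\ref{T:variety R2move tildef}, which is why the statement must be separated from a small--genus analogue. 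Once injectivity is secured, the remaining verification is identical to \cite[Theorem 3.4]{HayanoR2}, and I would leave the routine part to the reader, just as the paper does for the companion propositions.
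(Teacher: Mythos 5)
The paper's entire proof here is a one-line citation: the statement \emph{is} the content of \cite[Theorem 4.1]{HayanoR2}, which is formulated precisely for the $\mathrm{R2}$--move occurring in a flip-and-slip. Your proposal instead re-derives the result by transplanting the proof of Proposition~\ref{T:variety R2move tildef}, and that transplant has a genuine gap: the curve configurations in the two propositions are different, so the kernel computation is \emph{not} ``exactly as in the Claim''. In Proposition~\ref{T:variety R2move tildef} the surgered surface $\Sigma_{c,d}$ has one component containing the three points $v_1,w_1,w_2$ and a second component containing $v_2$ alone, and the whole argument reduces to the pushing map of the single component $\Sigma'$. Here, by the paper's own description, $e_1\cup e_3$ separates $\tilde{\Sigma}$ into $\Sigma_h$ and $\Sigma_l$, so $\tilde{\Sigma}_{e_1,e_3}$ has two components, each containing exactly one point coming from the surgery along $e_1$ and one coming from the surgery along $e_3$. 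Consequently $\Ker(\Phi_{e_1})\cap\Ker(\Phi_{e_3})$ decomposes over the two components, the generating paths come in two families (one inside each component), and one needs injectivity of the pushing maps of \emph{both} capped-off components, whose genera are $g$ and $g-1$. That is the real role of the hypothesis $g>2$ (for $g=2$ the smaller component is a torus, for $g=1$ a sphere), and it is also why Proposition~\ref{T:variety R2move f_d smallg} needs a section meeting $\Sigma_l$ when $g=2$ but sections meeting both $\Sigma_h$ and $\Sigma_l$ when $g=1$. Your singular ``the relevant connected component $\Sigma'$ of genus at least $2$'' shows the Claim is being copied rather than adapted; the correct adaptation for this configuration is exactly what Section 4 of \cite{HayanoR2}, i.e.\ Theorem 4.1, packages.

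A second gap is in the realization step. You realize each generator by one $\mathrm{R2}$--move and then ``compose'', but a composition of $\mathrm{R2}$--moves is a sequence of moves, not a single move, whereas Proposition~\ref{T:variety R2move f_d} (in deliberate contrast with Proposition~\ref{T:variety R2move tildef}, which says ``there exist $\mathrm{R2}$--moves'') asserts that a \emph{single} move applied to $f_4$ produces $e_4=\psi(e_2)$; this is also what the construction uses, since the algorithm inserts exactly one $\mathrm{R2}$--move at this stage. To close this you would need that every element of the coset, not merely a generating set, is induced by one move --- for instance because every element of the relevant kernel is a point-push along a single (possibly non-simple) loop --- and nothing in your sketch supplies that; it is again supplied by \cite[Theorem 4.1]{HayanoR2}. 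In short, your overall skeleton (coset structure, path-pushing generators, genus bound for injectivity) is the right kind of argument, but carried out for the wrong configuration and proving realization only by sequences of moves; the paper avoids both issues by quoting the theorem tailored to this exact situation.
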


\noindent
Proposition \ref{T:variety R2move f_d} immediately follows from \cite[Theorem 4.1]{HayanoR2}. 

Again, in order to get $e_4$ when the genus of a fiber is small, we have to take sections of $f_4$ on the disk bounded by the dotted circle in Figure~\ref{F:referencepaths_slip}. 
The complement $\tilde{\Sigma}\setminus (e_1\cup e_3)$ has two connected components $\Sigma_h$ and $\Sigma_l$, where the genus of $\Sigma_h$ is one larger than that of $\Sigma_l$. 
We take a section which intersects with $\Sigma_l$ if $g=2$, while we take four sections $\sigma_l^1, \sigma_l^2,\sigma_l^3$ and $\sigma_h$ so that $\sigma_h$ (resp.~$\sigma_l^i$) intersects with $\Sigma_h$ (resp.~$\Sigma_l$) if $g=1$. 
Using the sections we can take a lift $\tilde{\varphi}_4$ of the monodromy $\varphi_4$, which is contained in $\Mod(\tilde{\Sigma};x)(e_1,e_3)$ if $g=2$, or in $\Mod(\tilde{\Sigma};x_1,x_2,x_3,x_4)(e_1,e_3)$ if $g=1$.

\begin{proposition}\label{T:variety R2move f_d smallg}

For any element $\tilde{\psi}$, which is an element in $\Ker(\Phi_{e_3})\cap \Mod(\tilde{\Sigma};x)(e_1)$ if $g=2$ or in $\Ker(\Phi_{e_3})\cap \Mod(\tilde{\Sigma};x_1,x_2,x_3,x_4)(e_1)$ if $g=1$, satisfying the condition $\Phi_{e_1}(\tilde{\psi})=\tilde{\varphi}_4$, there exists an $\mathrm{R2}$--move applied to $f_4$ such that the resulting vanishing cycle $e_4$ is equal to $\tilde{\psi}(e_2)$. 

\end{proposition}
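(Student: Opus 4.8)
The plan is to adapt the argument behind Proposition~\ref{T:variety R2move f_d} (equivalently \cite[Theorem 4.1]{HayanoR2}) to the pointed setting, carrying along the marked points supplied by the chosen sections. First I would recall the mechanism from \cite[Theorem 4.1]{HayanoR2}: applying an $\mathrm{R2}$--move to $f_4$ changes the vanishing cycle $e_4$ by a mapping class $\psi$ that lifts the monodromy $\varphi_4$ along $e_1$ and is constrained by $\psi\in \Ker(\Phi_{e_3})\cap \Mod(\tilde{\Sigma})(e_1)$ and $\Phi_{e_1}(\psi)=\varphi_4$, in such a way that $e_4 = \psi(e_2)$; moreover the parameter space of $\mathrm{R2}$--moves surjects onto the set of such admissible $\psi$. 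Since each $\mathrm{R2}$--move can be performed fiber-preservingly with respect to the sections, it induces a \emph{pointed} mapping class $\tilde{\psi}$ lifting $\tilde{\varphi}_4$, and the relation $e_4=\tilde{\psi}(e_2)$ continues to hold verbatim. The content of the proposition is therefore that every admissible pointed lift $\tilde{\psi}$ is realized by some $\mathrm{R2}$--move.

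As in the Claim used in the proof of Proposition~\ref{T:variety R2move tildef}, this realization statement reduces to identifying the kernel of $\Phi_{e_1}$ restricted to $\Ker(\Phi_{e_3})\cap\Mod(\tilde{\Sigma};x)(e_1)$ (or to $\Ker(\Phi_{e_3})\cap\Mod(\tilde{\Sigma};x_1,x_2,x_3,x_4)(e_1)$ when $g=1$) and showing that this kernel is generated exactly by the elements that $\mathrm{R2}$--moves can produce. Following \cite[Section 3]{HayanoR2}, after applying the surgery homomorphisms $\Phi_{e_1}^\ast$ and $\Phi_{e_3}^\ast$ the problem becomes a computation inside forgetful-map kernels, which in turn is governed by pushing maps from the fundamental groups of the two components $\Sigma_h$ and $\Sigma_l$ of $\tilde{\Sigma}\setminus(e_1\cup e_3)$, punctured at the centers of the surgery disks and at the marked points, into the corresponding pointed mapping class groups. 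The generators of the kernel are then indexed by these fundamental groups just as in the large-genus Claim, and each such generator is visibly realized by an $\mathrm{R2}$--move.

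The main obstacle — and the reason the sections are introduced in the first place — is the injectivity of these pushing maps when the fiber genus is small. In the unpointed setting this is precisely what forces the hypothesis $g>2$ in Proposition~\ref{T:variety R2move f_d}, since the pushing map $P\colon\pi_1(\Sigma',v)\to\Mod(\Sigma';v)$ fails to be injective when the relevant component has genus $1$ (compare the parenthetical remark in the proof of Proposition~\ref{T:variety R2move tildef}). The sections are chosen to remedy exactly this failure: for $g=2$ the single marked point $x$ lying on $\Sigma_l$ already makes the pushing map on the offending component injective, whereas for $g=1$ one marked point does not suffice, and the three marked points $x_1,x_2,x_3$ on $\Sigma_l$ together with $x_4$ on $\Sigma_h$ rigidify the configuration enough that the analogous pointed pushing maps become injective. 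With injectivity restored, the kernel identification goes through word for word as in \cite[Section 3]{HayanoR2}, and the surjectivity of the $\mathrm{R2}$--move parametrization onto the admissible pointed lifts $\tilde{\psi}$ follows exactly as in the proof of \cite[Theorem 4.1]{HayanoR2}. The remaining details, which parallel those references line by line, I would leave to the reader.
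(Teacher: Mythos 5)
Your proposal is correct and takes essentially the same approach as the paper: the paper's entire proof is the remark that the proposition follows ``as the author proved the theorems in \cite[Section 5]{HayanoR2}'', and what you sketch---running the kernel identification of the unpointed argument (\cite[Theorem 4.1]{HayanoR2}, Proposition~\ref{T:variety R2move f_d}) in the pointed mapping class groups supplied by the sections, with the marked points restoring the pushing-map injectivity and the contractibility of diffeomorphism-group components that fail on the genus-$0$ and genus-$1$ pieces of $\tilde{\Sigma}\setminus(e_1\cup e_3)$---is precisely the content of that citation. Your accounting of the sections (one point on $\Sigma_l$ for $g=2$; three on $\Sigma_l$ and one on $\Sigma_h$ for $g=1$) matches the paper's choices, so this is the paper's own argument spelled out rather than a different route.
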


\noindent
We can prove Proposition \ref{T:variety R2move f_d smallg} as the author proved the theorems in \cite[Section 5]{HayanoR2}. 

We can regard the vanishing cycles $c_1,\ldots,c_k$ as curves in $\tilde{\Sigma}$, and these are also vanishing cycles of Lefschetz singularities of $f_5$. 
It is also easy to obtain a vanishing cycle of a Lefschetz singularity appearing when applying unsink to obtain $f_5$. 
The simplified trisection $f_6$ can be obtained by applying wrinkles and pushing Lefschetz singularities across indefinite folds. 
Since we can easily understand how vanishing cycles are changed by these moves (for wrinkles, see \cite[Figure 8]{BehrenHayano}), we can eventually obtain a trisection diagram associated with the trisection $f_6$. 

\begin{example}\label{Ex:simplified trisection genus-1SBLF}

Here we will apply the algorithm above to genus--$1$ simplified broken Lefschetz fibrations without Lefschetz singularities. 
Such fibrations were first given in \cite{ADK_BLF} and then completely classified in \cite{BaykurKamada,Hayanogenus1SBLF}: a $4$--manifold $X$ admits a genus--$1$ simplified broken Lefschetz fibration without Lefschetz singularities if and only if $X$ is diffeomorphic to one of the manifolds $S^4, S^1\times S^3\sharp S^2\times S^2, S^1\times S^3\sharp \CP\sharp \CPb, L_n$ and $L_n'$, where $L_n$ and $L_n'$ ($n\geq 2$) are $4$--manifolds introduced in \cite{Pao}\footnote{In this paper, we assume that a simplified broken Lefschetz fibration has indefinite folds.}. 
For simplicity of the notations, we put $L_1=L_1' = S^4$, $L_0=S^1\times S^3\sharp S^2\times S^2$ and $L_0'=S^1\times S^3\sharp \CP\sharp \CPb$. 

Let $f:X\to S^2$ be a genus--$1$ simplified broken Lefschetz fibration without Lefschetz singularities and $f_1:X\to \R^2$ be a map appearing when applying the algorithm to $f$ (the critical value set of $f_1$ is shown in Figure~\ref{F:critv_genus1_1}).
Let $\nu(f(\Crit(f)))$ be a tubular neighborhood of $f(\Crit(f))\subset S^2$. 
Since the composition of the restriction $f|_{\nu(f(\Crit(f)))}$ and the natural projection $\nu(f(\Crit(f)))\to f(\Crit(f))$ is a trivial bundle, the composition of the restriction of $f_1$ on the preimage of the shaded annulus in Figure~\ref{F:critv_genus1_1} and a retraction to one of the boundary components of the annulus is also a trivial bundle. 
Thus we can take a section $\sigma$ of $f_1$ over the shaded annulus in Figure~\ref{F:critv_genus1_1} so that the resulting lifted monodromies (which are contained in the mapping class groups of pointed surfaces) along the two boundary components are both trivial. 
By proposition~\ref{T:variety R2move tildef small g} we can apply $\mathrm{R2}$--moves to $f_1$ so that the monodromy along the boundary of the shaded disk in Figure~\ref{F:critv_genus1_2} is trivial. 
\begin{figure}[htbp]
\subfigure[]{\includegraphics[width=29mm]{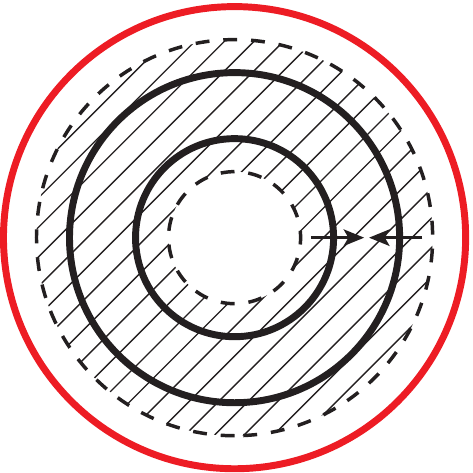}
\label{F:critv_genus1_1}}
\subfigure[]{\includegraphics[width=29mm]{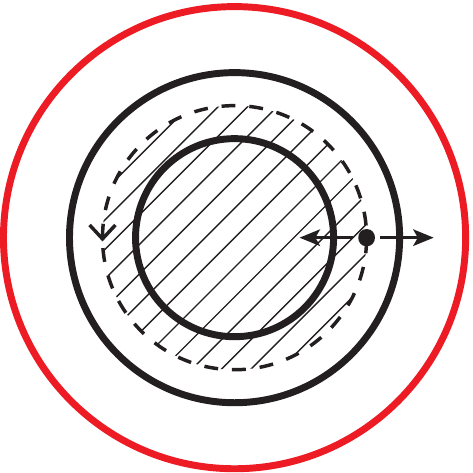}
\label{F:critv_genus1_2}}
\subfigure[]{\includegraphics[width=29mm]{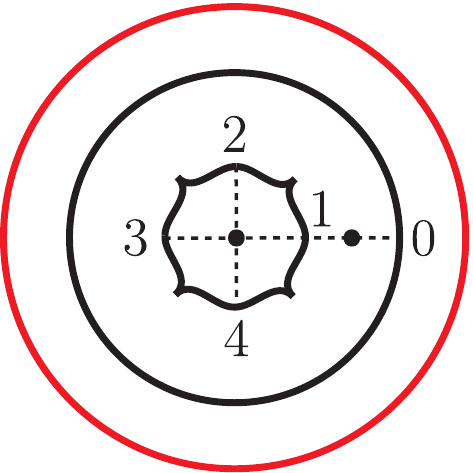}
\label{F:critv_genus1_3}}
\subfigure[]{\includegraphics[width=29mm]{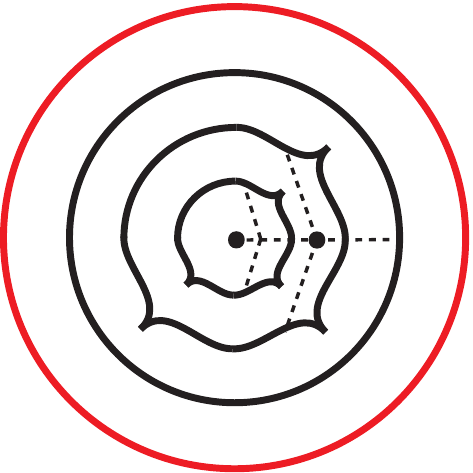}
\label{F:critv_genus1_4}}
\caption{Critical value sets of \subref{F:critv_genus1_1} $f_1$, \subref{F:critv_genus1_2} $f_2$, \subref{F:critv_genus1_3} $f_4$ and \subref{F:critv_genus1_4} $f_6$.}
\label{F:critv_genus1}
\end{figure}

There exists a genus--$1$ simplified broken Lefschetz fibration $f_0:L_0\to S^2$ such that a section $\sigma$ above can be taken so that it can be extended to the inside of the shaded annulus in Figure~\ref{F:critv_genus1_1}.
We take four sections $\sigma_1,\ldots,\sigma_4$ of $f_{0,2}$ over the shaded disk in Figure~\ref{F:critv_genus1_2} so that one of them intersects the torus component of the central fiber, while the others intersect the sphere component of it. 
We also take two regular values of $f_{0,4}$ as shown in Figure~\ref{F:critv_genus1_3}. 
We can obtain vanishing cycles $e_0,\ldots,e_4$ associated with the reference paths in Figure~\ref{F:critv_genus1_3}, together with points corresponding to sections as shown in Figures~\ref{F:loop torus} and \ref{F:loop genus2}. 
\begin{figure}[htbp]
\subfigure[]{\includegraphics[height=24mm]{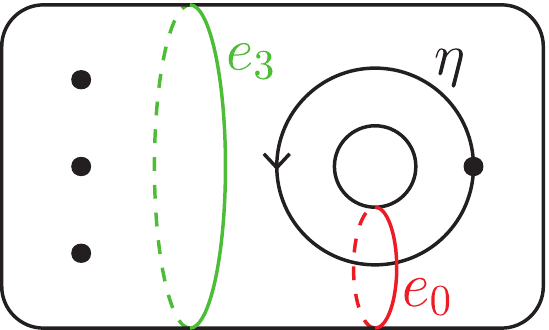}
\label{F:loop torus}}
\subfigure[]{\includegraphics[height=24mm]{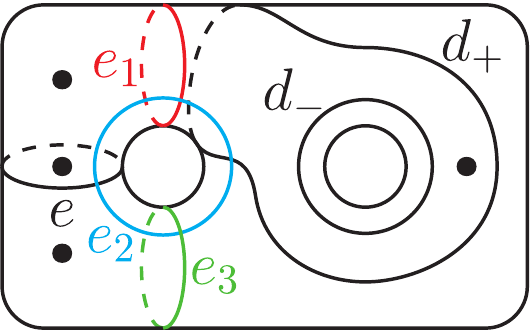}
\label{F:loop genus2}}
\caption{Regular fibers of $f_{0,2}$. 
Note that $e_4$ is equal to $e_2$ for $f_{0,2}$.}
\label{F:loops for genus3trisection}
\end{figure}

A map $f_{n,2}:L_n\to \R^2$ derived from a genus--$1$ simplified broken Lefschetz fibration $f_n:L_n\to S^2$ can be obtained from $f_{0,2}$ by applying multiplicity--$1$ logarithmic transformation along the torus component of the central fiber with the direction $\eta$ (which is described in Figure~\ref{F:loop torus}) and the auxiliary multiplicity $n$ (for the definition of the direction and the auxiliary multiplicity of a logarithmic transformation, see \cite[Seciton 8.3]{GompfStipsicz}).
Furthermore, we can obtain $f_{n,2}'$ derived from a genus--$1$ fibration $f_n':L_n'\to S^2$ from $f_n$ by applying Gluck twist along the sphere component of the central fiber. 
We can thus deduce from Proposition~\ref{T:variety R2move f_d smallg} that the vanishing cycle of $f_{n,4}$ associated with the reference path labeled by $4$ (see Figure~\ref{F:critv_genus1_3}) is $\psi_n(e_2)$, while that of $f_{n,4}'$ is $\psi'_n(e_2)$, where $\psi_n = t_{d_+}^nt_{d_-}^{-n}$, $\psi_n' = t_{d_+}^nt_{d_-}^{-n}t_e$ and $d_+,d_-,e,e_2$ are curves given in Figure~\ref{F:loop genus2}. 

Applying unsink to the cusp between the reference paths labeled by $2$ and $3$, and wrinkle to the resulting Lefschetz singularity, we can finally obtain a $(3,1)$--trisection whose critical value set is shown in Figure~\ref{F:critv_genus1_4}. 
Figure~\ref{F:system genus2} describes vanishing cycles of $f_{0,6}$ in a fiber of the point outside of the
innermost triangle in~Figure~\ref{F:critv_genus1_4}, where the central fiber can be obtained by applying surgery on the two red disks (we can obtain this diagram by embedding a diagram in \cite[Figure 8]{BehrenHayano} in a suitable way). 
Using the diagram in Figure~\ref{F:system genus2}, we can further obtain vanishing cycles $a_1,a_2,a_3,b_1,b_2,c_1,c_2$ of $f_{0,6}$ obtained by following the procedure in the beginning of Section~\ref{S:descriptionSTviaMCG}: $a_i, b_i$ and $c_i$ are the red, blue and green curves in Figures~\ref{F:system genus3} and \ref{F:system genus3_2} with label $i$, respectively.  
\begin{figure}[htbp]
\subfigure[]{\includegraphics[height=23mm]{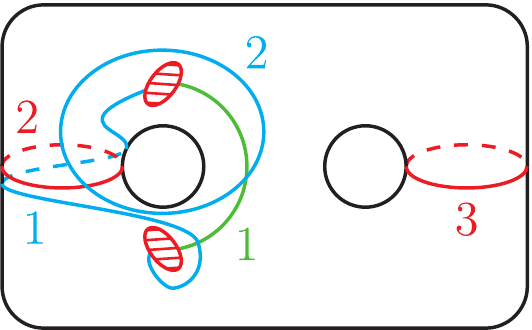}
\label{F:system genus2}}
\subfigure[]{\includegraphics[height=23mm]{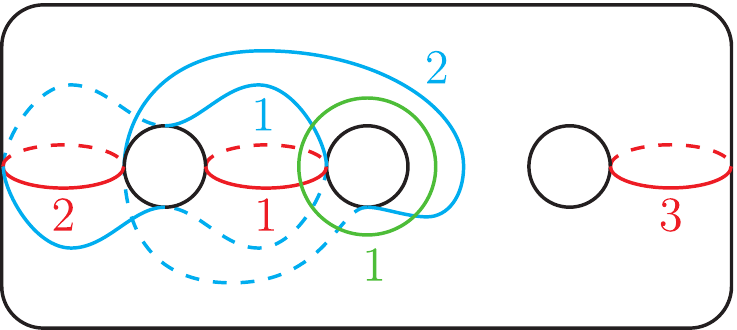}
\label{F:system genus3}}
\subfigure[]{\includegraphics[height=23mm]{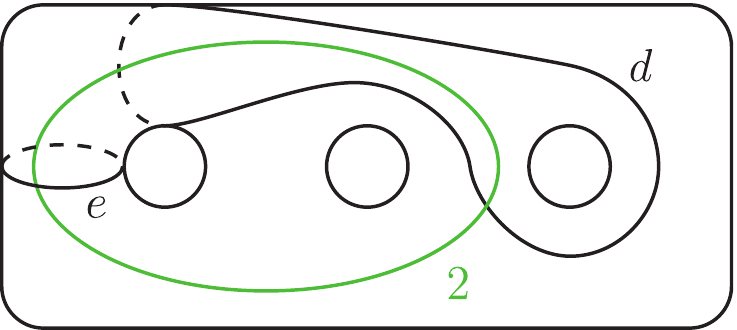}
\label{F:system genus3_2}}
\caption{Vanishing cycles of $f_{0,4}$.}
\label{F:system curves genus1}
\end{figure}
It is easy to see that the surgeries changing $f_0$ to $f_n'$ (that is, logarithmic transformations and Gluck twists) do not affect the curves $a_i$'s, $b_i$'s and $c_1$ (i.e.~curves in Figure~\ref{F:system genus3}). 
These surgeries change only $c_{0,2}=c_2$ to $c_{n,2} = t_{d}^n(c_{0,2})$ or $c_{n,2}' = t_{d}^nt_e(c_{0,2})$. 
Applying Proposition~\ref{T:relation vc diagram}, we can eventually obtain trisection diagrams $(\Sigma_3;\alpha_n,\beta_n,\gamma_n)$ and $(\Sigma_3;\alpha_n',\beta_n',\gamma_n')$ associated with $f_{n,6}$ and $f_{n,6}'$, respectively, where
\begin{itemize}

\item 
$(\alpha_n)_i=(\alpha_n')_i = a_i$ and $(\beta_n)_j=(\beta_n')_j=b_j$ for each $i=1,2,3$ and $j=1,2$ (note that $b_2$ is disjoint from $b_1$), 

\item 
$(\beta_n)_3=(\beta_n')_3 = a_3$, $(\gamma_n)_1=c_1$, $(\gamma_n)_2 = c_{n,2}$ and $(\gamma_n')_2=c_{n,2}'$,

\item 
$(\gamma_n)_3$ (resp.~$(\gamma_n')_3$) can be obtained from $a_3$ by applying handle-slides (over $a_1$ and $a_2$) so that the resulting curve is disjoint from $(\gamma_n)_1$ and $(\gamma_n)_2$ (resp.~$(\gamma_n')_1$ and $(\gamma_n')_2$). 

\end{itemize}
Some examples of such diagrams are shown in Figure~\ref{F:simplified trisection diagram}. 
We can put $(\gamma_0)_3=(\gamma_0')_3 = a_3$ since $c_{0,2}$ and $c_{0,2}'$ are disjoint from $a_3$, while we should slide $a_3$ over $a_2$ once to obtain $(\gamma_1)_3$ since $a_3$ intersects $(\gamma_1)_2 = c_{1,2}$. 
\begin{figure}[htbp]
\subfigure[$\gamma$-curves for $L_0\cong S^1\times S^3\sharp S^2\times S^2$.]{\includegraphics[height=22mm]{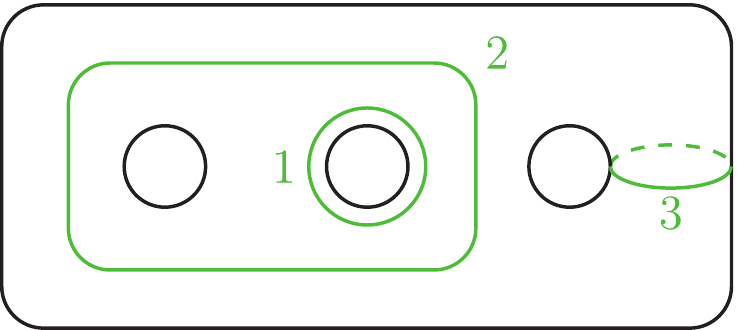}}
\subfigure[$\gamma$-curves for $L_0'\cong S^1\times S^3\sharp S^2\tilde{\times} S^2$.]{\includegraphics[height=22mm]{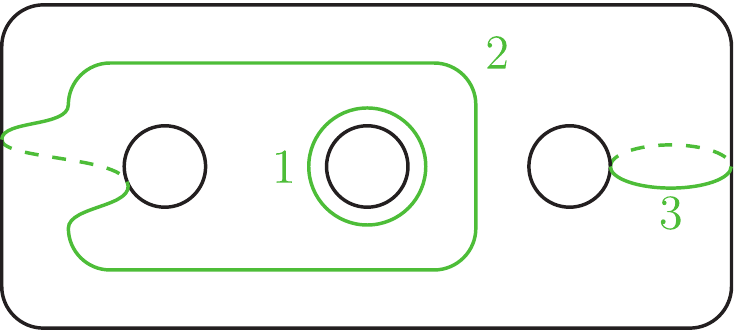}}
\subfigure[$\gamma$-curves for $L_1\cong S^4$.]{\includegraphics[height=22mm]{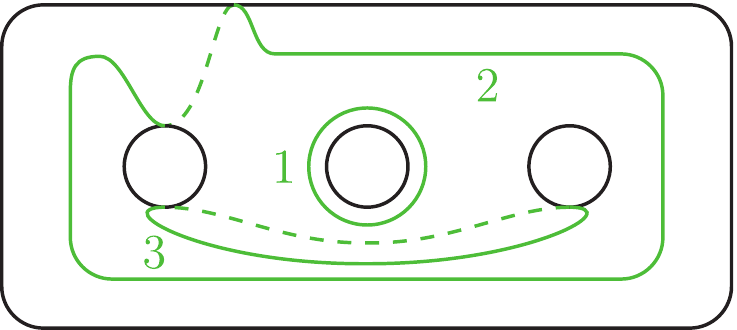}}
\caption{$\gamma$-curves of trisection diagrams associated with simplified trisections. }
\label{F:simplified trisection diagram}
\end{figure}

Using the diagram we can prove that the $(3,1)$--trisection of $S^4$ obtained from a genus--$1$ simplified broken Lefschetz fibration is diffeomorphic to the standard $(3,1)$--trisection of $S^4$ (i.e.~the stabilization of the $(0,0)$--trisection, whose diagram is given in \cite[Figure 2]{GKtrisection}) as follows. 
First, by applying a diffeomorphism of $\Sigma_3$ representing $t_{(\gamma_1)_1}t_{(\alpha_1)_1}t_{(\gamma_1)_1}$ to the trisection diagram of $S^4$ obtained above, we can obtain another (but equivalent) diagram, which is shown in Figure~\ref{F:diagramS4 modified1}. 
We then apply handle-slides as indicated by the dotted arrows in Figure~\ref{F:diagramS4 modified1}. 
The resulting diagram is shown in Figure~\ref{F:diagramS4 modified2}.
We further apply handle-slides to $\beta$--curves as indicated by the dotted arrows in Figure~\ref{F:diagramS4 modified2}. 
We eventually obtain the diagram in Figure~\ref{F:diagramS4 modified3}, which is obviously equivalent to the standard $(3,1)$--trisection diagram of $S^4$. 
\begin{figure}[htbp]
\subfigure[]{\includegraphics[height=21.7mm]{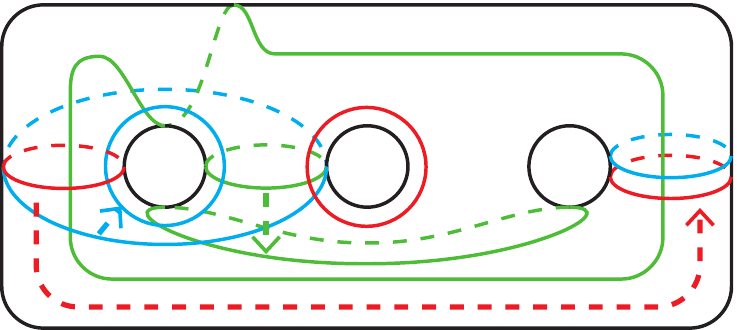}
\label{F:diagramS4 modified1}}
\subfigure[]{\includegraphics[height=21.7mm]{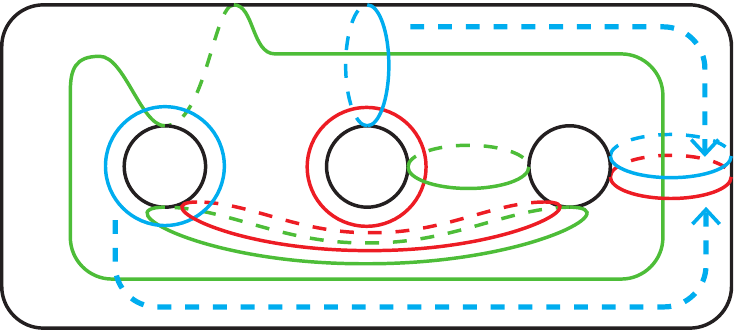}
\label{F:diagramS4 modified2}}
\subfigure[]{\includegraphics[height=21.7mm]{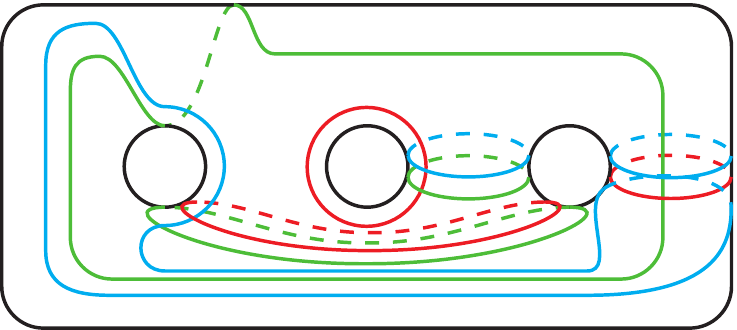}
\label{F:diagramS4 modified3}}
\caption{$(3,1)$--trisection diagrams of $S^4$.}
\label{F:standard diagram S4}
\end{figure}
Similarly, we can also prove that the $(3,1)$--trisections of $L_0$ and $L_0'$ obtained from genus--$1$ simplified broken Lefschetz fibrations are diffeomorphic to a connected sum of the $(1,1)$--trisection of $S^1\times S^3$ and the $(2,0)$--trisections of $S^2$--bundles over $S^2$. 

\end{example}

\noindent
{\bf Acknowledgments.}
The author would like to thank Refik \.{I}nan\c{c} Baykur for helpful comments on a draft of this manuscript.  
The author was supported by JSPS KAKENHI (Grant Numbers 26800027 and 17K14194).


\begin{thebibliography}{99}
%
\bibitem{ADK_BLF}
D.~Auroux, S.~K.~Donaldson and L.~Katzarkov, \emph{Singular Lefschetz pencils}, Geom.~Topol., \textbf{9}(2005), 1043--1114. 
 
\bibitem{BanyagaHurtubise}A.~Banyaga and D.~Hurtubise, {\it Lectures on Morse homology}, 
Kluwer Texts in the Mathematical Sciences, 29, 2004, x+324 pp.
 
\bibitem{Baykur_PJM} R.~\.{I}.~Baykur, {\it Topology of broken Lefschetz fibrations and near-symplectic four--manifolds,} Pacific J. Math. 240 (2009), no. 2, 201--230. 
%
\bibitem{BaykurHayanoBLF}
R.~\.{I}.~Baykur and K.~Hayano, \emph{Broken Lefschetz fibrations and mapping class groups}, Geom.~Topol.~Monogr., \textbf{19}(2015), 269--290. 

\bibitem{BaykurKamada}
R.~\.{I}.~Baykur and S.~Kamada, \emph{Classification of broken Lefschetz fibrations with small fiber genera}, J.~Math.~Soc.~Japan, \textbf{67}(2015), no.~3, 877--901.

\bibitem{BaykurSaeki} R.~\.{I}.~Baykur and O.~Saeki, {\it Simplifying indefinite fibrations on $4$--manifolds}, preprint, available at arXiv:1705.11169. 

\bibitem{BehrenHayano} S.~Behrens and K.~Hayano, {\it Elimination of cusps in dimension 4 and its applications}, Proc.~Lond.~Math.~Soc., (3) \textbf{113}(2016), 674--724. 
 
\bibitem{EarleSchatz} C.~J.~Earle and A.~Schatz, \emph{Teichm\"{u}ller theory for surfaces with boundary}, J.~Differential Geom., \textbf{4}(1970), 169--185.

\bibitem{GKtrisection}D.~Gay and R.~Kirby, {\it Trisecting $4$--manifolds}, Geom. Topol., \textbf{20}(2016), 3097--3132.
 
\bibitem{GompfStipsicz} R.~E.~Gompf and A.~I.~Stipsicz, {\it 4-Manifolds and Kirby Calculus}, Graduate Studies in Mathematics \textbf{20}, American Mathematical Society, 1999.
 
\bibitem{Hayanogenus1SBLF}
K.~Hayano, \emph{On genus--1 simplified broken Lefschetz fibrations}, Algebr.~Geom.~Topol., \textbf{11}(2011), no.~3, 1267--1322. 

\bibitem{HayanoR2} K.~Hayano, {\it Modification rule of monodromies in an $R_2$--move}, Algebr.~Geom.~Topol., \textbf{14}(2014), no.~4, 2181--2222. 
%

\bibitem{Meier} J.~Meier \emph{Trisections and spun $4$--manifolds}, preprint, available at arXiv:1708.01214. 

\bibitem{MZgenus2} J.~Meier and A.~Zupan, {\it Genus-two trisections are standard}, Geom.~Topol., \textbf{21}(2017), no.~3, 1583--1630. 

\bibitem{Pao}
P.~S.~Pao, \emph{The topological structure of 4–manifolds with effective torus actions. I}, Trans.~Amer.~Math.~Soc., \textbf{227}(1977), 279--317. 

\end{thebibliography}
\end{document}